\documentclass[11pt]{article}

\usepackage{amsmath,amssymb,amsthm,amsfonts,mathrsfs}
\usepackage[pdftex]{graphicx,epsfig,color}
\usepackage[T1]{fontenc}
\DeclareGraphicsRule{.pdftex}{pdf}{.pdftex}{}
\usepackage{tikz}
\usepackage{xcolor}
\usepackage{float}

\topmargin -0.5cm
\textheight 22cm
\oddsidemargin 1cm
\textwidth 14.5cm

\def\R{\mathbb R}

\def\N{\mathbb N}

%%% Proposed Notation

\def\<{\langle}
\def\>{\rangle}

\def\<{\langle}
\def\>{\rangle}

\numberwithin{equation}{section}

\newtheorem{theorem}{Theorem}

\newtheorem{lemma}[theorem]{Lemma}
\newtheorem{proposition}[theorem]{Proposition}
\newtheorem{corollary}[theorem]{Corollary}

\newtheorem{remark}[theorem]{Remark}

\numberwithin{theorem}{section}

%===================================================

% Title of the article

%===================================================

\title{Subdifferentials and Minimizing Sard Conjecture in Sub-Riemannian Geometry}

%%%%%%%%%%%%%%%%%%%%%%%%%%%%%%%%%%%%%%%%%%%%%%%%%%%%%%%%%%%%%%%%%%%

\author{L.~Rifford\thanks{Universit\'e C\^ote d'Azur, CNRS, Labo.\ J.-A.\ Dieudonn\'e,  UMR CNRS 7351, Parc Valrose, 06108 Nice Cedex 02, France ({\tt ludovic.rifford@math.cnrs.fr})}  
}

\date{}

%%%%%%%%%%%%%%%%%%%%%%%%%%%%%%%%%%%%%%%%%%%%%%%%%%%%%%%%%%%%%%%%%%%

\makeindex

\begin{document}

\maketitle

{\centering\footnotesize Dedicated to Professor Francis Clarke's 75th Birthday\par}

\begin{abstract}
We use techniques from nonsmooth analysis and geometric measure theory to provide new examples of complete sub-Riemannian structures satisfying the Minimizing Sard conjecture. In particular, we show that complete sub-Riemannian structures associated with distributions of co-rank $2$ or generic distributions of rank $\geq 2$ satisfy the Minimizing Sard conjecture.
\end{abstract}

%\tableofcontents

\section{Introduction}\label{sec:Intro}

Consider a smooth connected manifold $M$ of dimension $n\geq 3$ equipped with a sub-Riemannian structure $(\Delta,g)$ which consists of a totally nonholonomic smooth distribution $\Delta$ of rank $m<n$ and a smooth metric $g$ over $\Delta$. By the Chow-Rashevsky Theorem, such a structure makes $M$ horizontally connected, that is, for any $x,y \in M$ there is $\gamma:[0,1] \rightarrow M$, absolutely continuous with derivative in $\R^2$, called horizontal path, which joins $x$ to $y$ and satisfies 
\[
\dot{\gamma}(t) \in \Delta \left( \gamma(t)\right) \qquad \mbox{ for a.e. } t \in [0,1].
\]
Then, define the function $d_{SR}:M\times M \rightarrow \R$ by  
\[
d_{SR}(x,y) := \inf \Bigl\{ \mbox{length}^g(\gamma) \, \vert \, \gamma \in \Omega^{\Delta}_x, \, \gamma(1) = y\Bigr\} \qquad \forall (x,y) \in M \times M,
\]
where $\Omega^{\Delta}_x$ stands for the set of horizontal paths $\gamma:[0,1] \rightarrow M$ such that $\gamma(0)=x$. The function $d_{SR}$, called sub-Riemannian distance with respect to $(\Delta,g)$, makes $(M,d_{SR})$ a metric space which defines the same topology as the one of $M$ as a manifold, and furthermore, as in Riemannian geometry, thanks to  a sub-Riemannian version of Hopf-Rinow Theorem, the completeness of $(M,d_{SR})$ guarantees that for any pair $x,y\in M$ there is an horizontal path, called minimizing, which minimizes the sub-Riemannian distance between $x$ and $y$. We refer the reader to the monographs \cite{abb17,montgomery02,riffordbook} for further details on sub-Riemannian geometry and we assume from now that the metric space  $(M,d_{SR})$ is complete (the sub-Riemannian structure $(\Delta,g)$ is said to be complete).

Within the set of horizontal paths, the so-called singular minimizing horizontal paths play a significant role in sub-Riemannian geometry. Those are critical points of the End-Point mapping whose end-points may correspond to loss of regularity of the sub-Riemannian distance so a good understanding of the space filled by them is crucial. The Minimizing Sard Conjecture is precisely concerned with the size of the (closed) set of points, denoted by  $\mbox{Abn}^{min}(x)$,  that can be reached from a given point $x\in M$ through singular minimizing horizontal paths (see \cite[\S 10.2]{montgomery02}, \cite[Conjecture 1 p. 158]{rt05}, \cite[\S III.]{agrachev14} or \cite[\S 2.1]{br20}):\\

\noindent {\bf Minimizing Sard Conjecture.}  For every $x\in M$, $\mbox{Abn}^{min}(x)$ has Lebesgue measure zero in $M$.\\

Although various special cases of the Minimizing Sard Conjecture have been verified, the conjecture remains open in its full generality. The best result toward the conjecture, due to Agrachev \cite{agrachev09}, shows that $\mbox{Abn}^{min}(x)$  has always empty interior and to date, the Minimizing Sard Conjecture is known to hold true in the following cases:

\begin{itemize}
\item[(i)] The distribution $\Delta$ is {\it medium-fat}, that is, for every $x\in M$ and every smooth section $X$ of $\Delta$ with $X(x) \neq 0$, there holds
\begin{eqnarray*}
T_xM = \Delta(x)+ [\Delta,\Delta](x) + \bigl[X,[\Delta,\Delta] \bigr](x),
\end{eqnarray*}
where 
\[
[\Delta,\Delta] (x) := \mbox{Span} \Bigl\{ [Y,Z](x) \, \vert \,  Y, Z \mbox{ smooth sections of } \Delta \Bigr\}
\]
and
\[
 \bigl[X,[\Delta,\Delta] \bigr] (x) := \mbox{Span} \Bigl\{ \bigl[X,[Y,Z]\bigr] (x) \, \vert \,  Y, Z \mbox{ smooth sections of } \Delta \Bigr\}.
\]
The result follows from the lipschitzness properties of the sub-Riemannian distance obtained by Agrachev and Lee \cite{al09} which is itself a consequence of previous works on medium-fat distributions and Goh abnormals by Agrachev and Sarychev \cite{as99} (see also \cite{riffordbook}). It allows easily to obtain the conjecture for distributions which are medium fat almost everywhere such as  distributions of step $2$ almost everywhere ($\Delta(x)+[\Delta,\Delta](x)=T_xM$ for almost every $x\in M$) like for example co-rank $1$ distributions ($m=n-1$). 
\item[(ii)] The sub-Riemannian structure $(\Delta,g)$ has rank $m\geq 3$ and is generic in the set of sub-Riemannian structures of rank $m$ over $M$ endowed with the Whitney smooth topology. The result follows from the absence of Goh controls for generic distributions of rank $m\geq 3$ (as shown by Agrachev and Gauthier \cite[Theorem 8]{ag01} and Chitour, Jean and Trélat \cite[Corollary 2.5]{cjt06}) along with the fact that singular minimizing horizontal paths for generic sub-Riemannian structures are strictly abnormal (see \cite{cjt06}). 
\item[(iii)] The sub-Riemannian structure $(\Delta,g)$ over $M$ corresponds to a Carnot group of step $\leq 3$ (see \cite{lmopv16}) or of rank $2$ and step $4$ (see \cite{bv20}). The latter case verifies indeed the stronger Sard Conjecture. We refer the reader to  \cite{bnv22,lmopv16} for a few other specific examples of Carnot groups satisfying the Sard Conjecture and to \cite{agrachev14,br18,bfpr18,bpr22,bnv22,bv20,lmopv16,montgomery02,ov19,riffordbourbaki,rt05} for further details, results and discussions on that conjecture.
\end{itemize}

In this paper, we aim to provide new examples of complete sub-Riemannian structures satisfying the Minimizing Sard conjecture. In the spirit of a previous work by Trélat and the author \cite{rt05}, we are going to show that the sub-Riemannian structure $(\Delta,g)$ satisfies the Minimizing Sard conjecture whenever all pointed distances $d_{SR}(x,\cdot)$, with $x\in M$, are almost everywhere Lipschitz from below, and then in a second step, we shall give sufficient conditions for the latter property to hold true. Our results are as follows:\\

For every $x\in M$, we denote by $\mbox{Goh-Abn}^{min}(x)$ the set of $y\in M$ for which there is a minimizing horizontal path $\gamma \in \Omega^{\Delta}_x$ joining $x$ to $y$ which is singular and admits an abnormal lift $\psi:[0,1] \rightarrow \Delta^{\perp}$ satisfying the Goh condition
\begin{eqnarray}\label{Goh0}
\psi(t) \cdot [\Delta,\Delta](\gamma(t)) = 0 \qquad \forall t \in [0,1].
\end{eqnarray}
As shown by Agrachev and Sarychev (see \cite{as96,as99}), any singular minimizing horizontal path, which is not the projection of a normal extremal, does admit an abnormal lift satisfying the Goh condition (\ref{Goh0}) and moreover Agrachev and Lee \cite{al09} proved that the absence of abnormal lifts satisfying (\ref{Goh0}) along all minimizing paths from $x$ to $y$ guarantees that the pointed distance $d_{SR}(x,\cdot)$ is Lipschitz on a sufficiently smalll neighborhood of $y$. We refer the reader to Section \ref{SECGoh} for several comments and proofs regarding those results. By construction, $\mbox{Goh-Abn}^{min}(x)$ is a closed set satisfying 
\[
\mbox{Goh-Abn}^{min}(x) \subset \mbox{Abn}^{min}(x).
\]
We define the Goh-rank of an horizontal path $\gamma \in \Omega^{\Delta}_x$ , denoted by $\mbox{Goh-rank}(\gamma)$, as the dimension of the vector space of abnormal lifts  $\psi:[0,1] \rightarrow \Delta^{\perp}$ satisfying the Goh condition (\ref{Goh26sept}). Note that if $\gamma$ is not singular, then it admits no abnormal lifts so its Goh-rank has to be $0$. Our main result is the following:

\begin{theorem}\label{THM}
Let $M$ be a smooth manifold equipped with a complete sub-Riemannan structure $(\Delta,g)$ and $x\in M$ be fixed. If for almost every $y\in M$ all minimizing horizontal paths from $x$ to $y$ have Goh-rank at most $1$, then the closed set $\mbox{Abn}^{min}(x)$ has Lebesgue measure zero in $M$.
\end{theorem}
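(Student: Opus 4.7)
Following the two-step strategy of \cite{rt05} outlined in the introduction, my plan is first to show that under the hypothesis of the theorem the pointed distance $f_x := d_{SR}(x,\cdot)$ is locally Lipschitz outside a Lebesgue-null subset of $M$, and then to apply the nonsmooth Sard argument of \cite{rt05} to conclude that $\mbox{Abn}^{min}(x)$ has Lebesgue measure zero.

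The main step rests on the subdifferential dictionary presumably developed earlier in the paper: the horizontal limiting subdifferential $\partial^\infty f_x(y) \subset T_y^*M$ is a linear subspace whose dimension is controlled by the maximum Goh-rank among minimizing horizontal paths from $x$ to $y$. By the Agrachev-Lee theorem, $f_x$ is locally Lipschitz at $y$ precisely when $\partial^\infty f_x(y) = \{0\}$, i.e.\ when $y \notin \mbox{Goh-Abn}^{min}(x)$. The theorem's hypothesis then reads $\dim \partial^\infty f_x(y) \le 1$ almost everywhere, and it suffices to show that the closed set $\mbox{Goh-Abn}^{min}(x)$ has measure zero. For this, the dictionary equips $\mbox{Goh-Abn}^{min}(x)$ with a measurable $1$-dimensional line field $\ell(y) := \partial^\infty f_x(y)$, which I would integrate --- via Lusin-type approximation combined with the Peano-type integrability provided by the Goh characteristic equations --- into a countable union of rectifiable $1$-dimensional Goh characteristic curves. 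Since $\dim M \ge 3$, such a $1$-rectifiable set has zero $n$-dimensional Lebesgue measure.

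Having thus shown that $f_x$ is locally Lipschitz outside a null set, the conclusion follows by the Rifford-Trélat scheme: Rademacher's theorem gives a.e.\ differentiability of $f_x$ on its Lipschitz locus, and at each differentiability point $y$ the singleton structure of $\partial^- f_x(y)$ combined with the vanishing of $\partial^\infty f_x(y)$ and Agrachev-Sarychev excludes singular minimizers from reaching $y$ (up to a residual argument handling the locus where a singular minimizer also projects a normal extremal, which is treated by a separate Sard-type argument on initial covectors as in \cite{rt05}). Thus $\mbox{Abn}^{min}(x)$ is contained, up to a null set, in $\mbox{Goh-Abn}^{min}(x)$, and the theorem follows.

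The main obstacle I anticipate is the integrability step: converting the pointwise bound $\dim \partial^\infty f_x(y) \le 1$ on $\mbox{Goh-Abn}^{min}(x)$ into a concrete $1$-rectifiable covering set. The measurable line field $\ell(\cdot)$ need not be continuous, and extracting rectifiable integral curves requires a careful interplay of Lusin-type approximation, the Peano-type existence of Goh characteristic curves, and a covering argument to bound the total length. This is where the combined nonsmooth-analysis and geometric-measure-theory apparatus advertised in the abstract must play the decisive role.
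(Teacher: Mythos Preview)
Your proposal contains a genuine gap at the ``integrability step,'' and the gap is not merely technical but conceptual: a measurable $1$-dimensional line field on a set does not force that set to be $1$-rectifiable or to have Lebesgue measure zero. Any measurable set $E\subset\R^n$ of positive measure carries, for instance, the constant line field $\ell(y)=\R e_1$; nothing about the existence of a line bundle constrains the Hausdorff dimension of its base. The co-vector line $\partial^-_{PL}f_x(y)$ (what you call $\partial^\infty f_x(y)$) lives in $T_y^*M$ and records the \emph{direction in which $f_x$ fails to be Lipschitz}; it has no a priori relationship to a tangent direction of the set $\mbox{Goh-Abn}^{min}(x)$, so one cannot ``integrate'' it into curves covering that set. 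The Goh characteristic curves you mention are the singular minimizing geodesics themselves, but $\mbox{Goh-Abn}^{min}(x)$ is the set of their \emph{endpoints}, not a union of such curves.

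The paper's argument exploits the Goh-rank $\le 1$ hypothesis in the opposite way. The $1$-dimensional line $P^u\subset T_y^*M$ of Goh co-vectors determines a \emph{hyperplane} $V=(P^u)^\perp$, and a second-order openness theorem (Appendix~\ref{SECSecondOrder}) shows that a suitable localization $W$ of $f_x$ is Lipschitz from below \emph{along} affine copies of $V$. The remaining task is to propagate this to full Lipschitzness from below along the single transverse direction $P$, and this is where the crucial one-dimensional real-analysis result (Proposition~\ref{PROPDichotomy}: for a continuous function of one real variable, almost every point is either a differentiability point or a limit of local minima) enters, followed by a delicate argument using the Denjoy--Young--Saks theorem and a comparison lemma (Propositions~\ref{PROPSTEP3}--\ref{PROPSTEP5}). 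Once $f_x$ is shown to be Lipschitz from below almost everywhere, the conclusion follows from the characterization in Proposition~\ref{PROPcharac}. The paper explicitly notes that this one-dimensional alternative is the reason the method is limited to Goh-rank $\le 1$; your rectifiability heuristic, if it worked, would not have this limitation --- which is already a sign that it cannot be correct as stated.
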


Although the assumptions of Theorem \ref{THM} seem to be not easily checkable, they are automatically satisfied in a certain number of cases that we proceed to describe. 

We say that a sub-Riemannian structure $(\Delta,g)$ has {\it minimizing co-rank $1$ almost everywhere} if for every $x\in M$ and almost every $y\in M$, every singular minimizing horizontal path from $x$ to $y$ has co-rank $1$.  Of course, a minimizing horizontal path of co-rank $1$ cannot have Goh-rank strictly more than $1$. Therefore, we have: 

\begin{corollary}\label{COR1}
Let $M$ be a smooth manifold equipped with a complete sub-Riemannan structure $(\Delta,g)$ having minimizing co-rank $1$ almost everywhere. Then the Minimizing Sard Conjecture is satisfied.
\end{corollary}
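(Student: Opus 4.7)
The plan is to reduce Corollary \ref{COR1} directly to Theorem \ref{THM} by checking that the minimizing co-rank $1$ hypothesis forces the Goh-rank of every minimizing horizontal path from a fixed base point $x$ to a generic endpoint $y$ to be at most $1$.

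Fix an arbitrary $x \in M$. I would distinguish two cases for a minimizing horizontal path $\gamma \in \Omega^{\Delta}_x$ with $\gamma(1)=y$. First, if $\gamma$ is not singular, then $\gamma$ is not a critical point of the End-Point mapping at $x$, so it admits no nonzero abnormal lift $\psi:[0,1]\to \Delta^{\perp}$. As the paper notes, this forces $\mbox{Goh-rank}(\gamma)=0$. Second, if $\gamma$ is singular, then by the defining property of minimizing co-rank $1$ almost everywhere, there exists a full-measure subset $N_x \subset M$ such that for every $y \in N_x$ each singular minimizing horizontal path from $x$ to $y$ has co-rank $1$, i.e.\ the vector space of abnormal lifts $\psi:[0,1]\to \Delta^{\perp}$ of $\gamma$ is one-dimensional. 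The abnormal lifts of $\gamma$ that additionally satisfy the Goh condition (\ref{Goh0}) form a vector subspace of the space of all abnormal lifts of $\gamma$; in particular its dimension is bounded by $1$. Therefore $\mbox{Goh-rank}(\gamma)\leq 1$ in this case as well.

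Combining the two cases, for every $y \in N_x$, every minimizing horizontal path from $x$ to $y$ has Goh-rank at most $1$. The hypothesis of Theorem \ref{THM} is satisfied, and we conclude that $\mbox{Abn}^{min}(x)$ has Lebesgue measure zero in $M$. Since $x \in M$ was arbitrary, this is exactly the Minimizing Sard Conjecture for $(\Delta,g)$.

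I do not expect any genuine obstacle here; the substantive work has already been absorbed into Theorem \ref{THM}, and the only content of the corollary is the elementary linear-algebra observation that a subspace of a one-dimensional space has dimension at most $1$, together with the trivial remark that non-singular paths contribute Goh-rank $0$. The only mild care needed is the quantifier structure: the minimizing co-rank $1$ almost everywhere property is stated as a pointwise property for each base point and almost every endpoint, which matches exactly the pointwise-in-$x$, almost-everywhere-in-$y$ hypothesis required by Theorem \ref{THM}.
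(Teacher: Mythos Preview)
Your proposal is correct and matches the paper's approach exactly: the paper simply remarks that a minimizing horizontal path of co-rank $1$ cannot have Goh-rank strictly more than $1$ (since the Goh abnormal lifts form a subspace of all abnormal lifts), and then invokes Theorem~\ref{THM}. Your write-up is in fact more detailed than the paper's one-line justification.
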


We say that the distribution $\Delta$ is {\it pre-medium fat almost everywhere} if for almost every $x\in M$ and every smooth section $X$ of $\Delta$ with $X(x) \neq 0$, there holds
\begin{eqnarray}\label{premedium}
\dim \left( \Delta(x)+ [\Delta,\Delta](x) + \bigl[X,[\Delta,\Delta] \bigr](x)\right) \geq n-1.
\end{eqnarray}
This is for example the case of totally nonholonomic smooth distributions of rank $m\geq n-2$. We can check by taking one derivative in (\ref{Goh0}) that if $\gamma \in \Omega_{x}^{\Delta}$ is a minimizing horizontal path from $x$ to $y\neq x$ in $M$, then any abnormal lift $\psi$ satisfying the Goh condition of $\gamma$ must verify 
\[
\psi(t) \cdot \left( \Delta(\gamma(t)) + [\Delta,\Delta](\gamma(t)) + [X^t(\gamma(t)),[\Delta,\Delta]] \right) = 0 \qquad \mbox{for a.e. } t \in [0,1],
\]
where $X^t$ is a smooth section of $\Delta$ defined on a neighborhood of $\gamma(t)$ such that $X(\gamma(t)) = \dot{\gamma}(t)$.  Therefore, if $y$ is a point where (\ref{premedium}) is satisfied then for almost every $t\in [0,1]$ close to $1$ any abnormal lift of $\gamma$ satisfying (\ref{Goh0}) must annihilate a vector hyperplane, which forces $\mbox{Goh-rank}(\gamma)$ to be at most $1$. In conclusion, we have the following:

\begin{corollary}\label{COR2}
Let $M$ be a smooth manifold equipped with a complete sub-Riemannan structure $(\Delta,g)$ with $\Delta$ pre-medium-fat almost everywhere. Then the Minimizing Sard Conjecture is satisfied.
\end{corollary}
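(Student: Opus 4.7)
The plan is to reduce the statement to Theorem~\ref{THM}: it suffices to show that for almost every $y\in M$, every minimizing horizontal path from $x$ to $y$ has Goh-rank at most $1$.

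First I would verify that the pre-medium-fat locus
\[
G := \bigl\{ z \in M \, : \, (\ref{premedium}) \mbox{ holds at } z \bigr\}
\]
is open, so that full measure of $G$ forces full measure of its interior. Concretely, the space $\Delta(z)+[\Delta,\Delta](z)+[X,[\Delta,\Delta]](z)$ depends on $X$ only through $X(z)$ modulo a subspace contained in $\Delta(z)$; extending any $v \in \Delta(z_0)\setminus\{0\}$ to a smooth local section $X_v$ of $\Delta$ depending smoothly on $(z_0,v)$, lower semicontinuity of the dimension of a smoothly varying family of spans combined with compactness of the unit sphere of $\Delta(z_0)$ should yield openness of $G$.

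Now fix $y\in G$ with $y\neq x$, let $\gamma\in\Omega^{\Delta}_x$ be a minimizing horizontal path ending at $y$ (parametrized at constant speed so that $\dot\gamma(t)\neq 0$ for a.e.\ $t$), and let $\psi:[0,1]\to\Delta^\perp$ be any abnormal lift of $\gamma$ satisfying the Goh condition (\ref{Goh0}). By continuity of $\gamma$ and openness of $G$, there is $\varepsilon>0$ with $\gamma(t)\in G$ for all $t\in[1-\varepsilon,1]$. Differentiating $\psi(t)\cdot[Y,Z](\gamma(t))\equiv 0$ along $\gamma$ (using the adjoint equation for $\psi$, which identifies the derivative with $\psi(t)\cdot[\dot\gamma,[Y,Z]](\gamma(t))$) gives the identity
\[
\psi(t)\cdot\bigl(\Delta(\gamma(t))+[\Delta,\Delta](\gamma(t))+[X^t,[\Delta,\Delta]](\gamma(t))\bigr)=0
\]
for a.e.\ $t\in[1-\varepsilon,1]$, where $X^t$ is any local section of $\Delta$ extending $\dot\gamma(t)$. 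Combined with $\gamma(t)\in G$, this forces $\psi(t)$ to annihilate a subspace of $T_{\gamma(t)}M$ of dimension at least $n-1$, and hence to lie in a line of $T^*_{\gamma(t)}M$, for a.e.\ $t$ close to $1$.

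Finally, the vector space $V_\gamma$ of abnormal lifts of $\gamma$ satisfying (\ref{Goh0}) consists of solutions of a linear adjoint ODE along $\gamma$, so by uniqueness the evaluation map $\psi\mapsto\psi(t_0)$ is linear and injective on $V_\gamma$ for every $t_0$. Picking $t_0\in[1-\varepsilon,1]$ at which the line constraint above holds gives $\dim V_\gamma\leq 1$, i.e.\ $\mbox{Goh-rank}(\gamma)\leq 1$, and Theorem~\ref{THM} concludes. The main obstacle I anticipate is the openness of $G$: the pre-medium-fat condition involves a universal quantifier over all nonvanishing local sections $X$, and stability under displacement of the base point requires handling simultaneously the perturbations of $z$ and of the direction $X(z) \in \Delta(z)\setminus\{0\}$.
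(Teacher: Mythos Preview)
Your proposal is correct and follows essentially the same route as the paper's own argument, which is sketched in the paragraph immediately preceding Corollary~\ref{COR2}: differentiate the Goh condition along $\gamma$ to obtain the additional annihilation of $[X^t,[\Delta,\Delta]](\gamma(t))$, then use the pre-medium-fat inequality near $t=1$ to force each Goh lift into a line at some $t_0$, and conclude via the injectivity of the evaluation map $\psi\mapsto\psi(t_0)$. You are in fact more careful than the paper on one point: the paper tacitly uses that the pre-medium-fat locus $G$ is open (so that $\gamma(t)\in G$ for $t$ close to $1$), whereas you single this out as the main technical step and sketch the right proof via lower semicontinuity of the rank of a smooth family of spanning vectors together with compactness of the unit sphere in $\Delta(z)$.
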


We finish with a corollary which actually follows from Corollary \ref{COR1}. As shown by Chitour, Jean and Trélat (see \cite[Theorem 2.4]{cjt06}), there is an dense open set $O_m$ in the set $\mathcal{D}_m$ (with $m\geq 2)$ of smooth totally nonholonomic distributions of rank $m$ endowed with the Whitney $C^{\infty}$ topology such that for every $\Delta \in O_m$, every nontrivial singular horizontal path (w.r.t. $\Delta$) has co-rank $1$. Thus, for any $\Delta \in O_m$ and any smooth metric $g$ over $\Delta$ with $(\Delta,g)$ complete, the sub-Riemannian structure  $(\Delta,g)$ has co-rank $1$ almost everywhere. In conclusion, we have:

\begin{corollary}\label{COR3}
Let $M$ be a smooth manifold equipped with a complete sub-Riemannan structure $(\Delta,g)$ of rank $m\geq 2$ where $\Delta$ is generic in the set of smooth totally nonholonomic distrubutions of rank $m$ over $M$. Then the Minimizing Sard Conjecture is satisfied.
\end{corollary}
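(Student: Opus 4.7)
The plan is to deduce Corollary \ref{COR3} directly from Corollary \ref{COR1} by invoking the genericity result of Chitour, Jean and Tr\'elat quoted in the paragraph preceding the statement. Fix $m \geq 2$ and let $O_m \subset \mathcal{D}_m$ be the dense open subset furnished by \cite[Theorem 2.4]{cjt06}, characterized by the property that for every $\Delta \in O_m$ every nontrivial singular horizontal path (with respect to $\Delta$) has co-rank exactly $1$. Interpreting ``generic'' as membership in a dense open (and in particular residual) subset of $\mathcal{D}_m$ endowed with the Whitney $C^{\infty}$ topology, we may assume from the outset that $\Delta \in O_m$.

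Granted this, it suffices to check that $(\Delta,g)$ has minimizing co-rank $1$ almost everywhere, in the sense of the definition preceding Corollary \ref{COR1}. So fix $x \in M$ and let $y \in M \setminus \{x\}$; suppose $\gamma \in \Omega_x^{\Delta}$ is a singular minimizing horizontal path joining $x$ to $y$. Since $y \neq x$, the path $\gamma$ is nontrivial, and therefore the choice of $O_m$ forces $\gamma$ to have co-rank equal to $1$. This in fact yields a stronger conclusion than what is needed: for every $x \in M$ and every $y \in M \setminus \{x\}$ (not merely for a.e. $y$), every singular minimizing horizontal path from $x$ to $y$ has co-rank $1$. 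The hypothesis of Corollary \ref{COR1} is thereby satisfied, and a direct application gives that $\mbox{Abn}^{min}(x)$ has Lebesgue measure zero in $M$ for every $x\in M$, which is the Minimizing Sard Conjecture.

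There is essentially no technical obstacle in this last step, since the substantive work is packaged on one side in the Chitour--Jean--Tr\'elat generic co-rank theorem and on the other in Theorem \ref{THM} (via Corollary \ref{COR1}). The only minor points worth recording are: first, the open set $O_m$ depends on the rank $m$ but not on the metric $g$, so the argument applies uniformly to every smooth complete $(\Delta,g)$ with $\Delta \in O_m$; and second, the reduction ``singular minimizing $\Rightarrow$ nontrivial singular'' is trivial for $y\neq x$, so the Chitour--Jean--Tr\'elat conclusion transfers with no loss. Consequently, the entire proof is a two-line concatenation of the two cited ingredients.
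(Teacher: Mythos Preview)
Your proof is correct and follows exactly the approach of the paper: invoke the Chitour--Jean--Tr\'elat genericity result \cite[Theorem 2.4]{cjt06} to obtain that every nontrivial singular horizontal path has co-rank $1$, observe that this makes $(\Delta,g)$ have minimizing co-rank $1$ (indeed for every $y\neq x$, not just almost every $y$), and then apply Corollary~\ref{COR1}. There is nothing to add.
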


The proof of Theorem \ref{THM} consists in proving that for every $x\in M$, the pointed distance $d_{SR}(x,\cdot)$ admits at almost every $y\in M$ a support function from below which is Lipschitz. Roughly speaking, this result follows on the one hand from the fact that the assumption on Goh-ranks of minimizing geodesics provides  some lipschitzness property along an hypersurface $\mathcal{S}$ near $y$ while one the other hand the continuous function in one dimension given by the restriction of $d_{SR}(x,\cdot)$ to a curve transverse to $\mathcal{S}$ is at almost every point either differentiable or limit of many oscillations (see Section \ref{SECAlter}). The key result in the proof is this alternative satisfied almost everywhere by continuous functions in one dimension between differentiability and limits of points where the function reaches  local minima (see Proposition \ref{PROPDichotomy}). Since such an alternative is probably not available in higher dimension, our approach does not allow to treat the case of distributions with minimizing horizontal paths of Goh-rank $\geq 2$. \\

The paper is organized as follows: In Section \ref{SECSubLip}, we recall several notions of subdifferentials and prove several results of importance for the rest of the paper regarding Lipschitz-type properties of continuous functions. We explain in Section \ref{SEC3} how some properties satisfied by those subdifferentials interplay with the Minimizing Sard Conjecture; in particular, we provide in Proposition \ref{PROPcharac} various characterizations of the conjecture. Then, Section \ref{SECProofTHM} is devoted to the proof of Theorem \ref{THM} and we comment on our results in Section \ref{SECComments}. Finally, Appendix \ref{SECSecondOrder} contains a reminder on second order conditions for openness.\\

\noindent {\bf Acknowledgement.} The author is indebted to Aris Daniilidis and Alex Ioffe for fruitful discussions.

\section{Subdifferentials and lipschitzness}\label{SECSubLip}

Throughout this section, we consider a function $f:\mathcal{O} \rightarrow \R$ defined on an open set $\mathcal{O}\subset M$ and we suppose that $f$ is continuous on $\mathcal{O}$. We recall that $f$ is said to admit a {\it support function from below} $\varphi$ at some point $x\in \mathcal{O}$ if $\varphi:\mathcal{V} \rightarrow \R$ is defined on an open neighborhood $\mathcal{V}\subset \mathcal{O}$ of $x$ and satisfies 
\[
f(x) = \varphi(x)  \quad \mbox{and} \quad f(y) \geq \varphi(y) \quad \forall y \in \mathcal{V}.
\]  
We gather in the following sections several notions and results of non-smooth analysis that may be found  in \cite{clsw98} or \cite{rw98} in the Euclidean setting, we provide all proofs for sake of completeness. 

\subsection{Viscosity and proximal subdifferentials}

The {\it viscosity} (or {\it Fréchet}) {\it subdifferential} of $f$ at $x\in \mathcal{O}$, denoted by $\partial^-f(x)$, is defined as the set of $p\in T_x^*M$ for which $f$ admits a support function from below $\varphi$ at $x$ of class $C^1$ satisfying $d\varphi(x)=p$. Note that if $f$ admits a support function from below which is differentiable at $x$ then it admits a support function from below at $x$ which is of class $C^1$. Thus, we check easily that if $f$ is differentiable at $x$ then we have $\partial^-f(x)=\{df(x)\}$ and furthermore we observe that if $f$ attains a local minimum at $x$ then we have $0\in \partial^-f(x)$. The set $\partial^-f(x)$ is always a convex subset of $T_x^*M$ but it may be empty, as shown by the example of $f(x)=-|x|$ on the real line with $x=0$. Nonetheless, we have the following result:

\begin{proposition}\label{PROPnonemptydense}
The set of $x\in \mathcal{O}$ such that $\partial^-f(x)\neq \emptyset$ is a dense subset of $\mathcal{O}$. 
\end{proposition}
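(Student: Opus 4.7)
The plan is to prove density by showing that every nonempty open subset $U \subset \mathcal{O}$ contains a point $\bar{x}$ with $\partial^- f(\bar{x}) \neq \emptyset$. The underlying idea is the classical variational one: perturb $f$ by a smooth barrier that forces a minimizer to exist in the interior of a small ball, and then convert ``being a local minimum of $f + \chi$'' into ``$-d\chi$ is a viscosity subgradient of $f$''.

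First I would fix $x_0 \in U$ and a coordinate chart centered at $x_0$ in which a closed Euclidean ball $\bar{B}(0,r)$ is contained in $U$. In this chart I would define a smooth barrier $\chi : B(0,r) \to \R$ with $\chi(0) = 0$ and $\chi(x) \to +\infty$ as $|x| \to r$, the standard choice being $\chi(x) = |x|^2/(r^2 - |x|^2)$. Then I would consider $g := f + \chi$ on $B(0,r)$. Since $f$ is continuous on $\bar{B}(0,r)$, hence bounded, and since $\chi$ diverges to $+\infty$ at $\partial B(0,r)$, the function $g$ is bounded below and any minimizing sequence must stay inside a compact subset of $B(0,r)$; extracting a convergent subsequence produces $\bar{x} \in B(0,r)$ with $g(\bar{x}) = \min_{B(0,r)} g$.

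Next I would convert this minimum property into a viscosity support from below for $f$ itself. Set
\[
\varphi(y) := g(\bar{x}) - \chi(y) = f(\bar{x}) + \chi(\bar{x}) - \chi(y),
\]
which is of class $C^1$ on $B(0,r)$ and satisfies $\varphi(\bar{x}) = f(\bar{x})$. For any $y$ in $B(0,r)$,
\[
\varphi(y) = g(\bar{x}) - \chi(y) \leq g(y) - \chi(y) = f(y),
\]
so $\varphi$ is a $C^1$ support function from below of $f$ at $\bar{x}$. Consequently $d\varphi(\bar{x}) = -d\chi(\bar{x}) \in \partial^- f(\bar{x})$, so $\partial^-f(\bar{x})$ is nonempty, and $\bar{x}$ lies in $U$ by construction.

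The only mildly delicate point is the existence of the minimizer of $g$; this uses nothing beyond the continuity of $f$ (to keep $g$ bounded near the boundary ``from the right side'') plus the boundary blow-up of $\chi$. Once that is in place, the algebraic manipulation turning a minimum of $f + \chi$ into a smooth minorant of $f$ is immediate. I do not see a real obstacle here; the proof is a textbook application of the variational principle in its most elementary form.
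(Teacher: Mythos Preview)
Your proof is correct and follows essentially the same approach as the paper's own proof: add a smooth barrier that blows up at the boundary of a small neighborhood, take a minimizer of $f$ plus the barrier, and observe that the negative barrier (shifted by a constant) is a $C^1$ support function from below at that minimizer. The only differences are cosmetic---you work explicitly in a chart and give a concrete formula for the barrier, whereas the paper invokes an abstract smooth $\beta$ on a relatively compact open subset.
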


\begin{proof}[Proof of Proposition \ref{PROPnonemptydense}]
For every open set $\mathcal{O}' \subset \mathcal{O}$ with $\overline{\mathcal{O}'}\subset \mathcal{O}$, we can construct a smooth function $\beta:\mathcal{O}' \rightarrow [0,+\infty)$ which tends to $+\infty$ when approaching the boundary of $\mathcal{O}'$, so that $f+\beta$ attains its minimum at some point $x\in \mathcal{O}'$ where $\varphi := -\beta + f(x)+\beta(x)$ satisfies 
\[
f(x) = \varphi(x)  \quad \mbox{and} \quad f(y) \geq \varphi(y) \quad \forall y \in \mathcal{O}',
\]  
which shows that $d\varphi(x)=-d\beta(x)$ belongs to $\partial^-f(x)$.
\end{proof}

Note that this result is sharp, we cannot expect in general more than nonemptyness of the viscosity subdifferential for a dense set of points. Examples of functions in one variable having nonempty viscosity subdifferentials only for a countable set of points may be given by Weierstrass or Van der Waerden functions, see \cite{gs11}.\\

The {\it proximal subdifferential} of $f$ at $x\in \mathcal{O}$, denoted by $\partial^-_P f(x)$, is defined as the set of $p\in T_x^*M$ for which $f$ admits a support function from below $\varphi$ at $x$ of class $C^{2}$ on its domain and satisfying $d\varphi(x)=p$. Note that if $f$ admits a support function from below which is of class $C^2$ then it admits a support function from below at $x$ which is of class $C^{\infty}$. The set $\partial_P^-f(x)$ is a convex subset of $T_x^*M$ that may be empty and which is contained in $\partial^-f(x)$. Note that the inclusion $\partial_P^-f(x)\subset \partial^-f(x)$ may be strict as shown by the example $f(x)=-|x|^{3/2}$ at $x=0$ on the real line. The proof of Proposition \ref{PROPnonemptydense} shows that the set of $x\in \mathcal{O}$ such that $\partial_P^-f(x)\neq \emptyset$ is a dense subset of $\mathcal{O}$. In fact, we have much more than this, we can show that elements of $\partial^-f$ can be approximated by elements of $\partial^-_Pf$. More precisely, we have:

\begin{proposition}\label{PROPidem}
For every $x\in \mathcal{O}$, every $p\in \partial^-f(x)$ and every neighborhood $\mathcal{W}$ of $(x,p)$ in $T^*M$, there is $y \in \mathcal{O}$ and $q\in \partial^-_P f(y)$ such that $(y,q)\in \mathcal{W}$. 
\end{proposition}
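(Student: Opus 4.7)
The plan is to take the $C^1$ support $\varphi$ of $f$ at $x$ promised by $p\in\partial^-f(x)$, smooth it to a $C^\infty$ function $\tilde\varphi$ that is close to $\varphi$ in the $C^1$ topology on a small ball, then add a quadratic penalty so that $f-\tilde\varphi$ plus the penalty attains its infimum at an interior point $y$ of the ball. That interior minimizer automatically gives a proximal support from below of $f$ at $y$, with differential close to $p$, after subtracting the quadratic.

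To set things up, fix a local chart centered at $x$ so that we may identify a neighborhood of $x$ with an open subset of $\R^n$ equipped with the Euclidean norm $\|\cdot\|$. Without loss of generality, $\mathcal{W}=B_\rho(x)\times B_\sigma(p)$ for some $\rho,\sigma>0$. Let $\varphi$ be a $C^1$ support of $f$ from below at $x$ with $d\varphi(x)=p$, defined on $B_{r_0}(x)\subset\mathcal{O}$. Using the continuity of $d\varphi$ at $x$, choose $r\in(0,\min(r_0,\rho))$ so that $\|d\varphi(z)-p\|<\sigma/4$ for every $z\in \overline{B_r(x)}$, then pick $c>0$ with $2cr<\sigma/4$. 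By mollifying $\varphi$ on a slightly larger ball, one obtains a $C^\infty$ function $\tilde\varphi$ satisfying
\[
\|\varphi-\tilde\varphi\|_{C^0(\overline{B_r(x)})} < \frac{cr^2}{4}
\quad\text{and}\quad
\|d\varphi-d\tilde\varphi\|_{C^0(\overline{B_r(x)})} < \frac{\sigma}{4}.
\]

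Now consider the continuous function $G:\overline{B_r(x)}\rightarrow \R$ defined by
\[
G(z) := f(z) - \tilde\varphi(z) + c\|z-x\|^2.
\]
At the center, $G(x)=\varphi(x)-\tilde\varphi(x)$, so $|G(x)|\leq cr^2/4$, while for any $z$ on the boundary $\partial B_r(x)$ one has
\[
G(z) = \bigl(f(z)-\varphi(z)\bigr) + \bigl(\varphi(z)-\tilde\varphi(z)\bigr) + cr^2 \geq 0 - \frac{cr^2}{4} + cr^2 = \frac{3cr^2}{4}.
\]
Since $G(x)<\min_{\partial B_r(x)} G$, the continuous function $G$ attains its minimum on $\overline{B_r(x)}$ at some interior point $y\in B_r(x)$. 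Setting $\chi(z):=\tilde\varphi(z)-c\|z-x\|^2 + G(y)$ yields a $C^\infty$ function defined in a neighborhood of $y$ with $\chi(y)=f(y)$ and $\chi(z)\leq f(z)$ near $y$, so $\chi$ is a $C^2$ support from below of $f$ at $y$, giving $q:=d\chi(y)=d\tilde\varphi(y)-2c(y-x)\in\partial^-_P f(y)$.

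It remains to check $(y,q)\in\mathcal{W}$. Clearly $y\in B_r(x)\subset B_\rho(x)$, and
\[
\|q-p\| \leq \|d\tilde\varphi(y)-d\varphi(y)\| + \|d\varphi(y)-p\| + 2c\|y-x\| < \frac{\sigma}{4} + \frac{\sigma}{4} + 2cr < \sigma,
\]
which concludes the argument. The delicate point is really the balancing of constants at the start: one has to fix the radius $r$ using the modulus of continuity of $d\varphi$ before fixing the coefficient $c$ of the quadratic penalty, and only then choose the quality of the $C^1$ approximation $\tilde\varphi$ so that the perturbed function $G$ is still forced to achieve its minimum strictly inside $B_r(x)$; once this ordering is in place, the rest is just bookkeeping.
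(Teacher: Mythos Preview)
Your argument is correct, and it is genuinely different from the paper's route. The paper subtracts the linear function $p\cdot y$ from $f$, observes that the lower Dini derivative of the resulting function at $x$ is nonnegative in every direction, and then invokes Subbotin's Theorem (stated as a black-box Lemma~\ref{LEMSubbotin}) to locate a nearby proximal subgradient of small norm, which shifts back to a proximal subgradient of $f$ close to $p$. Your approach instead exploits directly the $C^1$ support $\varphi$ furnished by $p\in\partial^-f(x)$: you approximate $\varphi$ in $C^1$ by a smooth $\tilde\varphi$ via mollification, add a small quadratic penalty to force the minimum of $f-\tilde\varphi+c\|\cdot-x\|^2$ to fall strictly inside the ball, and read off the proximal subgradient at that interior minimizer. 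The trade-off is clear: the paper's proof is shorter to write but relies on the nontrivial Subbotin result, whereas your argument is entirely self-contained and elementary, using only standard mollification and a compactness/minimization step. Your careful ordering of constants (first $r$ from the modulus of continuity of $d\varphi$, then $c$, then the mollification accuracy) is exactly what makes the boundary comparison $G(x)<\min_{\partial B_r(x)}G$ go through, and the final $\|q-p\|$ estimate is clean.
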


We do not give the full proof of Proposition \ref{PROPidem}, we are just going to show how to deduce the result from a theorem by Subbotin \cite{subbotin95}. We follow the proof given by Clarke, Ledyaev, Stern and Wolenski in \cite[Proposition 4.5 p. 138]{clsw98}.

\begin{proof}[Proof of Proposition \ref{PROPidem}]

The Subbotin Theorem reads as follows (its proof can be found in  \cite[Theorem 4.2 p. 137]{clsw98}).
\begin{lemma}\label{LEMSubbotin}
Let $h:O \rightarrow \R$ be a continuous function on an open set $O\subset \R^n$, $x\in \R^n$, and let $\rho \in \R$ be such that
\[
Dh(x;v) := \lim_{\stackrel{w\rightarrow v}{t\downarrow 0}} \frac{h(x+tw)-h(x)}{t} > \rho \qquad \forall v \in \bar{B}_1,
\]
where $\bar{B}_1$ stands for the closed unit ball in $\R^n$. Then, for any $\epsilon >0$, there exist $z\in B_{\epsilon}(x)$ and $\zeta \in \partial^-_Ph(z)$ such that
\begin{eqnarray*}
|f(z)-f(x)| < \epsilon \quad
\mbox{and} \quad \zeta \cdot v > \rho \qquad \forall v \in \bar{B}_1.
\end{eqnarray*}
\end{lemma}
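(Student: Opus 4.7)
The plan is to prove Subbotin's theorem in three steps: translate the Dini-derivative hypothesis into a pointwise inequality on a ball around $x$, apply a smooth variational principle to extract a proximal subgradient at a nearby point, and tune parameters to achieve the sharp quantitative bound $\zeta \cdot v > \rho$ for every $v \in \bar B_1$.

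\textbf{Step 1 (Uniform pointwise inequality).} For each $v \in \bar B_1$, the strict inequality $Dh(x;v) > \rho$ together with the lim-inf definition provides an open neighborhood $U_v \subset \R^n$ of $v$ and a number $\tau_v > 0$ such that $(h(x+tw)-h(x))/t > \rho$ for every $w \in U_v$ and $0 < t < \tau_v$. Extracting a finite subcover of the compact set $\bar B_1$ and setting $\tau := \min_i \tau_{v_i}$, one writes any $y \in B_\tau(x) \setminus \{x\}$ as $y = x + |y-x| w$ with $w = (y-x)/|y-x| \in \bar B_1$ to deduce $h(y) - h(x) > \rho|y-x|$. Exploiting the strictness of each $Dh(x;v) > \rho$ by the same finite-cover argument yields some $\rho' \in (\rho, 0)$ and $\delta_0 \in (0, \epsilon)$ such that
\[
h(y) > h(x) + \rho'|y-x| \qquad \forall y \in B_{\delta_0}(x)\setminus\{x\},
\]
and continuity of $h$ allows one to shrink $\delta_0$ further so that $|h(y) - h(x)| < \epsilon$ on $B_{\delta_0}(x)$.

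\textbf{Step 2 (Smooth variational construction).} Fix $\delta \in (0, \delta_0)$ (to be chosen) and extend $h$ to $\R^n$ by $+\infty$ outside $\bar B_\delta(x)$; by Step 1 this extension is lsc and bounded below, with $x$ an $|\rho'|\delta$-approximate minimizer. The Borwein--Preiss smooth variational principle with quadratic kernel ($p = 2$) and scale parameter $\lambda > 0$ produces a point $z$ with $|z-x| \leq \lambda$ and a $C^\infty$ function $\Delta$ (a convex combination of squared distances to points near $z$) such that $h + \Delta$ attains its global minimum at $z$, together with the estimate $|\nabla\Delta(z)| \leq 2|\rho'|\delta/\lambda$. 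Since $h \equiv +\infty$ outside $\bar B_\delta(x)$, one has $z \in \bar B_\delta(x)$, and when $\lambda$ is chosen strictly less than $\delta$ the minimizer is interior, so the smoothness of $\Delta$ yields $\zeta := -\nabla\Delta(z) \in \partial_P^- h(z)$.

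\textbf{Step 3 and main obstacle.} Choosing $\delta$ small ensures $|z-x| \leq \lambda < \delta < \epsilon$, and the bound $h(z) \leq h(x)$ combined with the pointwise inequality from Step 1 gives $|h(z)-h(x)| < |\rho'|\delta < \epsilon$ provided $\delta$ is suitably small. The delicate part is the quantitative norm bound $|\zeta| < -\rho$, which is equivalent to $\zeta \cdot v > \rho$ for every $v \in \bar B_1$. The Borwein--Preiss estimate yields $|\zeta| \leq 2|\rho'|$, which is strictly less than $-\rho$ precisely when $|\rho'| < |\rho|/2$; if $\inf_{v \in \bar B_1} Dh(x;v) > \rho/2$, one simply picks $\rho'$ in the interval $(\rho/2, \inf_v Dh(x;v))$ and the proof is complete. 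In the borderline case where $\inf_v Dh(x;v)$ is close to $\rho$ the naive estimate is insufficient, and one must bootstrap: iterating the variational construction around successive points with progressively shrinking parameters, and extracting a limit via compactness of $\bar B_\epsilon(x)$ together with the closed-graph property of $\partial_P^-$. Ensuring that the limiting subgradient remains genuinely proximal (rather than merely limiting or of Clarke type) is the main technical hurdle.
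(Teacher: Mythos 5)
The paper itself does not prove this lemma: it is quoted verbatim from Clarke--Ledyaev--Stern--Wolenski (Theorem 4.2 p.~137 of \cite{clsw98}), so the only thing to assess is whether your argument is a proof, and it is not yet one. Your Steps 1 and 2 are sound: the compactness argument giving $h(y)>h(x)+\rho'|y-x|$ on a small ball for some $\rho'\in(\rho,0)$ is correct (note that $\rho<0$ is automatic from the hypothesis at $v=0$, which you use implicitly), and Borwein--Preiss with quadratic kernel applied to the extension of $h$ does produce an interior point $z$ and $\zeta\in\partial^-_Ph(z)$ with $|\zeta|\leq 2|\rho'|\delta/\lambda$. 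The gap is Step 3, as you yourself concede: the required conclusion is exactly $|\zeta|<-\rho$, and since Step 1 forces $\rho'<m:=\min_{\bar B_1}Dh(x;\cdot)$, hence $|\rho'|>|m|$ when $m\leq 0$, the bound $2|\rho'|\delta/\lambda$ can be brought below $-\rho$ only when $m>\rho/2$; in the regime $\rho<m\leq\rho/2$ no choice of $\rho'$, $\delta$, $\lambda$ repairs the factor $2$ coming from the kernel estimate $p\varepsilon/\lambda$. The proposed bootstrap does not close this: re-applying the ``easy case'' with a modified parameter $\tilde\rho$ requires simultaneously $\tilde\rho<2m$ and $\tilde\rho\geq\rho$, which again needs $m>\rho/2$; and iterating the construction and passing to a limit of proximal subgradients only yields an element of the limiting subdifferential $\partial^-_Lh$, not of $\partial^-_Ph$ -- precisely the hurdle you name without resolving.

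The factor-of-two loss is an artifact of invoking a generic variational principle, which re-centers the penalty at an unknown point and controls the gradient only through the ratio $\varepsilon/\lambda$. The classical argument keeps the penalty centered at $x$, where its slope is bounded by $|\rho'|<-\rho$ everywhere. Concretely: if $m>0$ then $x$ is a local minimum of $h$ and $\zeta=0$ works (recall $\rho<0$); otherwise fix $\rho'\in(\rho,m)$, so $\rho'<0$ and $h(y)-h(x)>\rho'|y-x|$ on some $\bar B_\tau(x)$, and for small $\sigma>0$ minimize over $\bar B_\tau(x)$ the continuous function
\[
y\;\longmapsto\; h(y)-\rho'\sqrt{|y-x|^2+\sigma^2}.
\]
Since this converges uniformly as $\sigma\to 0$ to $h(\cdot)-\rho'|\cdot-x|$, whose unique strict minimizer on $\bar B_\tau(x)$ is $x$, its minimizers $z_\sigma$ converge to $x$ and are interior for small $\sigma$; at an interior minimizer the smooth perturbation gives
\[
\zeta_\sigma:=\rho'\,\frac{z_\sigma-x}{\sqrt{|z_\sigma-x|^2+\sigma^2}}\in\partial^-_Ph(z_\sigma),
\qquad |\zeta_\sigma|<|\rho'|<-\rho,
\]
so $\zeta_\sigma\cdot v\geq-|\zeta_\sigma|>\rho'>\rho$ for all $v\in\bar B_1$, while $|z_\sigma-x|<\epsilon$ and $|h(z_\sigma)-h(x)|<\epsilon$ by continuity. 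Replacing your Steps 2--3 by this localized smooth penalization turns your outline into a complete proof.
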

To prove Proposition \ref{PROPidem}, we consider a point $x\in \mathcal{O}$, a co-vector $p\in \partial^-f(x)$ and a neighborhood $\mathcal{W}$ of $(x,p)\in T^*M$. In fact, up to taking a chart, we can assume that work in $\R^n$ with a function $f$ defined on an open set $O\subset \R^n$, with $x\in O$ and $p\in (\R^n)^*$, and with a neighborhood $\mathcal{W}$ of $(x,p)\in T^*(\R^n)$ which contains a set of the form $B_{\delta}(x) \times B_{\delta}^*(p)$ with $\delta >0$. Let $h: O \rightarrow \R$ be the continuous function  defined by 
\[
h(y):=f(y)-p\cdot y \qquad \forall y \in O.
\]
Since $p\in \partial^-f(x)$, we check easily that 
\[
Dh(x;v) \geq 0 \qquad \forall v \in \bar{B}_1.
\]
Thus, by applying Lemma \ref{LEMSubbotin} with $\rho=-\delta$ and $\epsilon=\delta$, there are $z \in B_{\delta}(x)$ and $\zeta \in \partial^-_Ph(z)$ such that $|h(z)-h(x)| < \delta$ and
\begin{eqnarray*}
 \zeta \cdot v > -\delta \qquad \forall v \in \bar{B}_1.
\end{eqnarray*}
We infer that $|\zeta|<\delta$ and $p+\zeta \in \partial_P^-f(z)$. 
\end{proof}

\subsection{Lipschitz points}

We assume in this section that $M$ is equipped with a smooth Riemannian metric $h$ whose geodesic distance is denoted by $d^h$ and for which at every $x\in M$ the associated norm in $T_xM$ is denoted by $|\cdot|_x$ and the norm of some $p\in T_x^*M$ is defined by $|p|_x:=|v|_x$ where $p=h_x(v,\cdot)$ (we refer the reader to \cite{sakai96} for further details of Riemannian geometry).  Then, for every $x\in M$, the pointed distance $d^h_x:=d^h(x,\cdot)$ is $1$-Lipschitz with respect to $d^h$, there is an open neighborhood $\mathcal{V}$ of $x$ such that $d^h_x$ is smooth in $\mathcal{V} \setminus \{x\}$ with a differential of norm $1$ and we have
\begin{eqnarray}\label{22avril1}
\partial^-d_x^h(x) =\Bigl\{ p\in T_x^*M \, \vert \, |p|_x\leq 1\Bigr\}.
\end{eqnarray}
As shown by the following result, the lipschitzness of $f$ is controlled by the size of co-vectors in $\partial^- f$. We recall that a set $\mathcal{C}\subset M$ is said to be {\it convex} with respect to $h$ if any minimizing geodesic between two points of $\mathcal{C}$ is contained in $\mathcal{C}$.

\begin{proposition}\label{PROPSubLip}
Let $\mathcal{C} \subset \mathcal{O}$ be an open convex set (with respect to $h$) and $K\geq 0$ be fixed, then the following properties are equivalent:
\begin{itemize}
\item[(i)] $f$ is $K$-Lipschitz (with respect to $d^h$) on $\mathcal{C}$.
\item[(ii)] For every $x\in \mathcal{C}$ and every $p\in \partial^-f(x)$, $|p|_x\leq K$. 
\end{itemize}
\end{proposition}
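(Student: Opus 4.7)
The plan is to establish the equivalence by proving the two implications separately.

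For $(i) \Rightarrow (ii)$, I would fix $p \in \partial^-f(x)$ with a $C^1$ support function $\varphi \leq f$ near $x$ satisfying $\varphi(x) = f(x)$ and $d\varphi(x) = p$. For any $v \in T_xM$, comparing $\varphi$ and $f$ along the $h$-geodesic $\gamma_v$ starting at $x$ with initial velocity $v$ — which satisfies $d^h(x, \gamma_v(t)) = t|v|_x$ for small $t>0$ — the $K$-Lipschitz bound on $f$ yields
\[
\frac{\varphi(\gamma_v(t)) - \varphi(x)}{t} \,\leq\, \frac{f(\gamma_v(t)) - f(x)}{t} \,\leq\, K|v|_x,
\]
and letting $t \to 0^+$ gives $p \cdot v \leq K|v|_x$. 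Applying the same estimate to $-v$ gives $|p|_x \leq K$.

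For $(ii) \Rightarrow (i)$, by symmetry (reversing a minimizing geodesic interchanges the roles of $x$ and $y$) it suffices to show $f(y) - f(x) \leq K d^h(x,y)$ for every $x, y \in \mathcal{C}$. I would argue by contradiction: suppose $f(y_0) - f(x_0) > (K+\epsilon) L$ for some $x_0, y_0 \in \mathcal{C}$ and some $\epsilon > 0$, where $L := d^h(x_0, y_0)$. After subdividing the minimizing geodesic from $x_0$ to $y_0$ into finitely many short sub-geodesics and noting that at least one subinterval must still satisfy the violating inequality, I may reduce to the case where the minimizing geodesic $\sigma$ lies in an open convex neighborhood $\mathcal{C}' \subset \mathcal{C}$ with compact closure in $\mathcal{C}$ and contained in the injectivity radius of $y_0$. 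On such a $\mathcal{C}'$, $d^h(y_0,\cdot)$ is $C^{\infty}$ on $\mathcal{C}' \setminus \{y_0\}$ with $h$-gradient of unit norm. I would then introduce the continuous auxiliary function
\[
H(z) \,:=\, f(z) + (K + \tfrac{\epsilon}{2})\, d^h(y_0, z), \qquad z \in \overline{\mathcal{C}'}.
\]
Since $H(x_0) - H(y_0) = [f(x_0) - f(y_0)] + (K + \epsilon/2) L < -\epsilon L/2 < 0$, one has $\inf_{\overline{\mathcal{C}'}} H \leq H(x_0) < H(y_0)$, so the infimum of $H$ on $\overline{\mathcal{C}'}$ is strictly less than $H(y_0)$ and is therefore not attained at $y_0$.

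To extract the contradiction with (ii), I would combine a variational argument with Proposition \ref{PROPidem}. Applying Ekeland's variational principle to $H$ on $\overline{\mathcal{C}'}$ (together, if needed, with a smooth blow-up penalty at $\partial \mathcal{C}'$ to guarantee interiority of the resulting minimizer), for each small $\lambda > 0$ I would obtain a point $v_\lambda \in \mathcal{C}' \setminus \{y_0\}$ such that $H + \lambda d^h(v_\lambda,\cdot)$ attains its global minimum at $v_\lambda$, so that $0 \in \partial^-[H + \lambda d^h(v_\lambda,\cdot)](v_\lambda)$. Invoking Proposition \ref{PROPidem} to approximate this viscosity subgradient by proximal ones at nearby points, and then applying the proximal sum rule — legitimate since $d^h(y_0,\cdot)$ is smooth at points near $v_\lambda \neq y_0$ while $\lambda d^h(v_\lambda,\cdot)$ contributes an error of norm at most $\lambda$ — I would produce a point $w_\lambda$ close to $v_\lambda$ and an element $p_\lambda \in \partial^-_P f(w_\lambda) \subset \partial^- f(w_\lambda)$ with $|p_\lambda|_{w_\lambda} \geq (K + \epsilon/2) - \lambda - o(1)$, which for $\lambda$ small enough exceeds $K$ and contradicts (ii). The main obstacle is precisely this last step — extracting a $\partial^- f$ element of controlled norm out of the variational inequality despite the presence of the non-smooth Lipschitz perturbation $\lambda d^h(v_\lambda,\cdot)$ — and it is exactly the situation Proposition \ref{PROPidem} is designed for, since passing to nearby proximal subgradients makes the sum rule behave cleanly.
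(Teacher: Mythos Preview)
Your argument for $(i)\Rightarrow(ii)$ is correct and is essentially the paper's (the paper phrases it as $p\in\partial^-(Kd^h_x)(x)$ and invokes~(\ref{22avril1}), but that is the same computation).

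For $(ii)\Rightarrow(i)$ your route diverges from the paper's, and the two steps you flag as delicate are not settled by what you wrote. First, the blow-up penalty: adding a smooth barrier $\beta$ near $\partial\mathcal{C}'$ does force the minimizer $v$ of $H+\beta$ into the interior, but it also adds $-d\beta(v)$ to the element you extract in $\partial^-f(v)$; you then need $|{-}(K+\tfrac{\epsilon}{2})\,dd^h_{y_0}(v)-d\beta(v)|>K$, which fails unless you additionally argue that $v$ lands in the region where $\beta\equiv 0$. That requires a quantitative comparison between the growth of $\beta$ and the oscillation of $f$ on $\overline{\mathcal{C}'}$ which you do not supply. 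Second, after Ekeland you have $0\in\partial^-\bigl[H+\lambda d^h(v_\lambda,\cdot)\bigr](v_\lambda)$, but $d^h(v_\lambda,\cdot)$ is not differentiable at $v_\lambda$, and Proposition~\ref{PROPidem} applied to the sum may very well return the base point $v_\lambda$ itself; at that point no sum rule splits $\partial^-_P$ of the sum into a piece in $\partial^-_P f$ plus an error of size~$\lambda$ (indeed $-\lambda d^h(v_\lambda,\cdot)$ has empty viscosity subdifferential at $v_\lambda$). So the sentence ``$\lambda d^h(v_\lambda,\cdot)$ contributes an error of norm at most~$\lambda$'' is precisely what is unjustified at the only point that matters.

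The paper sidesteps both issues in one move: instead of the linear penalty $(K+\tfrac{\epsilon}{2})d^h(y_0,\cdot)$ followed by Ekeland, it penalizes by a smooth \emph{convex} radial function $\phi(d^h(\cdot,y))$ with $\phi(t)=K't$ for small $t$ (any $K'>K$) and $\phi$ growing fast enough farther out to force the minimizer $x_g$ of $f+\phi(d^h(\cdot,y))$ into the interior of a fixed ball. Convexity gives $\phi'(t)\ge K'>K$ for every $t>0$, so if $x_g\neq y$ one directly obtains $-\phi'(d^h(x_g,y))\,dd^h_y(x_g)\in\partial^-f(x_g)$ with norm $\ge K'>K$, contradicting~(ii). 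Hence $x_g=y$, which yields $f(y)\le f(z)+K'd^h(y,z)$ for nearby $z$, and letting $K'\downarrow K$ finishes. No Ekeland, no sum rule, and interiority comes for free from the choice of~$\phi$.
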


\begin{proof}[Proof of Proposition \ref{PROPSubLip}]
Assume that (i) is satisfied and fix $x\in \mathcal{C}$ and $p\in \partial^-f(x)$ (if $\partial^-f(x)$  is nonempty). By assumption $f$ admits a support function from below $\varphi:\mathcal{V}\rightarrow \R$ with $d\varphi(x)=p$ and by $K$-Lipschitzness of $f$ we have
\[
\varphi(y)  \leq f(y) \leq f(x) + Kd^h(x,y) = \varphi(x) + Kd_x^h(y) \qquad \forall y\in \mathcal{V} \cap \mathcal{C}.
\]  
We infer that $p=d\varphi(x)$ belongs to $\partial^- (Kd_x^h)(x)=K\partial^-d_x^h(x)$, which by (\ref{22avril1}) gives $|p|_x\leq K$. \\

Let us now assume that (ii) is satisfied and fix $\bar{x}\in \mathcal{C}$, $\bar{\delta}>0$ with $\bar{B}^h(\bar{x},\bar{\delta})\subset \mathcal{C}$ and some constant $K'>K$. Pick a smooth convex  function $\phi : [0,\bar{\delta}) \rightarrow [0,+\infty)$ such that 
\begin{eqnarray}\label{22sept1}
\phi(t) = K't \qquad \forall t\in [0,\bar{\delta}/4]
\end{eqnarray}
and
\begin{eqnarray}\label{22sept2}
\phi(5\bar{\delta}/16) + \min_{x\in \bar{B}^h(\bar{x},\bar{\delta})} \left\{f(x)\right\} \geq  f(\bar{x})+ \phi(\bar{\delta}/8),
\end{eqnarray}  
fix $y,z$ in $B(\bar{x},\bar{\delta}/8)$, and define $g:B^h(\bar{x},\bar{\delta}/2) \rightarrow \R$ by
\[
g(x):= f(x)+\phi \left(d^h(x,y) \right) \qquad \forall x \in B^h(\bar{x},\bar{\delta}/2).
\]
The function $g$ is continuous on $B(\bar{x},\delta/2)$ and satisfies (by (\ref{22sept2}) and the fact that $\phi$ is increasing) for every $x\in B^h(\bar{x},\bar{\delta}/2) \setminus B^h(\bar{x},7\bar{\delta}/16)$,
\[
g(x) > f(x) + \phi(5\bar{\delta}/16) \geq  f(\bar{x})+ \phi(\bar{\delta}/8) \geq g(\bar{x}),
\]
hence it attains a minimum at some point $x_g\in B(\bar{x},\bar{\delta}/2)$. If $x_g\neq y$, since $x\mapsto \phi(d^h(x,y))$ is $C^1$ on $B(\bar{x},\bar{\delta}/2)\setminus \{y\}$ (with differential of norm $1$), this means that 
\[
-\phi' \left(d^h(x_g,y)\right) dd_y^h(x_g) \in \partial^-f(x_g),
\]
thus $|\phi'(d(x_g,y))|\leq K$ which contradicts the properties satisfied by $\phi$ ((\ref{22sept1}) and the convexity). In consequence $x_g=y$. Therefore, 
\[
f(y)=g(x_g) \leq g(z) =f(z)+\phi \left(d^h(z,y)\right) \leq f(z) +K'd^h(z,y),
\]
where we have used that $d^h(y,z)\leq \bar{\delta}/8$ and (\ref{22sept1}). Since $y,z$ are arbitrary points in $B(\bar{x},\bar{\delta}/8)$ and $K'$ any constant $>K$, we are done. 
\end{proof}

We say that $f$ is {\it Lipschitz} at some point $x\in \mathcal{O}$ if there is a smooth Riemannian metric on $M$ such that $f$ is Lipschitz, with respect to that metric, on an open neighborhood of $x$, and we denote by $\mbox{Lip}\,(f)$ the set of such points. Of course, if $f$ is Lipschitz at some point $x\in \mathcal{O}$ with respect to some metric then it is Lipschitz with respect to any other metric. Proposition \ref{PROPSubLip} yields the following characterization:

\begin{proposition}\label{PROPSubLipEQ}
For every $x\in \mathcal{O}$, the following properties are equivalent:
\begin{itemize}
\item[(i)] $x \in \mbox{Lip}\,(f)$.
\item[(ii)] $\partial^-f$ is bounded in a neighborhood of $x$, that is, there is a neighborhood $\mathcal{V}\subset \mathcal{O}$ of $x$ such that the set of $(y,q)$ with $y\in \mathcal{V}, q\in \partial^-f(y)$ is relatively compact in $T^*M$.
\end{itemize}
\end{proposition}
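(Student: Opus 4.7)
The plan is to deduce Proposition \ref{PROPSubLipEQ} directly from Proposition \ref{PROPSubLip} together with the standard fact from Riemannian geometry that every point admits arbitrarily small, relatively compact, geodesically convex neighborhoods. Fix, once and for all, an auxiliary smooth Riemannian metric $h$ on $M$; whether $f$ is Lipschitz at $x$ and whether $\partial^-f$ is bounded in a neighborhood of $x$ do not depend on the choice of metric (the former by definition, the latter because relative compactness in $T^*M$ is a purely topological property).

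For the implication $(i)\Rightarrow(ii)$, I would assume $f$ is $K$-Lipschitz with respect to $h$ on some open neighborhood $\mathcal{V}$ of $x$, shrink $\mathcal{V}$ to a relatively compact convex (for $h$) open neighborhood $\mathcal{C}$ of $x$, and invoke Proposition \ref{PROPSubLip} to conclude that $|p|_y \leq K$ for every $y\in \mathcal{C}$ and every $p\in \partial^-f(y)$. Since $\bar{\mathcal{C}}$ is compact and the set of co-vectors of norm $\leq K$ over $\bar{\mathcal{C}}$ is a compact subset of $T^*M$, this shows that $\{(y,q):y\in \mathcal{C},\, q\in \partial^-f(y)\}$ is relatively compact in $T^*M$, giving (ii).

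For the converse $(ii)\Rightarrow(i)$, I would start from a neighborhood $\mathcal{V}$ of $x$ as in (ii), and use the continuity of the dual norm $(y,p)\mapsto |p|_y$ induced by $h$ on $T^*M$ to extract a constant $K\geq 0$ such that $|p|_y\leq K$ on the relatively compact set $\{(y,q):y\in \mathcal{V},\, q\in \partial^-f(y)\}$. Choosing a convex (for $h$) open neighborhood $\mathcal{C}\subset \mathcal{V}$ of $x$, Proposition \ref{PROPSubLip} immediately yields that $f$ is $K$-Lipschitz on $\mathcal{C}$ with respect to $d^h$, so $x\in \mbox{Lip}\,(f)$.

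No step is really an obstacle here: the substantive content is already contained in Proposition \ref{PROPSubLip}, and the only technical ingredient needed beyond it is the existence of geodesically convex neighborhoods, which is classical. The only point that deserves a brief explicit verification is that relative compactness in $T^*M$ translates to a uniform norm bound once a Riemannian metric has been fixed, which follows because the base projection of a relatively compact set in $T^*M$ is relatively compact in $M$ and $(y,p)\mapsto |p|_y$ is continuous.
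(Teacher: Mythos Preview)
Your proposal is correct and follows exactly the approach intended by the paper, which simply states that Proposition~\ref{PROPSubLipEQ} is a characterization ``yielded'' by Proposition~\ref{PROPSubLip} without giving any further details. You have faithfully filled in those details (convex neighborhoods, continuity of the dual norm, relative compactness versus uniform norm bounds), and there is nothing to add or correct.
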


Finally, we note that by construction the set $\mbox{Lip}(f)$ is open and $f$ is locally Lipschitz on $\mbox{Lip}(f)$, thus Rademacher's Theorem (see {\it e.g.} \cite{eg15,federer69}) implies that $f$ is differentiable almost everywhere on $\mbox{Lip}(f)$.

\subsection{Lipzchitz points from below and limiting subdifferentials}

We say that $f$ is {\it Lipschitz from below} at some point $x\in M$ if it admits a support function from below at $x\in M$ which is Lipschitz on its domain, and we denote by $\mbox{Lip}^-(f)$ the set of such points. Moreover, we call {\it limiting subdifferential} of $f$ at $x$, denoted by $\partial^-_Lf(x)$, the set of $p\in T_x^*M$ for which there is a sequence $\{(x_k,p_k)\}_{k\in \N}$ converging to $(x,p)$ in $T^*M$ such that $p_k\in \partial^-f(x_k)$ for all $k\in \N$. As shown by the following result, Lipschitz points from below belong to the domain of the limiting subdifferential.

\begin{proposition}\label{PROPLip-Lim}
Let $h$ be a smooth Riemaniann metric on $M$, $K>0$ and $x\in \mathcal{O}$ be such that $f$ admits a support function from below at $x\in M$ which is $K$-Lipschitz (with respect to $d^h$) on its domain, then there is $p\in \partial^-_Lf(x)$ with $|p|_x\leq K$.
\end{proposition}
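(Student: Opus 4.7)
The plan is to use a smooth variational perturbation to produce, for each small $\epsilon > 0$, a nearby point $y_\epsilon \in M$ and a viscosity subgradient $p_\epsilon \in \partial^- f(y_\epsilon)$ of norm at most $K + \epsilon$; passing to the limit in $T^*M$ then yields the desired $p \in \partial^-_L f(x)$ with $|p|_x \leq K$. From the hypothesis I would first extract the one-sided Lipschitz lower bound $f(y) \geq \varphi(y) \geq \varphi(x) - K d^h(x,y) = f(x) - K d^h(x,y)$, valid on the domain $\mathcal{V}$ of $\varphi$. Fix $r > 0$ with $\bar{B}^h(x, r) \subset \mathcal{V}$ and $r$ below the injectivity radius of $h$ at $x$, so that $y \mapsto d^h(x, y)^2$ is smooth on $B^h(x, r)$; for each small $\epsilon > 0$, introduce the smooth penalty
\[
\phi_\epsilon(y) := (K + \epsilon) \sqrt{d^h(x, y)^2 + \epsilon^4}, \qquad H_\epsilon := f + \phi_\epsilon,
\]
for which a direct computation gives $|d\phi_\epsilon(y)|_y = (K+\epsilon)\,d^h(x,y)/\sqrt{d^h(x,y)^2 + \epsilon^4} \leq K + \epsilon$.

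Comparing $H_\epsilon(x) = f(x) + (K + \epsilon)\epsilon^2$ with the estimate $H_\epsilon(y) \geq f(x) + \epsilon\, d^h(x, y)$, obtained from the lower bound on $f$ together with $\sqrt{t^2 + \epsilon^4} \geq t$, one sees that, as soon as $\epsilon$ is small enough that $\epsilon r > (K + \epsilon)\epsilon^2$, the continuous function $H_\epsilon$ attains its minimum at some interior point $y_\epsilon \in B^h(x, r)$. Applied at $y_\epsilon$ itself, the same comparison yields $d^h(x, y_\epsilon) \leq (K + \epsilon)\epsilon$, so $y_\epsilon \to x$ as $\epsilon \to 0$. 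Since $y_\epsilon$ is a local minimum of $H_\epsilon$, the smooth function $\psi_\epsilon(y) := H_\epsilon(y_\epsilon) - \phi_\epsilon(y)$ is a $C^{\infty}$ support from below of $f$ at $y_\epsilon$, so $p_\epsilon := -d\phi_\epsilon(y_\epsilon) \in \partial^- f(y_\epsilon)$ with $|p_\epsilon|_{y_\epsilon} \leq K + \epsilon$.

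To conclude I would argue that in a coordinate chart around $x$, the sequence $(y_\epsilon, p_\epsilon)$ is bounded in $T^*M$ and that extracting a convergent subsequence $(y_{\epsilon_k}, p_{\epsilon_k}) \to (x, p)$ gives $p \in \partial^-_L f(x)$ directly from the definition, together with $|p|_x \leq K$ by continuity of the norm. The main point requiring care is the calibration of the penalty $\phi_\epsilon$: one needs a smooth substitute for $K\, d^h(x, \cdot)$ whose differential is bounded by $K + \epsilon$ (any absolute multiple of $K$, such as the one produced by a naive quadratic penalty $C\,d^h(x, \cdot)^2$, would be too crude to yield the sharp bound $|p|_x \leq K$), while $\phi_\epsilon(x)$ remains small enough and $\phi_\epsilon$ on $\partial B^h(x, r)$ large enough to simultaneously force the minimum of $H_\epsilon$ off the boundary and to localize it near $x$; the balance between the two scales $\epsilon^2$ at $x$ and $\epsilon r$ on the boundary is exactly what makes the choice $(K+\epsilon)\sqrt{d^h(x, \cdot)^2 + \epsilon^4}$ succeed.
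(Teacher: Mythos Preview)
Your proof is correct and takes a genuinely different route from the paper. The paper uses the classical \emph{variable-doubling} (decoupling) technique: it passes to a chart, extends $f$ and the support function $\tilde\varphi$ to all of $\R^n$, and minimizes the two-variable function
\[
\psi_k(y,z) = \tilde f(y) - \tilde\varphi(z) + k|y-z|^2 + |z-x|^2,
\]
extracting a subgradient of $\tilde f$ from the $y$-minimality and bounding it via the $z$-minimality together with Proposition~\ref{PROPSubLip} applied to $-\tilde\varphi$. Your argument instead immediately reduces the hypothesis to the one-sided inequality $f(y)\ge f(x)-K\,d^h(x,y)$ and runs a single-variable minimization of $f$ plus the smooth regularization $(K+\epsilon)\sqrt{d^h(x,\cdot)^2+\epsilon^4}$ of the distance cone. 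This is more elementary: it avoids the chart-and-extension step, avoids the auxiliary appeal to Proposition~\ref{PROPSubLip}, and the careful choice of scales ($\epsilon^4$ inside the square root) transparently delivers both localization $y_\epsilon\to x$ and the sharp bound $|p_\epsilon|\le K+\epsilon$. The paper's approach, on the other hand, is the standard workhorse of viscosity-solution and nonsmooth-analysis arguments and generalizes readily to settings where no smooth substitute for the distance is available; here, since you are already given a Riemannian metric $h$ with smooth squared distance near $x$, your direct penalty is the cleaner choice.
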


\begin{proof}[Proof of Proposition \ref{PROPLip-Lim}]
Let $x\in \mathcal{O}$ be such that $f$ admits a support function from below $\varphi$ at $x\in M$ which is $K$-Lipschitz with respect to a smooth Riemannian metric $h$. Up to considering a chart and extending properly the restrictions of $f_{\vert \mathcal{V}}$ and $\varphi_{\vert \mathcal{V}}$ on a small open neighborhood $\mathcal{V}$ of $x\in \R^n$ to the whole $\R^n$, we may assume that we work in $\R^n$ with $\tilde{f}:\R^n \rightarrow \R$ continuous such that $\tilde{f}_{\vert \mathcal{V}}=f_{\vert \mathcal{V}}$ (so that $\partial^-_L\tilde{f}(x)=\partial^-_Lf(x)$) and a support function $\tilde{\varphi}:\R^n \rightarrow \R$ of $\tilde{f}$ (or $f$) at $x$ which is $\tilde{K}$-Lipschitz with respect to the Euclidean metric for some $\tilde{K}>K$ as close to $K$ as we want. Thus, we have by assumption
\begin{eqnarray}\label{11mai1}
\tilde{f}(x) = \tilde{\varphi}(x)  \quad \mbox{and} \quad \tilde{f}(y) - \tilde{\varphi}(y) \geq 0 \quad \forall y \in \R^n.
\end{eqnarray}
For every positive integer $k$, define the function $\psi_k :\R^n \times \R^n \rightarrow \R$ by
\[
\psi_k(y,z) := \tilde{f}(y) - \tilde{\varphi}(z) + k |y-z|^2 + |z-x|^2 \qquad \forall (y,z) \in \R^n \times \R^n
\]
and set
\[
m_k := \inf \Bigl\{ \psi_k(y,z) \, \vert \, (y,z) \in \R^n \times \R^n \Bigr\}.
\]
By $\tilde{K}$-Lipschitzness of $\tilde{\varphi}$ and (\ref{11mai1}),  we have for every $k\in \N^*$ and any $y,z\in \R^n$, 
\begin{eqnarray*}
\psi_k(y,z)  & = & \tilde{f}(y) - \tilde{\varphi}(y) + \left( \tilde{\varphi}(y) - \tilde{\varphi}(z) \right) + k |y-z|^2 + |z-x|^2 \\
& \geq & -\tilde{K} |y-z| + k |y-z|^2 +  |z-x|^2.
\end{eqnarray*}
Thus, since $m_k\leq \psi_k(x,x)=0$, we infer that for every $k\in \N^*$ the infimum in the definition of $m_k$ is attained at some $(y_k,z_k)$, {\it i.e.} $m_k = \psi_k(y_k,z_k)$, and there holds
\[
\lim_{k\rightarrow +\infty} y_k = \lim_{k\rightarrow +\infty} z_k = x. 
\]
Given $k\in \N^*$, we note that $\psi_{k}(y_k,\cdot) \geq m_k$ gives 
\[
\tilde{f}(y_k) - \tilde{\varphi}(z) + k |y_k-z|^2 + |z-x|^2 \geq \tilde{f}(y_k) - \tilde{\varphi} (z_k) + k |y_k-z_k|^2 + |z_k-x|^2
\]
for all $z\in \R^n$, which means that the function $- \tilde{\varphi}$ admits the smooth function
\[
 z \longmapsto  - \tilde{\varphi}(z_k) + k |y_k-z_k|^2 -  k |y_k-z|^2 + |y_k-x|^2 - |z-x|^2
 \]
 as a support function from below at $z_k$ and so yields 
 \[
 -2k\left( z_k-y_k\right)^* -2 \left( z_k-x\right)^* \in \partial^- (-\tilde{\varphi})(z_k).
 \]
Similarly, $\psi_{k}(\cdot,z_k) \geq m_k$ gives 
\[
 p_k:= -2k\left( y_k-z_k\right)^* \in \partial^- \tilde{f}(y_k) \qquad \forall k \in \N^*.
 \]
Since $-\tilde{\varphi}$ is $\tilde{K}$-Lipschitz with respect to the Euclidean metric, we have by Proposition \ref{PROPSubLip}
\[
\left|-2k\left( z_k-y_k\right)^* -2 \left( z_k-x\right)^* \right| \leq \tilde{K} \quad \mbox{for all } k.
\]
In conclusion, we have shown that for all $k$, we have $p_k \in  \partial^- \tilde{f}(y_k)$ with $\lim_{k\rightarrow +\infty} y_k=x$ and $|p_k| \leq \tilde{K} +o(1)$ for $k$ large. Since $\tilde{f}_{\vert \mathcal{V}}=f_{\vert \mathcal{V}}$, this implies that $p_k \in  \partial^- f(y_k)$ for $k$ large enough, which by compactness gives some $\tilde{p} \in \partial^-_L f(x)$ with $|p|\leq \tilde{K}$. We conclude by letting $\tilde{K}$ tend to $K$.  
\end{proof}

The following result is an easy consequence of Rademacher's Theorem, it shows that viscosity subdifferentials of $f$ are nonempty almost everywhere over  $\mbox{Lip}^-(f)$.

\begin{proposition}\label{PROPLip-Sub}
There is a set $\mathcal{N}\subset \mbox{Lip}^-(f)$ of Lebesgue measure zero in $M$ such that $\partial^-f(x)\neq \emptyset$ for every $x\in \mbox{Lip}^-(f) \setminus \mathcal{N}$.
\end{proposition}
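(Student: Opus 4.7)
The plan is to reduce to Rademacher's theorem via an upper-envelope construction. The starting observation, which follows directly from the remark at the beginning of Section \ref{SECSubLip} that a differentiable support function from below can be replaced by a $C^{1}$ one with the same differential, is that if $\Phi$ is Lipschitz on a neighborhood of $x$ with $\Phi\leq f$, $\Phi(x)=f(x)$, and $\Phi$ differentiable at $x$, then $d\Phi(x)\in \partial^{-}f(x)$ (since $\Phi$ itself is then a support function from below at $x$ that is differentiable there). Hence the proof reduces to producing, at almost every $x\in \mbox{Lip}^{-}(f)$, a Lipschitz support function from below of $f$ at $x$ that is differentiable at $x$.

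To this end, I would fix a smooth Riemannian metric $h$ on $M$, a countable dense subset $\mathcal{D}\subset M$, and consider the countable family of triples $(x_{i},r,K)\in \mathcal{D}\times \mathbb{Q}_{>0}\times \mathbb{N}^{*}$ for which the open ball $B_{i,r}:=B^{h}(x_{i},r)\cap \mathcal{O}$ is contained in a chart of $M$. Denote by $A_{i,r,K}$ the set of $x\in B_{i,r}$ at which $f$ admits a $K$-Lipschitz support from below defined on the whole of $B_{i,r}$. A straightforward approximation -- shrink the Lipschitz witness of any $x\in \mbox{Lip}^{-}(f)$ to a smaller chart ball lying in its domain and round up its Lipschitz constant to an integer -- shows that $\mbox{Lip}^{-}(f)=\bigcup_{i,r,K} A_{i,r,K}$.

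For each triple $(i,r,K)$ with $A_{i,r,K}\neq \emptyset$, I would then form the upper envelope
\[
\Phi_{i,r,K}(z):=\sup \bigl\{ \varphi(z) \, \vert \, \varphi:B_{i,r}\rightarrow \R \mbox{ is } K\mbox{-Lipschitz and } \varphi\leq f \mbox{ on } B_{i,r}\bigr\}.
\]
The family is nonempty (any witness belongs to it) and pointwise bounded above by $f$, so $\Phi_{i,r,K}$ is a well-defined $K$-Lipschitz function on $B_{i,r}$ with $\Phi_{i,r,K}\leq f$. For each $x\in A_{i,r,K}$ the witness $\varphi_{x}$ satisfies $\varphi_{x}(x)=f(x)$, hence $\Phi_{i,r,K}(x)=f(x)$, so $\Phi_{i,r,K}$ is itself a Lipschitz support function from below of $f$ at $x$. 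Working in the ambient chart, Rademacher's theorem provides a Lebesgue-null set $N_{i,r,K}\subset B_{i,r}$ off of which $\Phi_{i,r,K}$ is differentiable; by the opening observation, $d\Phi_{i,r,K}(x)\in \partial^{-}f(x)$ for every $x\in A_{i,r,K}\setminus N_{i,r,K}$. Setting $\mathcal{N}:=\mbox{Lip}^{-}(f)\cap \bigcup_{i,r,K} N_{i,r,K}$ yields a Lebesgue-null subset of $\mbox{Lip}^{-}(f)$ with the desired property.

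The only nontrivial step is the upper-envelope trick, which converts the pointwise existence of Lipschitz minorants (the content of the definition of $\mbox{Lip}^{-}(f)$) into a single Lipschitz function per ball on which Rademacher's theorem can be applied directly; the dependence of the Lipschitz constant and of the domain of the minorant on the point is absorbed by the countable parameter families. No deeper difficulty is expected, matching the author's description of the result as an easy consequence of Rademacher's theorem.
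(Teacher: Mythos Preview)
Your proof is correct and follows essentially the same approach as the paper: cover $\mbox{Lip}^{-}(f)$ by countably many sets indexed by a dense point, a radius, and a Lipschitz constant, build on each ball the upper envelope of all Lipschitz minorants of $f$ with that constant, and apply Rademacher's theorem to this single Lipschitz function. The only cosmetic difference is that the paper combines the radius and Lipschitz parameters into a single integer $l$ (using balls $B(x_k,1/l)$ and constant $l$) and writes the defining condition of $A_{k,l}$ directly as the inequality $f(y)-f(x)\geq -l|y-x|$, which is equivalent to your formulation.
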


\begin{proof}[Proof of Proposition \ref{PROPLip-Sub}]
Without loss of generality, up to considering charts, we may assume that $M=\R^n$. Pick a sequence $\{x_k\}_{k\in \N^*}$ which is dense in $\R^n$ and set for every $k,l \in \N^*$,
\[
A_{k,l} :=\Bigl\{ x\in  \mbox{Lip}^-(f) \cap B(x_k,1/l) \, \vert \, f(y)-f(x) \geq -l|y-x|, \, \forall y\in B(x_k,1/l)\Bigr\}.
\]
Since $\mbox{Lip}^-(f)=\cup_{l,k\in \N^*}A_{l,k}$, it is sufficient to show that for every $k,l\in \N^*$, $\partial^-f(x)\neq \emptyset$ for almost every $x\in A_{k,l}$. Fix $k,l\in \N^*$ such that $A_{k,l} \neq \emptyset$ and define the function $\phi_{k,l}:A_{k,l} \rightarrow \R$ by
\[
\phi_{k,l}(x) := \sup \Bigl\{\phi(x) \, \vert \, \phi: B(x_k,1/l)\rightarrow \R  \mbox{ is $l$-Lipschitz and } \phi\leq f \mbox{ on }  B(x_k,1/l)\Bigr\}.
\] 
By construction, $\phi_{k,l}$ is finite (because $A_{k,l} \neq \emptyset$), $l$-Lipschitz and equal to $f$ on $A_{k,l}$. By Rademacher's Theorem (see {\it e.g.} \cite{eg15,federer69}), we infer that for almost every $x\in A_{k,l}$, $\phi_{k,l}$ is differentiable at $x$ which means that $\partial^-\phi_{k,l}(x)$ is a singleton and $\phi_{k,l}$ admits a support function from below differentiable at $x$ and shows that $\partial^- f(x)$ is nonempty.
\end{proof}

\subsection{On the domain of the limiting subdifferential}\label{SECAlter}

We may wonder whether limiting subdifferentials are nonempty almost everywhere. The following result gives a positive answer in one dimension: 

\begin{proposition}\label{PROPDichotomy}
Let $a,b\in \R$ with $a<b$ and $\varphi :(a,b) \rightarrow \R$ be a continuous function. Then, for almost every $x\in (a,b)$, one of the following property is satisfied:
\begin{itemize}
\item[(i)] $\varphi$ is differentiable at $x$.
\item[(ii)] There is a sequence $\{x_k\}_{k\in \N}$ converging to $x$ such that $0\in \partial^-\varphi(x_k)$ for all $k\in \N$, in particular we have $0\in \partial^-_L \varphi(x)$.
\end{itemize}
\end{proposition}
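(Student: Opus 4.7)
The plan is to isolate the ``bad'' set on which neither (i) nor (ii) holds and show it is negligible by exploiting one-dimensional rigidity. Let
\[
E := \Bigl\{ y \in (a,b) \, \vert \, 0 \in \partial^-\varphi(y) \Bigr\}.
\]
Condition (ii) holds at $x$ precisely when $x \in \overline{E}$, so it suffices to prove that $\varphi$ is differentiable at almost every point of the open set $F := (a,b) \setminus \overline{E}$.

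The crucial observation is that $\varphi$ admits no local minimum on $F$: if $y^{*}$ were such a point, the constant function $\psi \equiv \varphi(y^{*})$ would be a $C^{\infty}$ support function from below at $y^{*}$ with $\psi'(y^{*}) = 0$, forcing $y^{*} \in E \subset \overline{E}$ and contradicting $y^{*} \in F$. Consequently, on every compact subinterval $[c,d]$ contained in any connected component $(\alpha,\beta)$ of $F$, the minimum of $\varphi$ must be attained at an endpoint, and so
\[
\varphi(y) \geq \min \bigl\{ \varphi(c), \varphi(d) \bigr\} \qquad \forall y \in [c,d].
\]
This is exactly quasi-concavity of $\varphi$ on $(\alpha,\beta)$.

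Next I would invoke the elementary fact that a continuous quasi-concave function on an open interval is unimodal: there exists $c^{*} \in [\alpha,\beta]$ (possibly at the boundary) such that $\varphi$ is non-decreasing on $(\alpha,c^{*})$ and non-increasing on $(c^{*},\beta)$. This follows because the upper level sets of $\varphi$ are intervals; if, for instance, $\varphi(u) > \varphi(v)$ for some $u < v < c^{*}$, then $v$ would lie strictly between $u$ and $c^{*}$ with $\varphi(v) < \min\{\varphi(u), \varphi(c^{*})\}$, a contradiction. Lebesgue's classical theorem on differentiability of monotone functions then yields that $\varphi$ is differentiable almost everywhere on $(\alpha,\beta)$, and summing over the countably many connected components of $F$ concludes the proof.

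The conceptual step is recognising that failure of (ii) forces the strong shape constraint of quasi-concavity on each component of $F$; once this is in hand, the conclusion reduces to the classical monotone-differentiability theorem and I do not foresee further obstacles. The argument is genuinely one-dimensional, consistent with the author's remark that no analogous alternative is available in higher dimension.
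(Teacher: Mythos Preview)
Your proof is correct and follows essentially the same approach as the paper: both observe that on the open complement of the closure of $\{y:0\in\partial^-\varphi(y)\}$ there can be no interior local minima, deduce unimodality on each subinterval (the paper via its Lemma~\ref{LEMDim1}, you via the equivalent language of quasi-concavity), and conclude by Lebesgue's monotone differentiation theorem. Your direct argument on connected components is marginally cleaner than the paper's density-point contradiction, but the substance is identical.
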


\begin{proof}[Proof of Proposition \ref{PROPDichotomy}]
The result will be an easy corollary of the following lemma:

\begin{lemma}\label{LEMDim1}
Let $I=[c,d] \subset (a,b)$ with $c<d$ be a closed interval such that 
\[
\min \left\{\varphi(z) \, \vert \, z \in [x,y]\right\} = \min \left\{\varphi(x),\varphi(y)\right\} \qquad \forall x<y \mbox{ in } I.
\]
Then there is $e \in I$ such that $\varphi$ is monotone over $[c,e]$ and $[e,d]$. 
\end{lemma}
\begin{proof}[Proof of Lemma \ref{LEMDim1}]
Let $\bar{x},e \in \R$ be such that 
\[
\varphi(\bar{x}) = \min \left\{\varphi(z) \, \vert \, z \in [c,d]\right\} \quad \mbox{and} \quad \varphi(e)= \max  \left\{\varphi(z) \, \vert \, z \in [c,d]\right\}.
\] 
Since by assumption the minimum of $\varphi$ over $I$ is equal to the the minimum of $\varphi(c)$ and $\varphi(d)$, we may assume that $\bar{x}=c$ or $\bar{x}=d$. If $\bar{x}=c$, we claim that $\varphi$ is nondecreasing on $[c,e]$ and nonincreasing on $[e,d]$. If not, there are $x<y$ in $[c,e]$ such that $\varphi(x)>\varphi(y)$, so that
\[
\min  \left\{\varphi(z) \, \vert \, z \in [x,e]\right\} \leq \varphi(y)  < \varphi(x) \leq \varphi(e),
\]
which contradicts the assumption. The rest of the proof is left to the reader.
\end{proof}
The set $S$ of $x\in (a,b)$, for which there is a sequence $\{x_k\}_{k\in \N}$ converging to $x$ such that $0\in \partial^-\varphi(x_k)$ for all $k\in \N$, is closed in $(a,b)$. We need to show that $\varphi$ is differentiable almost everywhere in $(a,b)\setminus S$.  Suppose for contradiction that there is a set $E \subset (a,b)\setminus S$ of positive Lebesgue measure such that $\varphi$ is not differentiable at any $x\in E$ and fix $\bar{x}$ a density point of $E$. We claim that there are $c, d, e \in (a,b)\setminus S$ with $\bar{x}\in (c,d) \subset (a,b)\setminus S$ and $e\in [c,d]$ such that  $\varphi$ is monotone on $[c,e]$ and $[e,d]$. As a matter of fact, otherwise, for every $k\in \N^*$ large enough,  the assumption of Lemma \ref{LEMDim1} is not satisfied over the interval $I_k:=[\bar{x}-1/k,\bar{x}+1/k] \subset (a,b)\setminus S$ so there are $x_k, y_k, z_k\in I_k$ with $x_k< z_k< y_k$ such that
\[
\varphi(z_k) = \min \left\{\varphi(z) \, \vert \, z \in [x_k,y_k] \right\} < \min \left\{\varphi(x_k),\varphi(y_k)\right\},
\]
which means that $\varphi$ attains a local minimum at $z_k$ so that $0\in \partial^-f(z_k)$ with $z_k$ converging to $x$ as $k$ tends to $+\infty$, a contradiction. In conclusion, since monotone functions are differentiable almost everywhere, we infer that $\varphi$ is differentiable almost everywhere in a neighborhood of $\bar{x}$, which contradicts the  fact that $\bar{x}$ is a density point of $E$. 
\end{proof}

\begin{remark}\label{RemDYS}
The Denjoy-Young-Saks Theorem allows to make more precise assertion (ii). Given $\varphi$ as in Proposition \ref{PROPDichotomy}, the {\it Dini derivatives} $D^+\varphi, D_+\varphi,D^-\varphi, D_-\varphi:(a,b) \rightarrow \R\cup \{\pm \infty\}$ of $\varphi$ at $x\in (a,b)$ are defined by 
 \[
 D^+\varphi(x) = \limsup_{h\rightarrow 0^+} \frac{\varphi(x+h)-\varphi(x)}{h}, \quad D_+\varphi(x) = \liminf_{h\rightarrow 0^+} \frac{\varphi(x+h)-\varphi(x)}{h}
 \] 
\[
 D_-\varphi(x) = \liminf_{h\rightarrow 0^+} \frac{\varphi(x)-\varphi(x-h)}{h}, \quad D^-\varphi(x) = \limsup_{h\rightarrow 0^+} \frac{\varphi(x)-\varphi(x-h)}{h}.
\]
Denjoy-Young-Saks' Theorem (see \cite{hanson34} and references therein) asserts that for almost every $x\in (a,b)$, one of the following assertions holds:
\begin{itemize}
\item[(1)] $D^+\varphi(x)= D_+\varphi(x)=D^-\varphi(x)= D_-\varphi(x) \in \R$, {\it i.e.} $f$ is differentiable at $x$,
\item[(2)] $D^+f(x)=D^-f(x)=+\infty$ and $D_+f(x)=D_-f(x)=-\infty$,
\item[(3)] $D^+f(x)=+\infty, D_-f(x)=-\infty$ and $D_+f(x)=D^-f(x)\in \R$,
\item[(4)] $D^-f(x)=+\infty, D_+f(x)=-\infty$ and $D_-f(x)=D^+f(x)\in \R$.
\end{itemize}
As a consequence, we may also suppose in Proposition \ref{PROPDichotomy} (ii) that one of the above assertions (2), (3), (4) is satisfied.
 \end{remark}

Proposition \ref{PROPDichotomy} implies that the limiting subdifferential of a continuous function in dimension one is nonempty almost everywhere, we do not know if this result holds true in higher dimension (see Section \ref{SEC8jan}). 

\subsection{Projective limiting subdifferentials}

We now introduce an object that allows to capture limits of elements of $\partial^-f$ going to infinity. For every $x\in \mathcal{O}$, we call {\it projective limiting subdifferential} of $f$ at $x$, denoted by $\partial^-_{PL}f(x)$, the set of $p\in T_x^*M\setminus \{0\}$ for which there are sequences $\{(x_k,p_k)\}_{k\in \N}$ in $T^*M$ and $\{\lambda_k\}_{k\in \N}$ in $(0,+\infty)$ satisfying
\[
\lim_{k\rightarrow +\infty} \left|p_k\right|_{x_k} = +\infty, \quad \lim_{k\rightarrow +\infty} \left( x_k,\lambda_k p_k\right) = (x,p)
\quad
\mbox{and} \quad p_k \in \partial^-f(x_k) \quad \forall k \in \N.
\]
By construction, the set $\partial^-_{PL}f(x)$ is a positive cone, that is, if $p\in \partial^-_{PL}f(x)$ then $\lambda p\in \partial^-_{PL}f(x)$ for all $\lambda >0$. Proposition \ref{PROPSubLipEQ} implies the following:

\begin{proposition}\label{PROPLip-PL}
For every $x\in \mathcal{O}$, $\partial^-_{PL}f(x)=\emptyset$ if and only if $x\in \mbox{Lip}(f)$.
\end{proposition}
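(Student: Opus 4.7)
The plan is to derive Proposition \ref{PROPLip-PL} as a direct consequence of the characterization of Lipschitz points given in Proposition \ref{PROPSubLipEQ}, namely that $x \in \mbox{Lip}(f)$ iff $\partial^- f$ is locally bounded near $x$ (in the sense of relative compactness of $\{(y,q) : y \in \mathcal{V},\ q \in \partial^- f(y)\}$ in $T^*M$ for some neighborhood $\mathcal{V}$ of $x$). Once the problem is recast this way, both implications reduce to straightforward compactness arguments on the cotangent bundle equipped with any auxiliary smooth Riemannian metric $h$.

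For the easy direction, I would assume $x \in \mbox{Lip}(f)$ and pick $\mathcal{V}$ as above. Then for any sequences $\{(x_k,p_k)\}$ with $x_k \to x$ and $p_k \in \partial^- f(x_k)$, we have $x_k \in \mathcal{V}$ for $k$ large, so $|p_k|_{x_k}$ stays bounded. This immediately precludes the existence of sequences witnessing membership in $\partial^-_{PL} f(x)$, since the latter requires $|p_k|_{x_k} \to +\infty$. Hence $\partial^-_{PL} f(x) = \emptyset$.

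For the converse, I would argue by contrapositive: suppose $x \notin \mbox{Lip}(f)$. By Proposition \ref{PROPSubLipEQ}, $\partial^- f$ is not bounded on any neighborhood of $x$, so one can extract a sequence $\{(x_k,p_k)\}$ in $T^*M$ with $x_k \to x$, $p_k \in \partial^- f(x_k)$, and $|p_k|_{x_k} \to +\infty$. Define $\lambda_k := 1/|p_k|_{x_k} > 0$, so that $|\lambda_k p_k|_{x_k} = 1$. Fixing a compact neighborhood of $x$ and using the local triviality of $T^*M$, the sequence $\{(x_k, \lambda_k p_k)\}$ lies in a compact subset of the unit cosphere bundle, hence admits a subsequence converging to some $(x, p) \in T^*M$ with $|p|_x = 1$, in particular $p \neq 0$. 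By construction this $p$ lies in $\partial^-_{PL} f(x)$, so the projective limiting subdifferential is nonempty.

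The proof is essentially routine once the reformulation via Proposition \ref{PROPSubLipEQ} is in place; the only mild care needed is to check that the normalization $\lambda_k = |p_k|_{x_k}^{-1}$ produces a relatively compact sequence in the cotangent bundle, for which invoking the auxiliary Riemannian metric $h$ and working in a coordinate chart near $x$ suffices. No serious obstacle is anticipated.
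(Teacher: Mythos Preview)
Your proposal is correct and follows exactly the approach the paper indicates: the paper simply states that Proposition \ref{PROPLip-PL} is implied by Proposition \ref{PROPSubLipEQ}, and your argument spells out the (routine) compactness details needed to pass between the two. Nothing is missing or different.
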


As we shall see in the next section, viscosity and limiting subdifferentials of pointed sub-Riemannian distances come along with minimizing normal extremals while projective limiting subdifferentials are associated with minimizing abnormal extremals.  

\section{Minimizing Sard Conjecture, subdifferentials and lipschitzness}\label{SEC3}

Throughout this section we consider a point $x\in M$ and fix a smooth Riemannian metric $h$ on $M$. Then, we denote by $d_{SR}^x:=d_{SR}(x,\cdot)$ the pointed sub-Riemannian distance and we define $f_x:M\rightarrow \R$ by 
\begin{eqnarray}\label{19oct1}
f_x(y) := \frac{1}{2} d_{SR}^x(y)^2 \qquad \forall y \in M.
\end{eqnarray}
We check easily by definition of the viscosity subdifferential that we have for every $y\in M\setminus \{x\}$ and every $p\in T_y^*M$, 
\begin{eqnarray}
p \in \partial^-d_{SR}^x(y) \quad \Longleftrightarrow \quad d_{SR}(x,y) p \in \partial^-f_x(y).
\end{eqnarray}

The aim of this section is to explain the link between subdifferentials of $f_x$ and minimizing geodesics and eventually to provide several characterization of the Minimizing Sard Conjecture.

\subsection{Abnormal and normal extremals}\label{SECExtremals}

We explain below how the notions of abnormal and normal extremals emerge in sub-Riemannian geometry and introduce several notations that will be used in the next sections, we refer the reader to \cite{riffordbook} for further details. \\

Let us consider $\bar{y}\neq x$ in $M$ and  $\bar{\gamma} : [0,1] \rightarrow M$ a minimizing geodesic from $x$ to $\bar{y}$, that is, a horizontal path that minimizes ($|v|^2=g_x(v,v)$)
\[
\mbox{energy}^g(\gamma)= \int_0^1 |\dot{\gamma}(t)|^2\, dt,
\]
among all paths $\gamma\in \Omega_x^{\Delta}$ verifying $\gamma(1)=\bar{y}$. Then, consider an orthonormal family $\mathcal{F} = \{X^1, \ldots, X^m\}$ of smooth vector fields which parametrizes $\Delta$ on an open neighborhood $\mathcal{V}\subset M$ of $\bar{\gamma}([0,1])$  and denote by $E$ the end-point mapping associated with $x$ and $\mathcal{F}$ defined by
$$
E(u) :=  \gamma^u(1) \qquad \forall u \in L^2([0,1],\R^m),
$$
where $\gamma^u$ is the curve in $\Omega_x^{\Delta}$ solution to the Cauchy problem
$$
\dot{\gamma}^u (t) = \sum_{i=1}^m u_i(t) \, X^i \left(\gamma^u(t)\right) \, \mbox{ for a.e. } t \in [0,1], \quad \gamma^u(0)=x.
$$
By construction, there is a control $\bar{u} \in L^2([0,1],\R^m)$ such that $\bar{\gamma}=\gamma^{\bar{u}}$ and $E$ is well-defined and smooth on  an open set $U\subset  L^2([0,1],\R^m)$ containing $\bar{u}$.  Then, we define the smooth mappings $C: U \rightarrow \R$ and $F: U \rightarrow M\times \R$ by  
\begin{eqnarray}\label{10oct1}
C(u):= \frac{1}{2} \| u\|_{L^2}^2 \quad \mbox{and} \quad F(u) := \left( E (u), C(u) \right)  \qquad \forall u \in U
\end{eqnarray}
and note that for every $u\in U$ we have, because $\mathcal{F}$ is orthonormal,
\[
C(u) = \frac{1}{2} \mbox{energy}^g(\gamma^u).
\]
In particular, by construction we have 
\[
C(\bar{u}) = \frac{1}{2} \mbox{energy}^g(\bar{\gamma}) = \frac{1}{2} d_{SR}(x,\bar{y})^2 
\quad
\mbox{and} \quad \left| \bar{u}(t) \right| = d_{SR}(x,\bar{y}) \quad \mbox{for a.e. } t \in [0,1].
\]
Furthermore, we observe that since $\bar{\gamma}$ has no self-intersection, we may assume by shrinking $\mathcal{V}$ if necessary,  that $\mathcal{V}$ is smoothly diffeomorphic to the open unit ball $B^n(0,1) \subset \R^n$ through a diffeomorphism $\Phi: \mathcal{V} \rightarrow B^n(0,1)$ satisfying
\begin{eqnarray}\label{ESTDiffeo}
\frac{1}{K}|p| \leq \left| p \cdot d_x\Phi \right|^*_x \leq K |p| \qquad \forall x \in \mathcal{V}, \, \forall p \in (\R^n)^*
\end{eqnarray}
for some constant $K>0$ depending on $\bar{\gamma}$, so that we may assume from now that we are in $\R^n$. 

Let us now consider a local minimizer $u\in U$ of $C$ with end-point $y:=E(u)$, that is, such that
\[
C(u) = \min \Bigl\{C(u') \, \vert \, u' \in U, \, E(u') = y \Bigr\}.
\]
By the above construction, this means that the horizontal path $\gamma^u$ minimizes the energy $\mbox{energy}^g(\gamma)$ among all paths $\gamma\in \Omega_x^{\Delta}$ sufficiently close to $\gamma^u$ verifying $\gamma(1)=y$. By the Lagrange Multiplier Theorem, there are $\bar{p}^u \in T_y^*M$ and $\bar{p}_0^u \in \{0,1\}$ with $(\bar{p}^u,\bar{p}_0^u) \neq (0,0)$ such that (we denote here the differentials of smooth function with uppercase letter "D")
\begin{eqnarray}\label{Lagrange}
\bar{p}^u \cdot D_{u} E  = \bar{p}_0^u \, D_{u} C,
\end{eqnarray}
where the differentials of $E$ and $C$ at $u$ are respectively given by 
\begin{eqnarray}\label{dE}
D_{u}E(v) = \int_0^1 S^u(1) S^u(t)^{-1} B^u(t) v(t)\, dt \qquad \forall v \in L^2( [0,1],\R^m)
\end{eqnarray}
and
\begin{eqnarray}\label{dC}
D_{u}C(v)  =   \int_0^1\langle u(t),v(t)\rangle \, dt \qquad \forall v \in L^2( [0,1],\R^m),
\end{eqnarray}
where we have defined $B^u : [0,1] \rightarrow M_{n,m}(\R)$ by
\begin{eqnarray}\label{Bu}
B^u(t) := \left(X^{1}(\gamma^{u}(t),\cdots,X^{m}(\gamma^{u}(t)  \right) \qquad \forall t \in [0,1]
\end{eqnarray}
and $S^u:[0,1] \rightarrow M_n(\R)$ as the solution to the Cauchy problem
\begin{eqnarray}\label{5avril3}
\dot{S}^u(t)=A^u(t)S^u(t) \quad \mbox{for a.e. } t \in [0,1], \quad S^u(0)=I_{n},
\end{eqnarray}
with $A^u:[0,1] \rightarrow M_n(\R)$ given by ($J_{X^i}$ is the Jacobian matrix of $X^i$)
\begin{eqnarray}\label{5avril4}
A^u(t) := \sum_{i=1}^m u_{i}(t) J_{X^{i}}\left(\gamma^u(t)\right) \qquad \mbox{for a.e. } t \in [0,1].
\end{eqnarray} 
Then we define the extremal $\psi^{u,\bar{p}^u} : [0,1] \rightarrow T^*M$ by 
\begin{eqnarray}\label{formulaextremal}
\psi^{u,\bar{p}^u}(t) := \left(\gamma^u(t),p^{u,\bar{p}^u}(t)\right) :=  \left(\gamma_u(t), \bar{p}^u \cdot S^u(1)S^u(t)^{-1}\right)  \qquad \forall t \in [0,1].
\end{eqnarray}
By construction, $\psi^{u,\bar{p}^u}$ is a lift of $\gamma^u$ verifying $\psi^{u,\bar{p}^u}(1)=(y,\bar{p}^u)$ and we have  the following result:

\begin{proposition}\label{PROPp0}
Depending on the value of $\bar{p}_0^u \in \{0,1\}$ in (\ref{Lagrange}), we have:
\begin{itemize}
\item[(i)] If $\bar{p}_0^u=0$, then $\psi^{u,\bar{p}^u}$ is an abnormal extremal (and $\gamma^{u}$ is a singular).
\item[(ii)] If $\bar{p}_0^u=1$, then  $\psi^{u,\bar{p}^u}$ is a normal extremal and we have 
\[
u(t) = B^u(t)^{*} p(t)^* \qquad \mbox{for a.e. } t \in [0,1].
\]
\end{itemize}
\end{proposition}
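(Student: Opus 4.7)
The plan is to unpack the Lagrange multiplier condition (\ref{Lagrange}) using the explicit formulas (\ref{dE}) and (\ref{dC}) for the differentials of $E$ and $C$, read off a pointwise equation relating $p^{u,\bar{p}^u}(t)$, $B^u(t)$ and $u(t)$, and then match what comes out with the standard definitions of abnormal and normal extremals. Substituting (\ref{dE}) and (\ref{dC}) into (\ref{Lagrange}) gives, for every $v \in L^2([0,1],\R^m)$,
\[
\int_0^1 \Bigl( \bar{p}^u \cdot S^u(1) S^u(t)^{-1} B^u(t) - \bar{p}_0^u\, u(t)^* \Bigr) v(t)\, dt = 0.
\]
Since $v$ ranges over all of $L^2$, the fundamental lemma of the calculus of variations together with definition (\ref{formulaextremal}) yields the pointwise identity
\[
p^{u,\bar{p}^u}(t)\, B^u(t) = \bar{p}_0^u\, u(t)^* \qquad \text{for a.e. } t \in [0,1]. \tag{$\star$}
\]

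Next I would verify that $p^{u,\bar{p}^u}$ satisfies the linearized adjoint equation. Starting from $p^{u,\bar{p}^u}(t) = \bar{p}^u \cdot S^u(1) S^u(t)^{-1}$, differentiating the relation $S^u(t) S^u(t)^{-1} = I_n$ with the help of (\ref{5avril3}) gives $\tfrac{d}{dt}\bigl(S^u(t)^{-1}\bigr) = -S^u(t)^{-1} A^u(t)$, and therefore
\[
\dot{p}^{u,\bar{p}^u}(t) = -\, p^{u,\bar{p}^u}(t)\, A^u(t) \qquad \text{for a.e. } t \in [0,1]. \tag{$\star\star$}
\]
From here the two cases are immediate. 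In case (i), $\bar{p}_0^u = 0$ forces $\bar{p}^u \neq 0$ by non-triviality of $(\bar{p}^u, \bar{p}_0^u)$, and since $S^u(1) S^u(t)^{-1}$ is invertible for every $t \in [0,1]$ the covector $p^{u,\bar{p}^u}(t)$ never vanishes; moreover ($\star$) reduces to $p^{u,\bar{p}^u}(t) \cdot X^i(\gamma^u(t)) = 0$ for $i=1,\ldots,m$, i.e., $p^{u,\bar{p}^u}(t)$ annihilates $\Delta(\gamma^u(t))$, which together with ($\star\star$) is precisely the system defining an abnormal extremal lift of $\gamma^u$ (so $\gamma^u$ is singular). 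In case (ii), $\bar{p}_0^u = 1$ turns ($\star$) directly into $u(t) = B^u(t)^* p^{u,\bar{p}^u}(t)^*$, which combined with ($\star\star$) is the standard Pontryagin characterization of a normal extremal for the orthonormally parametrized sub-Riemannian problem.

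The calculation is essentially mechanical, so the main subtleties are bookkeeping: keeping track of transposes and of the row/column conventions for $p^{u,\bar{p}^u}$ and $B^u$, legitimately passing from the $L^2$ orthogonality relation to the pointwise equation ($\star$), and invoking invertibility of $S^u(1) S^u(t)^{-1}$ to guarantee that the lift constructed in case (i) is genuinely nontrivial at every $t$ rather than just at $t=1$. Once these are in place, both assertions of the proposition follow directly from ($\star$) and ($\star\star$).
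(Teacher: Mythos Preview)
Your proof is correct. The paper does not give its own proof of this proposition; it is stated as a standard fact with a reference to \cite{riffordbook} for the underlying formulas, and your argument is exactly the routine derivation one finds there: plug (\ref{dE}) and (\ref{dC}) into (\ref{Lagrange}), pass to the pointwise identity, check the adjoint equation for $p^{u,\bar{p}^u}$, and read off the abnormal/normal dichotomy.
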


This being said, we explain in the next section the link between subdifferentials of $f_x$ and  extremals. We keep the same notations as above. 

\subsection{Subdifferentials and extremals}

The key result to connect subdifferentials of $f_x$ to normal extremals, due to Trélat and the author \cite{rt05}, is the following:
 
\begin{proposition}\label{PROPsub1}
Let $y\in M\setminus \{x\}$, $p\in \partial^- f_x(y)$ and $u\in U$ be a local minimizer of $C$ with end-point $E(u)=y$, then we have 
\begin{eqnarray}\label{26sept1}
p\cdot D_{u}E = D_{u}C.
\end{eqnarray}
In particular, there is a unique minimizing geodesic from $x$ to $y$, it is given by the projection of the normal extremal $\psi:[0,1] \rightarrow T^*M$ satisfying $\psi(1)=(y,p)$. Moreover, if $\psi(0)$ is not a critical point of the exponential mapping $\exp_x$, then $y$ does not belong to $\mbox{Abn}^{min}(x)$.
\end{proposition}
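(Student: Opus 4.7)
The plan is to convert the viscosity inequality defining $p\in \partial^-f_x(y)$ into an unconstrained smooth minimization on the open set $U\subset L^2$ whose first-order condition is exactly (\ref{26sept1}). I would fix a $C^1$ support function $\varphi$ of $f_x$ from below at $y$ with $d\varphi(y)=p$ and introduce
\[
\Psi : U \longrightarrow \R, \qquad \Psi(u') := C(u') - \varphi\bigl(E(u')\bigr).
\]
For any $u'\in U$ the path $\gamma^{u'}\in \Omega^{\Delta}_x$ joins $x$ to $E(u')$, so Cauchy--Schwarz yields $d_{SR}(x,E(u'))^2\leq \mbox{length}^g(\gamma^{u'})^2\leq 2C(u')$, hence $f_x\circ E\leq C$ on all of $U$; combined with $\varphi\leq f_x$ near $y$ this gives $\Psi\geq 0$ in a neighborhood of $u$. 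At the point $u$, the local-minimality hypothesis forces $C(u)=\frac{1}{2}d_{SR}(x,y)^2=f_x(y)=\varphi(y)$, so $\Psi(u)=0$ and $u$ is a genuine local minimizer of the smooth function $\Psi$. The first-order condition $D_u\Psi=0$ then reads
\[
D_uC - p\cdot D_uE = 0,
\]
which is exactly (\ref{26sept1}).

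Next, I would compare (\ref{26sept1}) with the Lagrange multiplier relation (\ref{Lagrange}) to force $(\bar p^u,\bar p_0^u)=(p,1)$, so that Proposition~\ref{PROPp0}(ii) identifies the lift $\psi:=\psi^{u,p}$ as a normal extremal with $\psi(1)=(y,p)$ and with the control recovered via $u(t)=B^u(t)^*\,p^{u,p}(t)^*$. Since the normal Hamiltonian flow is deterministic when run backward from $(y,p)$, any other minimizing control from $x$ to $y$, to which the same argument applies with the same $p$, would produce the very same extremal $\psi$ and hence the same geodesic $\pi\circ\psi$; this gives uniqueness.

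For the final assertion, if $\psi(0)\in T_x^*M$ is not a critical point of $\exp_x$, the standard identification of $d_{\psi(0)}\exp_x$ with the end-point derivative $D_uE$ along the corresponding control shows that $D_uE$ is surjective; thus $u$ is a regular point of $E$ and $\gamma^u$ is not singular. Combined with uniqueness, every minimizing horizontal path from $x$ to $y$ equals $\gamma^u$ and is therefore non-singular, so $y\notin \mbox{Abn}^{min}(x)$. I expect the main obstacle to be the key equality $\Psi(u)=0$ in the first paragraph: without exploiting that the local minimum value of $C$ really equals $\frac{1}{2}d_{SR}(x,y)^2$ one only has $\Psi\geq 0$ near $u$, which is insufficient to extract the first-order condition, and this is precisely where the subdifferential property of $p$ interlocks with the minimizing nature of the control $u$.
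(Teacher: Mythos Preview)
Your argument is correct and follows essentially the same route as the paper: introduce $\Psi=C-\varphi\circ E$, observe that $\Psi\geq 0$ near $u$ with $\Psi(u)=0$, and read off (\ref{26sept1}) as the first-order condition; then use Proposition~\ref{PROPp0}(ii) and the backward Hamiltonian flow to get uniqueness and the final non-singularity claim. The paper's proof is terser but identical in structure, and your added explanations (Cauchy--Schwarz for $f_x\circ E\leq C$, the deterministic backward flow for uniqueness, the identification of $d_{\psi(0)}\exp_x$ with $D_uE$) are all standard and correct fillings-in of what the paper leaves implicit.
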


\begin{proof}[Proof of Proposition \ref{PROPsub1}]
Let $y\in M\setminus \{x\}$, $p\in \partial^- f_x(y)$, and $u\in U$ be a local minimizer of $C$ with end-point $E(u)=y$ and let $\varphi:M\rightarrow \R$ be a support function from below of class $C^1$ with $d\varphi(x)=p$. By assumption, we have 
\[
C(u) = f_x(y)=\varphi(y) \quad \mbox{and} \quad C(u') \geq f_x\left(E(u')\right) \geq \varphi \left(E(u')\right) \quad \forall u'\in U,
\]
which means that the function $C-\varphi\circ E$ attains a local minimum at $u$. Hence we have (\ref{26sept1}) and by Proposition \ref{PROPp0} we infer that $\gamma^u$ has to be the projection of the normal extremal $\psi^{u,\bar{p}^{u}}$ with $\bar{p}^{u}:=d_{y}\varphi =p$. If $\psi(0)$ is not a critical point of $\exp_x$, then the horizontal path $\gamma^u$ is not singular and and there is no other minimizing geodesic from $x$ to $y$, we infer that  $y\notin \mbox{Abn}^{min}(x)$.
\end{proof}

\begin{remark}\label{REM28oct}
We note that if $y\in M\setminus \{x\}$, $p\in \partial^-_P f_x(y)$ and $u\in U$ is a local minimizer of $C$ with end-point $E(u)=y$, then we have (\ref{26sept1}) and moreover since the function $C-\varphi\circ E$ with a local minimum at $u$ is of class $C^2$ (where $\varphi:M\rightarrow \R$ is a support function from below of class $C^2$ with $d\varphi(x)=p$), we have
\begin{eqnarray*}
 D^2_uC (v) - p\cdot D^2_uE(v) \geq 0 \qquad \forall v \in \mbox{\rm Ker}(D_uE),
\end{eqnarray*}
where $D^2_uC$ and $D^2_uE$ stand for the quadratic forms defined respectively by the second order differentials of $C$ and $E$ at $u$.
\end{remark}

As a consequence, Proposition \ref{PROPsub1} allows to associate to each $p\in \partial^-_L f_x(y)$ a normal extremal whose projection is minimizing.
 
\begin{proposition}\label{PROPsub2}
Let $y\in M\setminus \{x\}$ and $p\in \partial^-_L f_x(y)$, then the projection of the normal extremal $\psi:[0,1] \rightarrow T^*M$ satisfying $\psi(1)=(y,p)$ is a minimizing geodesic from $x$ to $y$.
\end{proposition}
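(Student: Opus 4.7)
By definition of $\partial^-_L f_x(y)$, fix a sequence $\{(y_k,p_k)\}_{k\in\N}$ in $T^*M$ converging to $(y,p)$ with $p_k\in \partial^-f_x(y_k)$ for every $k$. Since $y\neq x$, we may assume $y_k\neq x$ for all $k$. Applying Proposition \ref{PROPsub1} to each $(y_k,p_k)$, there is a unique minimizing geodesic $\gamma_k:[0,1]\to M$ from $x$ to $y_k$, and it is the projection of the normal extremal $\psi_k:[0,1]\to T^*M$ with $\psi_k(1)=(y_k,p_k)$.

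The plan is to pass to the limit in the Hamiltonian system. Since $d_{SR}(x,y_k)\to d_{SR}(x,y)$, the geodesics $\gamma_k$ all stay in a fixed compact ball of $(M,d_{SR})$, hence in a fixed compact subset $K\subset M$. Because each $\psi_k$ solves the smooth Hamiltonian ODE associated with $H(z,q)=\frac{1}{2}|q|_{\Delta}^{*2}$ with terminal datum $(y_k,p_k)$, and since the momenta satisfy $p_k(t)=p_k\cdot S^{u_k}(1)S^{u_k}(t)^{-1}$ with the resolvents $S^{u_k}(\cdot)$ uniformly controlled over $K$, the family $\{\psi_k([0,1])\}_k$ remains in a common compact subset of $T^*M$. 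Standard continuous dependence of ODEs with smooth right-hand side on terminal data then yields that the normal extremal $\psi$ determined by $\psi(1)=(y,p)$ is well defined on all of $[0,1]$ and that $\psi_k\to\psi$ uniformly on $[0,1]$. In particular, $\gamma:=\pi\circ\psi$ is the uniform limit of $\gamma_k$, so $\gamma(0)=x$ and $\gamma(1)=y$.

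It remains to show that $\gamma$ is minimizing. By Proposition \ref{PROPp0}(ii), the control $u_k$ generating $\gamma_k$ satisfies $u_k(t)=B^{u_k}(t)^*p_k(t)^*$ for a.e.\ $t$, and the uniform convergence $\psi_k\to\psi$ in $T^*M$ (together with $\gamma_k\to\gamma$) forces $u_k\to u$ uniformly, where $u$ is the control associated with $\psi$ in the same way. Hence $\gamma$ is a horizontal path from $x$ to $y$ with $\mbox{energy}^g(\gamma)=\lim_k \mbox{energy}^g(\gamma_k)=\lim_k d_{SR}(x,y_k)^2=d_{SR}(x,y)^2$, which precisely means that $\gamma$ is a minimizing geodesic from $x$ to $y$. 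The main difficulty in this plan lies in the uniform bound on the $\psi_k$ and the extension of $\psi$ to all of $[0,1]$; this is handled by the completeness of $(M,d_{SR})$ together with the resolvent representation of $p_k(t)$ recalled in Section \ref{SECExtremals}.
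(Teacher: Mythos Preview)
Your proof is correct and follows essentially the same approach as the paper's: take a sequence $(y_k,p_k)\to(y,p)$ with $p_k\in\partial^-f_x(y_k)$, apply Proposition~\ref{PROPsub1} to get minimizing geodesics $\gamma_k$ that are projections of normal extremals $\psi_k$ with $\psi_k(1)=(y_k,p_k)$, and pass to the limit using smoothness of the Hamiltonian flow and continuity of $d_{SR}$. The paper compresses your convergence and minimality arguments into the two phrases ``by regularity of the Hamiltonian flow'' and ``minimizing as a limit of minimizing geodesics,'' whereas you spell out the energy computation and the a priori bounds; your resolvent justification is slightly roundabout (the cleanest route is simply that $\psi_k(t)=\phi^H_{t-1}(y_k,p_k)$ and $\phi^H$ is a smooth, globally defined flow for a complete structure), but the argument is sound.
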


\begin{proof}[Proof of Proposition \ref{PROPsub2}]
Let $y\in M\setminus \{x\}$, $p\in \partial^-_L f_x(y)$ and $\{(y_k,p_k)\}_{k\in \N}$ be a sequence converging to $(y,p)$ in $T^*M$ such that $p_k\in \partial^-u(y_k)$ for all $k\in \N$. By Proposition \ref{PROPsub1}, for every $k\in \N$, the projection $\gamma_k:[0,1] \rightarrow M$ of the normal extremal $\psi_k:[0,1] \rightarrow T^*M$ verifying $\psi_k(1)=(y_k,p_k)$ is a minimizing geodesic from $x$ to $y_k$. By regularity of the Hamiltonian flow, the sequence $\{\psi_k\}_{k\in \N}$ converges to the normal extremal $\psi:[0,1] \rightarrow T^*M$ verifying $\psi(1)=(y,p)$ and its projection is minimizing as a limit of minimizing geodesics from $x$ to $y_k$ which converges to $y$.
\end{proof}

If however we consider a co-vector $p$ in $\partial^-_{PL} f_x(y)$ then Proposition \ref{PROPsub1} allows to obtain a minimizing singular geodesic associated with $p$, more precisely we have:

\begin{proposition}\label{PROPsub3}
Let $y\in M\setminus \{x\}$ and $p\in \partial^-_{PL} f_x(y)$, then there is a singular minimizing geodesic from $x$ to $y$. Moreover, for all sequences $\{(y_k,p_k)\}_{k\in \N}$ in $T^*M$ and $\{\lambda_k\}_{k\in \N}$ in $(0,+\infty)$ satisfying
\[
\lim_{k\rightarrow +\infty} \left|p_k\right|_{y_k} = +\infty, \quad \lim_{k\rightarrow +\infty} \left( y_k,\lambda_k p_k\right) = (y,p)
\quad
\mbox{and} \quad p_k \in \partial^-f(y_k) \quad \forall k \in \N,
\]
any uniformly convergent subsequence of the sequence of minimizing geodesics $\{\gamma_k\}_{k\in \N}$ given by the projections of the normal extremals  $\psi_k:[0,1] \rightarrow T^*M$ verifying $\psi_k(1)=(y_k,p_k)$, converges to a singular minimizing geodesic admitting an abnormal extremal $\psi:[0,1] \rightarrow T^*M$ such that $\psi(1)=(y,p)$.
\end{proposition}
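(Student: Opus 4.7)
The idea is to apply Proposition~\ref{PROPsub1} to each $(y_k,p_k)$ in the approximating sequence, producing a Lagrange multiplier identity $p_k \cdot D_{u_k}E = D_{u_k}C$, and then to multiply by $\lambda_k$ and let $k\to +\infty$. Since $\lambda_k \to 0$ while $\lambda_k p_k \to p\neq 0$, the rescaling suppresses the right-hand side and yields an abnormal condition $p\cdot D_u E=0$ in the limit, corresponding via Proposition~\ref{PROPp0}(i) to a singular minimizing geodesic with abnormal lift.

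First I would address the existence of uniformly convergent subsequences. Let $u_k\in U$ be the control associated with $\gamma_k$; by Proposition~\ref{PROPsub1} applied to each $(y_k,p_k)$, $\gamma_k=\gamma^{u_k}$ is the unique minimizing geodesic from $x$ to $y_k$, so that $\|u_k\|_{L^2}^2 = 2C(u_k) = d_{SR}(x,y_k)^2$ is uniformly bounded. The Cauchy--Schwarz inequality applied to $\dot\gamma^{u_k}=\sum_{i=1}^m u_{k,i}X^i(\gamma^{u_k})$ renders the $\gamma_k$ uniformly $1/2$-H\"older, hence equicontinuous; by Ascoli--Arzel\`a some subsequence converges uniformly. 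Consequently the first assertion of the proposition will follow from the ``moreover'' part once the latter is established.

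Next, starting from a uniformly convergent subsequence $\gamma_k\to \gamma$, I would extract a further subsequence with $u_k \rightharpoonup u$ weakly in $L^2$. Weak lower semicontinuity of the $L^2$ norm together with the minimality of $\gamma_k$ forces
$$\|u\|_{L^2}^2 \leq \liminf_{k\to +\infty}\|u_k\|_{L^2}^2 = d_{SR}(x,y)^2,$$
while on the other hand $\gamma=\gamma^u$ joins $x$ to $y$, so $\|u\|_{L^2}^2\geq d_{SR}(x,y)^2$. Hence $\|u_k\|_{L^2}\to\|u\|_{L^2}$ and the weak convergence upgrades to strong convergence in $L^2$. This strong convergence, combined with the uniform convergence $\gamma_k\to \gamma$, yields uniform convergence $B^{u_k}\to B^u$ of the matrices in (\ref{Bu}) and, by continuous dependence of linear ODEs on their coefficients, $S^{u_k}\to S^u$ on $[0,1]$, so that $D_{u_k}E\to D_uE$ as operators on $L^2([0,1],\R^m)$ (after passing to a chart as in Section~\ref{SECExtremals}).

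Finally, multiplying the identity $p_k \cdot D_{u_k}E = D_{u_k}C$ provided by Proposition~\ref{PROPsub1} by $\lambda_k$ and letting $k\to +\infty$ using $\lambda_k p_k\to p$, $\lambda_k\to 0$ (forced by $|p_k|_{y_k}\to +\infty$) and the strong convergence $u_k\to u$ in $L^2$ gives
$$p\cdot D_u E = 0, \qquad p\neq 0.$$
This is exactly the Lagrange condition (\ref{Lagrange}) with $\bar p_0^u=0$, so Proposition~\ref{PROPp0}(i) guarantees that $\gamma=\gamma^u$ is singular and that $\psi^{u,p}:[0,1]\to T^*M$ is an abnormal extremal satisfying $\psi^{u,p}(1)=(y,p)$; $\gamma$ is minimizing because $\|u\|_{L^2}^2=d_{SR}(x,y)^2$. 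The delicate point I expect is precisely the upgrade from weak to strong $L^2$ convergence of the controls: without it the limit of $D_{u_k}E$ is not available and the rescaling argument collapses, so the norm-convergence argument based on minimality is the main technical ingredient.
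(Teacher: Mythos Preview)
Your proof is correct and follows essentially the same route as the paper: apply Proposition~\ref{PROPsub1} along the approximating sequence to obtain $p_k\cdot D_{u_k}E=D_{u_k}C$, multiply by $\lambda_k$, and pass to the limit using $\lambda_k p_k\to p$ and $\lambda_k\to 0$ to reach $p\cdot D_uE=0$, whence Proposition~\ref{PROPp0}(i) yields the abnormal lift. The only difference is that the paper takes the strong $L^2$ convergence $u_{k_l}\to\bar u$ for granted (phrasing the hypothesis as convergence in $W^{1,2}([0,1],M)$ and implicitly relying on the compactness of minimizing geodesics recalled later as Lemma~\ref{STEP1_LEM1}), whereas you supply a self-contained argument for it via weak $L^2$ convergence plus norm convergence from minimality; this extra detail is correct and is indeed the point where the limit would otherwise fail.
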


\begin{proof}[Proof of Proposition \ref{PROPsub3}]
Let $y\in M\setminus \{x\}$, $p\in \partial^-_{PL} f_x(y)$ and two sequences $\{(y_k,p_k)\}_{k\in \N}$ in $T^*M$ and $\{\lambda_k\}_{k\in \N}$ in $(0,+\infty)$ satisfying the properties given in the statement. Assume that some subsequence $\{\gamma_{k_l}\}_{l\in \N}$ of  the sequence $\{\gamma_k\}_{k\in \N}$ given by the projections of the normal extremals  $\psi_k:[0,1] \rightarrow T^*M$ verifying $\psi_k(1)=(y_k,p_k)$ converges uniformly (in $W^{1,2}([0,1],M)$) to some minimizing geodesic $\bar{\gamma}$. Using the notations of the previous section, we write $\gamma=\gamma^{\bar{u}}$ with $\bar{u} \in L^2( [0,1],\R^m)$ and notice that for an index $l$ large enough, $l\geq L$,  we can write $\gamma_{k_l}$ as $\gamma^{u_l}$ for some $u_l\in U$ where the subsequence $\{u_l\}_{l\geq L}$ tends to $\bar{u}$ in $L^2( [0,1],\R^m)$ as $l$ tends to $\infty$. Then we have for all index $l\geq L$,
\[
p_{k_l}\cdot D_{u_l}E = D_{u_l}C
\]
which gives
\[
\lambda_{k_l} p_{k_l}\cdot D_{u_l}E = \lambda_{k_l} D_{u_l}C \qquad \forall l \geq L,
\]
where
\[
\lim_{l\rightarrow +\infty} \lambda_{k_l} p_{k_l} = p \quad \mbox{and} \quad \lim_{l\rightarrow +\infty} \lambda_{k_l} =0.
\]
By passing to the limit, we obtain $p \cdot D_{\bar{u}}E=0$ which, by Proposition \ref{PROPp0}, shows that $\gamma=\gamma^{\bar{u}}$ is a singular minimizing geodesic admitting an abnormal extremal $\psi:[0,1] \rightarrow T^*M$ such that $\psi(1)=(y,p)$.

\end{proof}

\subsection{Non-Lipschitz points admit Goh abnormals}\label{SECGoh}

For every $x\in M$, $\mbox{Goh-Abn}^{min}(x)$ stands for the set of $y\in M$ for which there is a minimizing path $\gamma \in \Omega^{\Delta}_x$ joining $x$ to $y$ which is singular and admits an abnormal lift $\psi:[0,1] \rightarrow \Delta^{\perp}$ satisfying the Goh condition
\begin{eqnarray}\label{Goh26sept}
\psi(t) \cdot [\Delta,\Delta](\gamma(t)) = 0 \qquad \forall t \in [0,1].
\end{eqnarray}
By construction, $\mbox{Goh-Abn}^{min}(x)$ is a closed set satisfying 
\[
\mbox{Goh-Abn}^{min}(x) \subset \mbox{Abn}^{min}(x).
\]
Agrachev and Sarychev \cite{as99} showed that any singular minimizing geodesic with no normal extremal lifts does admit an abnormal lift satisfying the Goh condition and Agrachev and Lee \cite{al09} proved that the absence of Goh abnormal extremals imply lipschitzness properties for $f_x$. In some sense, the following result combines the two results. 

\begin{proposition}\label{PROPGohLip}
For every $x\in M$, we have $M\setminus \mbox{Lip}(d_x) \subset \mbox{Goh-Abn}^{min}(x)$. Moreover, for any $y\in M \setminus \mbox{Lip}(f_x) $, any $p \in \partial^-_{PL}f_x(y)$ and any sequences $\{(y_k,p_k)\}_{k\in \N}$ in $T^*M$ and $\{\lambda_k\}_{k\in \N}$ in $(0,+\infty)$ satisfying
\[
\lim_{k\rightarrow +\infty} \left|p_k\right|_{y_k} = +\infty, \quad \lim_{k\rightarrow +\infty} \left( y_k,\lambda_k p_k\right) = (y,p)
\quad \mbox{and} \quad p_k \in \partial^-f_x(y_k) \quad \forall k \in \N,
\]
any minimizing horizontal path from $x$ to $y$, obtained as the uniform limit of a subsequence of the sequence of minimizing geodesics $\{\gamma_k\}_{k\in \N}$ given by the projections of the normal extremals  $\psi_k:[0,1] \rightarrow T^*M$ verifying $\psi_k(1)=(y_k,p_k)$, does admit an abnormal extremal $\psi:[0,1] \rightarrow T^*M$ with $\psi(1)=(y,p)$ which satisfies the Goh condition.
\end{proposition}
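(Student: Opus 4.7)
The first inclusion will follow from the second (stronger) assertion: if $y \in M \setminus \mbox{Lip}(d_x)$ with $y \neq x$, then, since $f_x = \tfrac{1}{2}d_x^2$ is away from its minimum near $y$, one has $y \notin \mbox{Lip}(f_x)$, so Proposition~\ref{PROPLip-PL} furnishes a nonzero $p \in \partial^-_{PL}f_x(y)$ together with an associated sequence, and the second assertion then produces a minimizing horizontal path from $x$ to $y$ with a Goh abnormal lift, i.e. $y \in \mbox{Goh-Abn}^{min}(x)$. Hence I focus on the second assertion. Given $y,p,(y_k,p_k),\lambda_k$ as in the statement, the minimizing geodesics $\gamma_k$ are equi-Lipschitz with $L^2$-bounded controls $u_k$ (their energies $\tfrac{1}{2}d_{SR}(x,y_k)^2$ converge), so by Arzel\`a--Ascoli some subsequence converges uniformly to a minimizing path $\bar{\gamma} = \gamma^{\bar{u}}$ from $x$ to $y$ with $u_k \to \bar{u}$ in $L^2$. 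Proposition~\ref{PROPsub3} already provides, along this subsequence, an abnormal lift $\psi$ of $\bar{\gamma}$ with $\psi(1)=(y,p)$, so $p \cdot D_{\bar{u}}E = 0$; what remains to establish is the Goh condition along $\psi$.

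The strategy is to transfer the non-Lipschitzness of $f_x$, encoded by the blow-up $|p_k|\to\infty$, into a second-order inequality on the limiting abnormal extremal. By Proposition~\ref{PROPidem} together with a diagonal extraction, we may assume $p_k \in \partial^-_P f_x(y_k)$ for each $k$ (the perturbation can be made small enough to preserve $\lambda_k p_k \to p$). Proposition~\ref{PROPsub1} identifies $u_k$ as the unique local minimizer of $C$ with $E(u_k)=y_k$; Remark~\ref{REM28oct} then gives the second-order necessary condition
\[
p_k \cdot D^2_{u_k}E(v,v) \leq D^2_{u_k}C(v,v) \qquad \forall v \in \mbox{Ker}(D_{u_k}E).
\]
Multiplying by $\lambda_k > 0$, using $\lambda_k \to 0$, $\lambda_k p_k \to p$, $u_k \to \bar{u}$, and approximating a given $v \in \mbox{Ker}(D_{\bar{u}}E)$ by vectors $v_k \in \mbox{Ker}(D_{u_k}E)$ with $v_k \to v$ in $L^2$, one passes to the limit to obtain
\[
p \cdot D^2_{\bar{u}}E(v,v) \leq 0 \qquad \forall v \in \mbox{Ker}(D_{\bar{u}}E).
\]
Combined with the first-order identity $p \cdot D_{\bar{u}}E = 0$, this non-positivity of the abnormal quadratic form is precisely the hypothesis of the Goh-type second-order openness theorem recalled in Appendix~\ref{SECSecondOrder} (in the spirit of Agrachev--Sarychev), which forces $\psi(t) \cdot [\Delta,\Delta](\bar{\gamma}(t)) = 0$ for all $t \in [0,1]$.

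The principal technical obstacle lies in the kernel approximation used in the passage to the limit. For $v \in \mbox{Ker}(D_{\bar{u}}E)$, producing $v_k \in \mbox{Ker}(D_{u_k}E)$ with $v_k \to v$ in $L^2$ is not automatic, because $\mbox{rank}(D_{u_k}E)$ may strictly exceed $\mbox{rank}(D_{\bar{u}}E)$ so that the associated Moore--Penrose pseudoinverses need not remain bounded. The standard remedy is to extract a subsequence along which $\mbox{rank}(D_{u_k}E)$ is constant, to exploit smoothness of the map $u \mapsto D_uE$ in operator norm to control $\|D_{u_k}E(v)\| = \|(D_{u_k}E - D_{\bar{u}}E)v\| = O(\|u_k - \bar{u}\|)$, and to compare this rate with the smallest positive singular value of $D_{u_k}E$ in order to conclude that the orthogonal projection of $v$ onto $\mbox{Ker}(D_{u_k}E)$ converges to $v$. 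A second subtlety is that one must verify in Step~1 that the diagonal extraction produced by Proposition~\ref{PROPidem} can be performed while preserving both $|p_k|\to\infty$ and $\lambda_k p_k \to p$, which requires only that the perturbations in $T^*M$ are chosen summable in a suitably rescaled sense.
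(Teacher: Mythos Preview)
Your overall strategy—reduce to proximal subdifferentials via Proposition~\ref{PROPidem}, extract the second-order inequality from Remark~\ref{REM28oct}, rescale by $\lambda_k$, and pass to the limit—is natural, but the kernel approximation step you flag as the ``principal technical obstacle'' is a genuine gap, and the remedy you sketch does not close it.

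Concretely, suppose $D_{u_k}E$ is surjective for every $k$ while $D_{\bar{u}}E$ is not. For $v \in \mbox{Ker}(D_{\bar{u}}E)$ the orthogonal projection $v_k$ of $v$ onto $\mbox{Ker}(D_{u_k}E)$ satisfies $\|v_k - v\| \leq \|D_{u_k}E(v)\|/\sigma_{\min}(D_{u_k}E)$, and you hope the numerator vanishes faster. But there is no reason it should: with an orthonormal family $(e_i)$ in $L^2$, take $D_{u_k}E(w)=(\langle w,e_1\rangle,\tfrac{1}{k}\langle w,e_2\rangle)$, $D_{\bar{u}}E(w)=(\langle w,e_1\rangle,0)$, and $v=e_2$. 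Then $\|D_{u_k}E(v)\|=1/k$ and $\sigma_{\min}(D_{u_k}E)=1/k$, so $\|v_k-v\|=1$ for all $k$. The lower semicontinuity of $u \mapsto \mbox{Ker}(D_uE)$ you need can fail outright; note also that the theorem in Appendix~\ref{SECSecondOrder} is an openness result, not the Goh criterion—what you actually want to invoke is Remark~\ref{20oct1}.

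The paper circumvents this entirely by \emph{not} passing to the limit in the second-order inequality. Proposition~\ref{PROPGohNormal} is a uniform, quantitative statement: for every $\kappa>0$ there exist $\rho,\Lambda>0$ such that, for any $u$ with $\|u-\bar u\|_{L^2}<\rho$ and any Lagrange multiplier $(\bar p^u,\bar p_0^u)$ with $|\bar p^u|\ge\Lambda|\bar p_0^u|$ and with the Hessian form $(\bar p^u,-\bar p_0^u)\cdot D^2_uF$ of negative index $<N$ on $\mbox{Ker}(D_uF)$, the extremal $\psi^{u,\bar p^u}$ already satisfies the \emph{approximate} Goh bound $|p^{u,\bar p^u}(t)\cdot[X^i,X^j](\gamma^u(t))|\le\kappa|\bar p^u|$. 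The index hypothesis holds with $N=0$ at each $\tilde u_k$ directly from Remark~\ref{REM28oct}, and the norm hypothesis holds for large $k$ since $|\tilde p_k|\to\infty$ with $\bar p_0^u=1$. One then lets $k\to\infty$ \emph{after} the Goh-type inequality has been established along each $\gamma_k$; the only limit taken is of the (now uniformly controlled) rescaled extremals $\tilde\lambda_k\tilde\psi_k$, which requires no kernel matching. The proof of Proposition~\ref{PROPGohNormal} constructs, for each $u$, an explicit $(N+n+1)$-dimensional space of trigonometric test controls supported on a short interval, whose intersection with $\mbox{Ker}(D_uF)$ is automatically large enough regardless of the rank of $D_uE$; this is precisely what makes the argument uniform in $u$ and bypasses your obstacle.

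One could try to rescue your route by observing that it would suffice to obtain $p\cdot D^2_{\bar u}E\le 0$ on a \emph{finite-codimension} subspace of $\mbox{Ker}(D_{\bar u}E)$—then $\mbox{ind}_-(-p\cdot D^2_{\bar u}E|_{\mbox{Ker}(D_{\bar u}E)})<\infty$ and Remark~\ref{20oct1} applies to $-p$, hence to $p$ by homogeneity—and by arguing that the finite-dimensional subspaces $\mbox{Im}(D_{u_k}E^*)$ converge in a Grassmannian sense to furnish such a subspace. This is plausible but requires a separate argument you have not provided, and in any case is more delicate than the paper's direct quantitative route.
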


Let us fix $\bar{y}\neq x$ in $M$ and a minimizing geodesic $\bar{\gamma} : [0,1] \rightarrow M$ from $x$ to $\bar{y}$ and by considering all notations introduced in Section \ref{SECExtremals} (especially (\ref{10oct1})) write $\bar{\gamma}=\gamma^{\bar{u}}$ for some $\bar{u}\in U$. Proposition \ref{PROPGohLip} will be an easy consequence of the following result (we refer the reader to Appendix \ref{SECSecondOrder} for further details on negative indices of quadratic forms ($ \mbox{ind}_-$)): 

\begin{proposition}\label{PROPGohNormal}
Assume that $\|\bar{u}\|_{L^{\infty}}<L$ for some $L>0$. Then for every $\kappa>0$ and every $N\in \N$, there are $\rho, \Lambda>0$ such that the following property is satisfied: For every $u\in U$ such that
\begin{eqnarray}\label{HypGohNormal1}
\left\|u-\bar{u}\right\|_{L^2}< \rho, \quad \left\|u\right\|_{L^{\infty}}< 2L,
\end{eqnarray}
and every $\bar{p}^u \in T_y^*M$ and $\bar{p}_0^u \in \R$, with $y:=E(u)$ and $(\bar{p}^u,\bar{p}_0^u) \neq (0,0)$, satisfying 
\begin{eqnarray}\label{Lagrange2}
\bar{p}^u \cdot D_{u} E  = \bar{p}_0^u \, D_{u} C,
\end{eqnarray}
 together with
\begin{eqnarray}\label{HypGohNormal2}
 \left| \bar{p}^u\right|_y \geq \Lambda \, \left| \bar{p}_0^{u}\right|
\end{eqnarray}
and
\begin{eqnarray}\label{HypGohNormal3}
 \mbox{\em ind}_- \left( \left(\bar{p}^u,-\bar{p}_0^u\right) \cdot \left( D^2_{u} F \right)_{\vert \mbox{\em Ker} (D_{u}F)}  \right) < N,
\end{eqnarray}
the extremal $\psi^{u,\bar{p}^u} = (\gamma^u(t),p^{u,\bar{p}^u}(t)): [0,1] \rightarrow T^*M$ defined by (\ref{formulaextremal})  satisfies 
\begin{eqnarray}\label{AlmostGoh}
\left| p^{u,\bar{p}^u}(t) \cdot \left[ X^i,X^j\right] \left(\gamma^u(t)\right)\right|  \leq \kappa \left| \bar{p}^u\right|_y \qquad \forall t \in [0,1], \, \forall i,j = 1, \ldots, m.
\end{eqnarray}
\end{proposition}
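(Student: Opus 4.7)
The plan is to argue by contradiction, extracting in the limit an abnormal extremal along $\bar{\gamma}$ that fails the Goh condition, and then to construct a sufficiently large-dimensional subspace on which the second variation $(\bar{p}^u,-\bar{p}_0^u)\cdot D^2_u F$ is strongly negative, forcing the negative index to exceed $N$. Concretely, suppose the conclusion fails for some $\kappa_0>0$ and $N_0\in \N$. Then there are sequences $\rho_k \downarrow 0$, $\Lambda_k \to +\infty$, controls $u_k$ with $\|u_k-\bar{u}\|_{L^2}<\rho_k$ and $\|u_k\|_{L^\infty}<2L$, and multipliers $(\bar{p}^{u_k},\bar{p}_0^{u_k})$ satisfying (\ref{Lagrange2}), (\ref{HypGohNormal2}), (\ref{HypGohNormal3}), but violating (\ref{AlmostGoh}) at some $t_k\in[0,1]$ for some indices $(i_k,j_k)$. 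Normalizing $|\bar{p}^{u_k}|_{E(u_k)}=1$ gives $|\bar{p}_0^{u_k}|\le 1/\Lambda_k\to 0$. Up to subsequence, $u_k\to\bar{u}$ in $L^2$, $E(u_k)\to\bar{y}$, $\bar{p}^{u_k}\to\bar{p}_\infty$ with $|\bar{p}_\infty|_{\bar{y}}=1$, $t_k\to t_0$, and $(i_k,j_k)=(i_0,j_0)$. Passing to the limit in (\ref{Lagrange2}) and using the continuity of the propagators $S^u$ in $u$ yields an abnormal lift $\psi_\infty(t)=(\bar{\gamma}(t),p^\infty(t))$ of $\bar{\gamma}$, with $p^\infty$ defined by (\ref{formulaextremal}) from $\bar{u}$ and $\bar{p}_\infty$, verifying $|p^\infty(t_0)\cdot[X^{i_0},X^{j_0}](\bar{\gamma}(t_0))|\ge \kappa_0$; by continuity this persists on an open interval $J_0\subset[0,1]$ around $t_0$.

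Next, I would partition $J_0$ into $N_0+1$ disjoint subintervals $J_1,\ldots,J_{N_0+1}$ and, on each $J_l$, build a test variation $v^l\in L^2([0,1],\R^m)$ supported in $J_l$ along which $\bar{p}_\infty\cdot D^2_{\bar{u}}E(v^l,v^l)$ is strictly negative. The construction relies on the standard expansion of $\bar{p}\cdot D^2_uE$ in terms of Lie brackets: for a variation $v$ supported in a small subinterval with $\int v\,dt=0$, the leading contribution to $\bar{p}_\infty\cdot D^2_{\bar{u}}E(v,v)$ is a bilinear form whose coefficients are (up to an integration-by-parts) precisely the pairings $p^\infty(t)\cdot[X^i,X^j](\bar{\gamma}(t))$, and since these do not vanish in $J_0$ for $(i,j)=(i_0,j_0)$, a suitably chosen oscillatory profile $v^l$ concentrated on a single pair of coordinates produces a negative second variation of order $|J_l|$. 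I would then correct each $v^l$ by adding a small control (supported outside $J_0$, using surjectivity of $D_{\bar{u}}E$ onto some fixed complement and the nonvanishing of $D_{\bar u}C$) so as to enforce $D_{\bar u}E(v^l)=0$ and $D_{\bar u}C(v^l)=0$, without destroying the sign of the second variation. The disjointness of supports kills cross terms in the bracket expansion, so $(\bar{p}_\infty,0)\cdot D^2_{\bar{u}}F$ is negative definite on the linearly independent family $v^1,\ldots,v^{N_0+1}$.

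Finally, by continuity of $D^2_uF$ and of the projection onto $\ker D_uF$ as $(u,\bar{p}^u,\bar{p}_0^u)$ converges to $(\bar{u},\bar{p}_\infty,0)$ (using the $L^\infty$ bound on $u_k$), for $k$ large enough the quadratic form $(\bar{p}^{u_k},-\bar{p}_0^{u_k})\cdot D^2_{u_k}F$ restricted to the projection of $\mbox{Span}(v^1,\ldots,v^{N_0+1})$ into $\ker D_{u_k}F$ remains negative definite of dimension $N_0+1$, which gives $\mbox{ind}_-((\bar{p}^{u_k},-\bar{p}_0^{u_k})\cdot (D^2_{u_k}F)_{|\ker D_{u_k}F})\ge N_0+1$, contradicting (\ref{HypGohNormal3}). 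The main obstacle is the second paragraph: producing the $v^l$ with the correct negativity together with the two linear constraints $D_{\bar u}E(v^l)=0$ and $D_{\bar u}C(v^l)=0$, and verifying that the bracket term genuinely dominates the remainder in the expansion of $\bar{p}_\infty\cdot D^2_{\bar u}E$ on such localized oscillatory variations; this is the Agrachev–Sarychev mechanism identifying the Goh condition as a second-order necessary condition for abnormal minimality, and it will rely on the second-order openness machinery recalled in Appendix \ref{SECSecondOrder}.
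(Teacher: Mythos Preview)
Your overall strategy---violation of the Goh bound forces a large negative index for $(\bar p^u,-\bar p_0^u)\cdot D^2_uF$ via localized oscillatory test variations concentrated on the offending bracket---is exactly the Agrachev--Sarychev mechanism the paper uses. But your execution differs from the paper's in a way that introduces a real gap.

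The paper does \emph{not} pass to a limit. It proves a uniform quantitative lemma (Lemma~\ref{LEM8sept}): for every $u$ with $\|u-\bar u\|_{L^2}<\rho$ and $\|u\|_{L^\infty}<2L$, and every small interval $[\bar t,\bar t+\delta]$, the form $\bar p^u\cdot D^2_uE(v)$ is $\delta^2$-close (with a uniform constant) to an explicit bracket quadratic form $Q^{u,\bar p^u}_{\bar t,\delta}$ on variations supported in $[\bar t,\bar t+\delta]\cap\ker D_uE$. Then, directly at the given $u$, it builds a space $L$ of trigonometric test functions of dimension $\bar N=N+n+1$ on a \emph{single} small interval, computes $Q$ on $L$ explicitly, and gets strict negativity of $(\bar p^u,-\bar p_0^u)\cdot D^2_uF$ on all of $L$ once $\delta$ is small and $\Lambda$ is large. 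The key closing step is a codimension count: since $\ker D_uF$ has codimension at most $n+1$, the intersection $L\cap\ker D_uF$ has dimension at least $N$, contradicting~(\ref{HypGohNormal3}). No projection, no continuity of kernels, no limiting argument.

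Your route instead passes to $\bar u$ via compactness, builds only $N_0+1$ test directions in $\ker D_{\bar u}F$, and then tries to transfer them to $\ker D_{u_k}F$ by ``continuity of the projection onto $\ker D_uF$''. This last step is the gap: the orthogonal projection onto $\ker D_uF$ is \emph{not} continuous in $u$ in general, because the rank of $D_uE$ can jump up near a singular $\bar u$, so the smallest nonzero singular value of $D_{u_k}F$ need not be bounded below and the correction putting $v^l$ into $\ker D_{u_k}F$ can be large. Relatedly, your correction at $\bar u$ invokes ``surjectivity of $D_{\bar u}E$ onto some fixed complement'', but $D_{\bar u}E$ may well fail to be surjective (that is precisely the singular case). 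The fix is to imitate the paper's codimension trick: build $N_0+n+2$ (not $N_0+1$) independent negative directions---either on disjoint subintervals as you suggest, or, more economically, as $N_0+n+2$ trigonometric modes on one small interval---without attempting to place them in the kernel, and then intersect with $\ker D_{u_k}F$. Finally, Appendix~\ref{SECSecondOrder} is not what is used here; it is a second-order openness theorem applied elsewhere (Lemma~\ref{STEP2_LEM3}), whereas the present proof is a direct index computation.
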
 
 
\begin{proof}[Proof of Proposition \ref{PROPGohNormal}]
The formula of $D_uE$ has been recalled in (\ref{dE}) and the quadratics forms defined respectively by the second order differentials of $C$ and $E$ are given by  
\begin{eqnarray}\label{D2uC}
D_{u}^2C(v)=  \|v\|_{L^2}^2 \qquad \forall v \in L^2( [0,1],\R^m)
\end{eqnarray}
and
\begin{eqnarray}\label{d2uE}
D_{u}^2 E(v) =  2 \int_0^1 S^u(1) S^u(t)^{-1} \left[ A^u_v(t)+D^u_v(t)\right] \,   dt \qquad \forall v \in L^2( [0,1],\R^m),
\end{eqnarray}
where for every $v\in L^2([0,1],\R^m)$ the functions $A^u_v, D^u_v:[0,1] \rightarrow M_n(\R)$ are defined for almost every $t\in [0,1]$ by
\begin{eqnarray}\label{5avril6}
A^u_v(t) :=  \sum_{i=1}^m v_i(t) D_{\gamma^u(t)} X^i  \left( \delta_v^1(t)\right), \quad D_v^u(t) := \frac{1}{2} \sum_{i=1}^m u_i(t) D^2 _{\gamma^u(t)} X^i  \left( \delta_v^1(t)\right)
\end{eqnarray}
with
\begin{eqnarray}\label{5avril7}
\delta_v^1(t) := \int_0^t S^u(t) S^u(s)^{-1} B^u(s) v(s)\, ds.
\end{eqnarray}
We refer the reader to \cite{riffordbook} for the above formulas. The proof of Proposition \ref{PROPGohNormal} will follow from the following lemma (compare \cite[Lemma 2.21 p. 63]{riffordbook}): 

\begin{lemma}\label{LEM8sept}
There are $\rho, K>0$ such that for any $u\in U$ with $\|u-\bar{u}\|_{L^2}< \rho$, $\|u\|_{L^{\infty}}< 2L$ and any $\bar{t}, \delta>0$ with $[\bar{t},\bar{t}+\delta]\subset [0,1]$, the following property holds: For every $v\in \mbox{\em Ker} (D_{u} E)$ with $\mbox{\em Supp} (v)\in  [\bar{t},\bar{t}+\delta]$ and every $\bar{p}^u\in T_y^*M$, we have
\begin{eqnarray}\label{8sept3}
\left| \bar{p}^u \cdot D_{u}^2 E (v)  - Q^{u,\bar{p}^u}_{\bar{t},\delta} (v) \right| \leq K \, \|v\|_{L^2}^2 \, \left|\bar{p}^u\right|_{y} \,  \delta^2, 
\end{eqnarray}
where $Q^{u,\bar{p}^u}_{\bar{t},\delta}: L^2 \left( [0,1],\R^m\right) \rightarrow \R^n$ is the quadratic form defined by  
\begin{eqnarray}\label{8septQ}
Q^{u,\bar{p}^u}_{\bar{t},\delta}(v) :=  \int_{\bar{t}}^{\bar{t}+\delta} \int_{\bar{t}}^t        \langle v(s), M_{\bar{t}}^{u,\bar{p}^u}v(t)\rangle \, ds \, dt \qquad \forall v\in L^2 \left( [0,1],\R^m\right)
\end{eqnarray}
with
\begin{eqnarray}\label{27avril1}
\left(  M_{\bar{t}}^{u,\bar{p}^u}\right)_{i,j} = 2 \, p^{u,\bar{p}^u} (\bar{t}) \cdot   D_{\gamma^u(\bar{t})} X^i \left( X^j\bigl(\gamma^u(\bar{t}) \bigr)\right) \qquad \forall i,j =1, \ldots,m.
\end{eqnarray}
\end{lemma}

\begin{proof}[Proof of Lemma \ref{LEM8sept}]
Let $\bar{t}, \delta>0$ with $[\bar{t},\bar{t}+\delta]\subset [0,1]$ and $v\in \mbox{Ker} (D_{u} E)$ with $\mbox{Supp} (v)\in  [\bar{t},\bar{t}+\delta]$ be fixed. By (\ref{formulaextremal}) and (\ref{d2uE}), we have
\begin{eqnarray}\label{6avril33}
  \bar{p}^u \cdot \left( D^2_{u} E \right)_{\vert \mbox{Ker} (D_{u} E)}  (v) = 2  \int_0^1 p^{u,\bar{p}^u}(t) \cdot \left[ A^u_v(t)+ D^u_v(t)\right] \,   dt.
\end{eqnarray}
Since $v\in \mbox{Ker} (D_{u} E)$ and $\mbox{Supp} (v)\in  [\bar{t},\bar{t}+\delta]$, we have $\delta_v^1(t)=0$ for every $t\in [0,\bar{t}] \cup [\bar{t}+\delta,1]$ and by Cauchy-Schwarz's inequality, we have for every $t\in [\bar{t},\bar{t}+\delta]$,
\begin{eqnarray*}
\left| \bar{\delta}_v^1(t) \right| \leq  \sup_{s \in [0,1]} \Bigl\{ \bigl\|  S^u(t)S^u(s)^{-1}B^u(s)     \bigr\| \Bigr\} \,  \sqrt{t-\bar{t}} \, \|v\|_{L^2} \leq K_1  \,  \|v\|_{L^2} \, \sqrt{\delta},
\end{eqnarray*}
where $K_1$ is a constant depending only upon the sizes of $S^u, (S^u)^{-1}, B^u$ for $u$ close to $\bar{u}$. Then we have 
 $$
A^u_v(t)= D^u_v(t)=0 \qquad \forall t \in  t \in \bigl[0,\bar{t}\bigr] \cup \bigl[\bar{t}+\delta,1\bigr],
 $$
 and
 $$
 \bigl| D^u_v (t) \bigr| \leq K_2  \,  \|v\|_{L^2}^2 \, \bigl\| u\bigr\|_{L^{\infty}} \, \delta \qquad \forall t\in \bigl[\bar{t},\bar{t}+\delta\bigr],
 $$
 which gives
\begin{eqnarray}\label{6avril1}
 \left| \int_0^1\bar{p}^{u,\bar{p}^u}(t) \cdot D^u_v(t)\, dt  \right| \leq K_3 \, \|v\|_{L^2}^2  \left|\bar{p}^u\right|_y \, \delta^{2} ,
\end{eqnarray}
where $K_2, K_3$ are some constants depending on $K_1$, on the size of the $D^2X^j$'s and $L$ (remember that $\|u\|_{L^{\infty}}<2L$).  Note that since we can write for every $t \in[\bar{t},\bar{t}+\delta]$,
\begin{eqnarray*}
& \quad & \delta_v^1 (t) -\int_{\bar{t}}^t \sum_{j=1}^m v_j(s) X^j  \bigl(\gamma^u(\bar{t})\bigr) \, ds \\
& = & \delta_v^1 (t) -\int_{\bar{t}}^t \sum_{j=1}^m v_j(s) X^j  \bigl(\gamma^u(s)\bigr) \, ds + \int_{\bar{t}}^t \sum_{j=1}^m v_j(s) \left[ X^j  \bigl(\gamma^u(s)\bigr) - X^j  \bigl(\gamma^u(\bar{t})\bigr)\right]  \, ds \\
& = & \int_0^t S^u(t) S^u(s)^{-1} B^u(s) v(s) - B^u(s) v(s) \, ds  \\
& \quad & \qquad \qquad \qquad \qquad \qquad \qquad + \int_{\bar{t}}^t \sum_{j=1}^m v_j(s) \left[ X^j  \bigl(\gamma^u(s)\bigr) - X^j  \bigl(\gamma^u(\bar{t})\bigr)\right]  \, ds \\
& = & \int_{\bar{t}}^t \left( S^u(t) -S^u(s) \right) S^u(s)^{-1} B^u(s) v(s) \, ds  \\
& \quad & \qquad \qquad \qquad \qquad \qquad \qquad + \int_{\bar{t}}^t \sum_{j=1}^m v_j(s) \left[ X^j  \bigl(\gamma^u(s)\bigr) - X^j  \bigl(\gamma^u(\bar{t})\bigr)\right]  \, ds,
 \end{eqnarray*}
we have (since $\|u\|_{L^{\infty}}< 2L$, $S^u$ and $\gamma^u$ are both Lipschitz)
\begin{multline}\label{10sept4}
\left| \delta_v^1 (t) -\int_{\bar{t}}^t \sum_{j=1}^m v_j(s) X^j \bigl(\gamma^u(\bar{t})\bigr) \,ds \right| \leq K_4  \, \|v\|_{L^2} \, \delta^{\frac{3}{2}}, \quad \forall   t \in \bigl[\bar{t},\bar{t}+\delta\bigr],
\end{multline} 
where $K_4$ is a constant depending only upon the sizes of $S^u, (S^u)^{-1}, B^u$ (for $u$ close to $\bar{u}$), upon the Lipschitz constants of the $X^j$'s in a neighborhood of the curve $\bar{\gamma}$ and upon $L$. Moreover, by noting that $Q^{u,\bar{p}^u}_{\bar{t},\delta}$ satisfies for every $v\in L^2([0,1],\R^m)$ (by (\ref{8septQ})-(\ref{27avril1})),
\[
Q^{u,\bar{p}^u}_{\bar{t},\delta} = 2 \int_{\bar{t}}^{\bar{t}+\delta} p^{u,\bar{p}^u}(t) \cdot  \sum_{i=1}^m v_i(t) D_{\gamma^u(\bar{t})} X^i  \left( \int_{\bar{t}}^t \sum_{j=1}^m v_j(s) X^j(\gamma^u(\bar{t})) \, ds \right) \, dt
\]
the first equality in (\ref{5avril6}) gives
\begin{eqnarray*}
& \quad & 2  \int_{0}^{1} p^{u,\bar{p}^u}(t) \cdot A^u_v(t)\, dt - Q^{u,\bar{p}^u}_{\bar{t},\delta}(v) \\
& = & 2 \int_{\bar{t}}^{\bar{t}+\delta} p^{u,\bar{p}^u}(t) \cdot  \left( \sum_{i=1}^m v_i(t) D_{\gamma^u(t)} X^i \left[\delta_v^1(t)\right] \right. \\
& \quad & \qquad \qquad \qquad \qquad \left. - \sum_{i=1}^m v_i(t) D_{\gamma^u(\bar{t})} X^i  \left[ \int_{\bar{t}}^t \sum_{j=1}^m v_j(s) X^j\bigl(\gamma^u(\bar{t}) \bigr) \, ds \right] \right) \, dt \\
& = & 2 \int_{\bar{t}}^{\bar{t}+\delta} p^{u,\bar{p}^u}(t) \cdot  \left( \sum_{i=1}^m v_i(t) D_{\bar{\gamma}(t)} X^i  \right) \left[ \delta_v^1(t)  - \int_{\bar{t}}^t \sum_{j=1}^m v_j(s) X^j\bigl(\gamma^u(\bar{t}) \bigr) \, ds \right] \, dt.
\end{eqnarray*}
By (\ref{10sept4}), we infer that 
$$
\left| 2  \int_{0}^{1} p^{u,\bar{p}^u}(t) \cdot A^u_v(t)\, dt - Q^{u,\bar{p}^u}_{\bar{t},\delta}(v) \right|  \leq K_5 \, \|v\|_{L^2}^2 \, \left|\bar{p}^u\right|_{y} \, \delta^{2},
$$
for some constant $K_5$ depending on the datas, which by (\ref{6avril1}) gives 
$$
\left| 2  \int_0^1 p^{u,\bar{p}^u}(t) \cdot \left[ A^u_v(t)+ D^u_v(t)\right] \,   dt  - Q^{u,\bar{p}^u}_{\bar{t},\delta}(v) \right| \leq K_6  \, \|v\|_{L^2}^2 \, \left|\bar{p}^u\right|_{y} \,  \delta^{2},
$$
for some constant $K_6$ depending on the datas. We conclude easily by (\ref{6avril33}).
\end{proof}

Returning to the proof of Proposition \ref{PROPGohNormal}, we fix $\kappa>0$, $N\in \N$, $u\in U$ with $\|u-\bar{u}\|_{L^2}< \rho$, $\|u\|_{L^{\infty}}< 2L$ and we suppose for contradiction that there are $\bar{t} \in (0,1)$ and $\bar{i}\neq \bar{j} \in \{1,\cdots ,m\}$ such that 
\begin{eqnarray}\label{27oct1}
P_{\bar{i},\bar{j}} (\bar{t}) & := &  \frac{ p^{u,\bar{p}^{u}}( \bar{t})}{|\bar{p}^u|_{y}} \cdot \left[X^{\bar{i}},X^{\bar{j}}\right] \bigl(\gamma^u(\bar{t}) \bigr) > \kappa.
\end{eqnarray}
Let us consider $\delta>0$ with $[\bar{t},\bar{t}+\delta] \subset [0,1]$ to be chosen later, we note that we have for every $v \in L^2\bigl([0,1],\R^m\bigr)$,
\begin{eqnarray}\label{EQ27avril}
Q^{u,\bar{p}^u}_{\bar{t},\delta}(v) & = & \int_{\bar{t}}^{\bar{t}+\delta} \int_{\bar{t}}^t   \langle v(s), M_{\bar{t}}^{u,\bar{p}^u}v(t)\rangle \, ds \, dt \nonumber\\
& = & \int_{\bar{t}}^{\bar{t}+\delta} \langle w^v(t), M_{\bar{t}}^{u,\bar{p}^u}v(t)\rangle \, dt  \nonumber\\
& = & \int_{\bar{t}}^{\bar{t}+\delta} \sum_{i,j=1}^m w^v_i(t) \left( M_{\bar{t}}^{u,\bar{p}^u}\right)_{i,j} v_j(t) \, dt,
\end{eqnarray}
with
\[
w^v(t) := \int_{\bar{t}}^t v(s) \, ds \qquad \forall t \in [\bar{t},\bar{t}+\delta].
\]
Set $\bar{N}:=N+n+1$  and denote by $L= L_{\bar{t},\delta}$ the vector space in $L^2\bigl( [0,1],\R^m\bigr)$ of all controls $v$ for which there is a sequence $\{a_1, \ldots, a_{\bar{N}}\}$ such that 
\[
\left\{
\begin{array}{rcl}
v_{\bar{i}}(t) & = & \sum_{k=1}^{\bar{N}} a_k \cos \left(k  \frac{(t-\bar{t}) 2\pi}{\delta}\right) \\
 v_{\bar{j}}(t) & = & \sum_{k=1}^{\bar{N}} a_k \sin \left(k  \frac{(t-\bar{t}) 2\pi}{\delta}\right)
 \end{array}
 \right.
 \qquad \forall t \in \bigl[ \bar{t},\bar{t}+\delta\bigr],
\]
\[
\mbox{and} \quad v_i(t) =0, \quad \forall i\neq \bar{i},\bar{j} \qquad \forall t \in [0,1].
\]
Then, we have for every $v\in L$,
\[
\left\{
\begin{array}{rcl}
w^v_{\bar{i}}(t) & = & \frac{\delta}{2\pi} \, \sum_{k=1}^{\bar{N}} \frac{a_k}{k}  \sin \left(k  \frac{(t-\bar{t}) 2\pi}{\delta}\right) \\
w^v_{\bar{j}}(t) & = &  \frac{\delta}{2\pi} \,  \sum_{k=1}^{\bar{N}} \frac{a_k}{k}   \Bigl(1 -   \cos \left(k  \frac{(t-\bar{t}) 2\pi}{\delta}\right) \Bigr),
\end{array}
 \right.
 \qquad \forall t \in \bigl[ \bar{t},\bar{t}+\delta\bigr],
\]
\[
\mbox{and} \quad w^v_i(t) =0, \quad \forall i\neq \bar{i},\bar{j} \qquad \forall t \in [0,1],
\]
which gives
 \[
\int_{\bar{t}}^{\bar{t}+\delta} w_{\bar{i}}^v(t) v_{\bar{j}}(t) \, dt = \sum_{k=1}^{\bar{N}} \frac{\delta^2 a_k^2}{4 \pi k}, \quad 
\int_{\bar{t}}^{\bar{t}+\delta} w_{\bar{j}}^v(t) v_{\bar{i}}(t) \, dt = - \sum_{k=1}^{\bar{N}} \frac{\delta^2 a_k^2}{4 \pi k}
\]
and
\[
\int_{\bar{t}}^{\bar{t}+\delta}  w_{\bar{i}}^v (t) v_{\bar{i}}(t) \, dt = \int_{\bar{t}}^{\bar{t}+\delta}  w_{\bar{j}}^v (t)  v_{\bar{j}}(t) \, dt=0.
\]
In conclusion, by (\ref{EQ27avril}) and by noting that
\[
P_{\bar{i},\bar{j}} (\bar{t})  =  \frac{1}{2|\bar{p}^u|_{y}} \cdot \left( \left(  M_{\bar{t}}^{u,\bar{p}^u}\right)_{\bar{j},\bar{i}} -  \left(  M_{\bar{t}}^{u,\bar{p}^u}\right)_{\bar{i},\bar{j}} \right)
\]
we infer that
\begin{eqnarray}\label{1oct1}
Q^{u,\bar{p}^u}_{\bar{t},\delta}(v)
& = &  \int_{\bar{t}}^{\bar{t}+\delta}  w^v_{\bar{i}}(t) \left( M_{\bar{t}}^{u,\bar{p}^u}\right)_{\bar{i},\bar{j}} v_{\bar{j}}(t) +  w^v_{\bar{j}}(t) \left( M_{\bar{t}}^{u,\bar{p}^u}\right)_{\bar{j},\bar{i}} v_{\bar{i}}(t) \, dt\nonumber\\
 &  = &  - \sum_{k=1}^{\bar{N}} \frac{\delta^2 a_k^2|\bar{p}^u|_{y}}{2 \pi k}  P_{\bar{i},\bar{j}} (\bar{t}),
\end{eqnarray}
where we have
\begin{eqnarray}\label{1oct2}
\|v\|_{L^2}^2 =  \delta  \sum_{k=1}^{\bar{N}} a_k^2.
\end{eqnarray}
Let us now distinguish between the cases $\bar{p}_0^u=0$ and $\bar{p}_0^u\neq0$.\\

\noindent Case 1: $\bar{p}_0^u=0$.\\
If $\delta>0$ is small enough then (\ref{8sept3}), (\ref{1oct1}) and (\ref{1oct2}) imply that 
\[
 \bar{p}^u \cdot D_{u}^2 E (v) < 0 \qquad \forall v \in L\setminus\{0\},
\]
where the vector space $L\subset L^2\bigl([0,1],\R^m\bigr)$ has dimension $\bar{N}=N+n+1$. Since $\bar{p}_0^{u}=0$ and $\mbox{codim}(\mbox{Ker} (D_{u}E))\leq n$, this contradicts (\ref{HypGohNormal3}). \\

\noindent Case 2: $\bar{p}_0^{u}\neq 0$.\\
By (\ref{27oct1}), (\ref{1oct1}) and (\ref{1oct2}) and by noticing that
\[
\sum_{k=1}^{\bar{N}} a_k^2 \leq \bar{N} \sum_{k=1}^{\bar{N}} \frac{a_k^2}{k}, 
\]
we note that we have for every $v\in L\setminus \{0\}$,
\[
\frac{Q^{u,\bar{p}^u}_{\bar{t},\delta}(v) - \bar{p}_0^u  \|v\|_{L^2}^2}{\|v\|_{L^2}^2 |\bar{p}^u|_{y} \delta^2} = \frac{- P_{\bar{i},\bar{j}}  \bigl( \bar{t}\bigr) }{2\pi}    \frac{\sum_{k=1}^{\bar{N}} \frac{a_k^2}{k}}{ \delta  \sum_{k=1}^{\bar{N}} a_k^2} - \frac{\bar{p}_0^u}{|\bar{p}^u|_{y} \delta^2} \leq -\frac{\kappa}{2\pi\bar{N}\delta} + \frac{\left|\bar{p}_0^u\right|}{|\bar{p}^u|_{y} \delta^2}.
\]
Take $\delta>0$ small such that
\[
- \frac{\kappa}{2\pi \bar{N}\delta} +K <0
\]
and suppose that 
\[
\frac{\left|\bar{p}_0^u\right|}{|\bar{p}^u|_{y} \delta^2}< \left| - \frac{\kappa }{2\pi \bar{N}\delta} +K\right| \quad \Leftrightarrow \quad |\bar{p}^u|_y > \left| K\delta^2 - \frac{\kappa \delta}{2\pi \bar{N}}  \right|^{-1} \, \left| \bar{p}_0^u\right|.
\]
Then by  (\ref{8sept3}) and the above inequality, since $D^2_uC=\|\cdot\|_{L^2}$, we infer that
\[
 \bar{p}^u \cdot D_{u}^2 E (v) - \bar{p}_0^u D_u^2C(v)< 0 \qquad \forall v \in L\setminus\{0\},
\]
where the vector space $L\subset L^2\bigl([0,1],\R^m\bigr)$ has dimension $\bar{N}=N+n+1$. Since $\mbox{codim}(\mbox{Ker} (D_{u}F))\leq n+1$, this contradicts (\ref{HypGohNormal3}).
\end{proof}

\begin{remark}\label{20oct1}
We note that in the case where $u=\bar{u}$ and $\bar{p}_0^u=0$, Proposition \ref{PROPGohNormal} asserts that if we have 
\begin{eqnarray*}
 \mbox{\em ind}_- \left( \bar{p}^{\bar{u}} \cdot \left( D^2_{\bar{u}} E \right)_{\vert \mbox{\em Ker} (D_{\bar{u}}E)}  \right) < +\infty
\end{eqnarray*}
for some $ \bar{p}^{\bar{u}}\neq 0$ such that $\bar{p}^{\bar{u}} \cdot D_{\bar{u}} E  =0$, that is $\bar{p}^{\bar{u}}\in \mbox{Im}(D_{\bar{u}}E)^{\perp}$, then the corresponding abnormal extremal $\psi^{\bar{u},\bar{p}^{\bar{u}}}$ satisfies the Goh condition. This is the classical Goh Condition for Abnormal Geodesics due to  Agrachev and Sarychev, see \cite[Proposition 3.6 p. 389]{as99} and \cite[Theorem 2.20 p. 61]{riffordbook}.
\end{remark}

\begin{proof}[Proof of Proposition \ref{PROPGohLip}]
Let $x\in M$, $y\in M \setminus \mbox{Lip}(f_x) $, $p \in \partial^-_{PL}f_x(y)$, two sequences $\{(y_k,p_k)\}_{k\in \N}$ in $T^*M$ and $\{\lambda_k\}_{k\in \N}$ in $(0,+\infty)$ satisfying
\[
\lim_{k\rightarrow +\infty} \left|p_k\right|_{y_k} = +\infty, \quad \lim_{k\rightarrow +\infty} \left( y_k,\lambda_k p_k\right) = (y,p)
\quad \mbox{and} \quad p_k \in \partial^-f_x(y_k) \quad \forall k \in \N,
\]
and  $\gamma :[0,1] \rightarrow M$ be an horizontal minimizing path from $x$ to $y$ obtained as the uniform limit of a subsequence of the sequence of minimizing geodesics $\{\gamma_k\}_{k\in \N}$ given by the projections of the normal extremals  $\psi_k:[0,1] \rightarrow T^*M$ verifying $\psi_k(1)=(y_k,p_k)$. By Proposition \ref{PROPidem}, we may indeed assume that there are two sequences $\{(\tilde{y}_k,\tilde{p}_k)\}_{k\in \N}$ in $T^*M$ and $\{\tilde{\lambda}_k\}_{k\in \N}$ in $(0,+\infty)$ satisfying
\[
\lim_{k\rightarrow +\infty} \left| \tilde{p}_k\right|_{\tilde{y}_k} = +\infty, \quad \lim_{k\rightarrow +\infty} \left( \tilde{y}_k,\tilde{\lambda}_k \tilde{p}_k\right) = (y,p)
\quad \mbox{and} \quad \tilde{p}_k \in \partial^-_Pf_x(\tilde{y}_k) \quad \forall k \in \N,
\]
and that $\gamma :[0,1] \rightarrow M$ is the uniform limit of a subsequence of the sequence of minimizing geodesics $\{\tilde{\gamma}_k\}_{k\in \N}$ given by the projections of the normal extremals  $\tilde{\psi}_k:[0,1] \rightarrow T^*M$ verifying $\tilde{\psi}_k(1)=(\tilde{y}_k,\tilde{p}_k)$.
Assume as in Section \ref{SECExtremals} that $\gamma=\gamma ^{\bar{u}}$ for some $\bar{u}$ in an open set $U$ of $L^2([0,1],\R^m)$ where the end-point mapping $E$ is well-defined and smooth. Then, for $k$ large enough there is $\tilde{u}_k \in U$ such that $\tilde{\gamma}_k=\bar{\gamma}^{\tilde{u}_k}$ and moreover we can without loss of generality parametrize $\bar{\gamma}^{\tilde{u}_k}$ by arc-length and assume that $|\tilde{u}_k(t)|=d_{SR}(x,\tilde{y}_k)$ almost everywhere in $[0,1]$, so that $\|\tilde{u}_k\|_{\infty}<\infty$. By Remark \ref{REM28oct}, we have 
\[
 D^2_{\tilde{u}_k}C (v) - \tilde{p}_k \cdot D^2_{\tilde{u}_k}E\geq 0 \qquad \forall v \in \mbox{Ker} (D_{\tilde{u}_k} E)
\]
so we have 
\[
\mbox{ind}_- \left( \left( -\tilde{p}_k,1\right) \cdot \left( D^2_{\tilde{u}_k} F \right)_{\vert \mbox{Ker} (D_{\tilde{u}_k} F)}  \right) =0.
\]
Therefore, by Proposition \ref{PROPGohNormal}, we have 
\[
\lim_{k \rightarrow +\infty} \frac{ \tilde{\psi}_k(t) }{ |\tilde{p}_k|_{\tilde{y}_k} } \cdot \left[ X^i,X^j\right] \left(\tilde{\gamma}_k(t)\right) = 0.
\]
By a compactness argument, we conclude that, up to a subsequence, the sequence of extremals $\{\tilde{\lambda}_k\tilde{\psi}_k \}_k$ converges to an abnormal extremal satisfying the required property (see {\it e.g.} \cite[Lemma 4.8]{trelat00}). 
\end{proof}

\subsection{Characterizations of the Minimizing Sard Conjecture}

We are now ready to give several different characterizations of the Minimizing Sard Conjecture (recall that $f_x$ has been defined in (\ref{19oct1})). 

\begin{proposition}\label{PROPcharac}
For every $x\in M$, the following properties are equivalent:
\begin{itemize}
\item[(i)] The closed set $\mbox{Abn}^{min}(x)$ has Lebesgue measure zero in $M$.
\item[(ii)] The closed set $\mbox{Goh-Abn}^{min}(x)$ has Lebesgue measure zero in $M$.
\item[(iii)] For almost every $y\in M$, $\partial^-_{PL}f_x(y)= \emptyset$. 
\item[(iv)] For almost every $y\in M$, $\partial^-f_x(y)\neq \emptyset$. 
\item[(v)] The set $\mbox{Lip}^-(f_x)$ has full Lebesgue measure in $M$. 
\item[(vi)] The function $d_x$ is differentiable almost everywhere in $M$. 
\item[(vii)] The open set $\mbox{Lip}\,(f_x)$ has full Lebesgue measure in $M$.
\item[(viii)] For almost every $y\in M$, $\partial^-_Pf_x(y)\neq \emptyset$. 
\item[(ix)] The function $f_x$ is smooth on an open subset of $M$ of full Lebesgue measure in $M$. 
\end{itemize}
\end{proposition}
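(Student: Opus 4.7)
The plan is to establish the nine equivalences by a network of implications, pivoting on conditions (vii) and (iv), and relying on the results already proved in Section \ref{SEC3} together with Sard's theorem applied to the exponential mapping.

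First I would dispatch the easy implications in sequence. The chain (i) $\Rightarrow$ (ii) $\Rightarrow$ (vii) $\Rightarrow$ (vi) $\Rightarrow$ (iv) uses, in order: the trivial inclusion $\mbox{Goh-Abn}^{min}(x) \subset \mbox{Abn}^{min}(x)$; Proposition \ref{PROPGohLip} (which gives $M\setminus \mbox{Lip}(f_x) \subset \mbox{Goh-Abn}^{min}(x)$, after noting that $\mbox{Lip}(d_{SR}^x) = \mbox{Lip}(f_x)$ on $M\setminus\{x\}$); Rademacher's theorem on the open set $\mbox{Lip}(f_x)$; and the fact that $df_x(y) \in \partial^- f_x(y)$ at any differentiability point. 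In parallel, (vii) $\Leftrightarrow$ (iii) follows from Proposition \ref{PROPLip-PL}; (vii) $\Rightarrow$ (v) is trivial since $\mbox{Lip}(f_x) \subset \mbox{Lip}^-(f_x)$, and Proposition \ref{PROPLip-Sub} gives (v) $\Rightarrow$ (iv); finally the chain (ix) $\Rightarrow$ (viii) $\Rightarrow$ (iv) comes from $df_x(y) \in \partial^-_P f_x(y) \subset \partial^- f_x(y)$ at points where $f_x$ is smooth (resp.\ $C^2$).

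The central substantive step is (iv) $\Rightarrow$ (i). For every $y$ with $\partial^- f_x(y) \neq \emptyset$ I pick any $p \in \partial^- f_x(y)$ and apply Proposition \ref{PROPsub1}: the unique minimizing geodesic from $x$ to $y$ is then the projection of the normal extremal $\psi:[0,1] \rightarrow T^*M$ with $\psi(1) = (y,p)$, and provided $\psi(0)$ is not a critical point of $\exp_x$, the point $y$ does not belong to $\mbox{Abn}^{min}(x)$. Sard's theorem applied to the smooth map $\exp_x: T_x^*M \rightarrow M$ (globally defined by completeness) guarantees that the set of critical values of $\exp_x$ has Lebesgue measure zero in $M$. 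Hence
\[
\mbox{Abn}^{min}(x) \subset \bigl\{ y\in M : \partial^- f_x(y) = \emptyset \bigr\} \cup \bigl\{ \mbox{critical values of } \exp_x\bigr\},
\]
and both sets on the right are null under hypothesis (iv).

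To close the loop at (ix), I would prove (vii) $\Rightarrow$ (ix) by introducing the open set $\mathcal{O}$ of points at which $f_x$ is $C^{\infty}$ on a neighborhood, and showing that $\mathcal{O}$ has full Lebesgue measure. By Rademacher, (vii), Proposition \ref{PROPsub1} and Sard, almost every $y\in M$ is a differentiability point of $f_x$ at which the unique minimizing geodesic from $x$ to $y$ is the projection of a normal extremal $\psi$ with $\psi(0)$ a regular point of $\exp_x$. A compactness-and-uniqueness argument, using completeness of $(M,d_{SR})$, then shows that for $y'$ in a small neighborhood of such $y$ the minimizing geodesic from $x$ to $y'$ is still unique, and its initial covector is the image of $y'$ under the local smooth inverse of $\exp_x$ near $\psi(0)$; consequently $f_x(y') = \tfrac{1}{2} |\exp_x^{-1}(y')|^2$ is smooth on a neighborhood of $y$, so $y\in\mathcal{O}$. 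The main obstacle is precisely this last step, where local smoothness on an open set must be deduced from almost-everywhere pointwise differentiability by ruling out the bifurcation of minimizers near $y$; all other implications follow cleanly from Sard's theorem together with the earlier propositions of this section.
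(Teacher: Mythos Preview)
Your proposal is correct and follows essentially the same network of implications as the paper: the chains (i) $\Rightarrow$ (ii) $\Rightarrow$ (vii), (vii) $\Rightarrow$ (vi) $\Rightarrow$ (iv), (vii) $\Leftrightarrow$ (iii), (vii) $\Rightarrow$ (v) $\Rightarrow$ (iv), (ix) $\Rightarrow$ (viii) $\Rightarrow$ (iv), and the key step (iv) $\Rightarrow$ (i) via Proposition~\ref{PROPsub1} and Sard's theorem are all exactly as in the paper.

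The only difference concerns the closure at (ix). The paper does not argue (vii) $\Rightarrow$ (ix) but instead records (i) $\Rightarrow$ (ix) by citing \cite[\S 2.1]{br20}. Your direct sketch of (vii) $\Rightarrow$ (ix) is the standard argument that underlies that reference, and the ``obstacle'' you flag is not a genuine gap: once the unique minimizer at $y$ is non-singular with regular initial covector $p_0$, non-singularity is an open condition on controls, so any minimizing $\gamma_k$ from $x$ to $y_k\to y$ is eventually non-singular and hence the projection of a (unique) normal extremal; the Lagrange-multiplier equation $p\cdot D_uE = D_uC$ then determines the final covector continuously in $u$, forcing the initial covectors to converge to $p_0$ and hence to lie in the domain of the local inverse of $\exp_x$. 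This yields $f_x(y_k)=H(x,(\exp_x)^{-1}(y_k))$ for $k$ large, and smoothness on a neighborhood follows. So your route is slightly more self-contained than the paper's, at the cost of spelling out an argument the paper outsources.
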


\begin{proof}[Proof of Proposition \ref{PROPcharac}]
The implications (ix) $\Rightarrow$ (viii) and (ix) $\Rightarrow$ (vii) are immediate, (viii) $\Rightarrow$ (iv) follows from the inclusion $\partial^-_Pf_x\subset \partial^-f_x$, (vii) $\Rightarrow$ (vi) is a consequence of Rademacher's Theorem, (vii) $\Rightarrow$ (v) follows from the inclusion $\mbox{Lip}(d_x) \subset \mbox{Lip}^-(d_x)$, (vii) $\Leftrightarrow$ (iii) is a consequence of Proposition \ref{PROPLip-PL}, (v) $\Rightarrow$ (iv) is a corollary of Proposition \ref{PROPLip-Sub}, and (vi) $\Rightarrow$ (iv) follows by definition of $\partial^-d_x$. Moreover, (iv) $\Rightarrow$ (i) follows by Proposition \ref{PROPsub1} together with Sard's Theorem applied to the mapping $\exp_x$, (ii) $\Rightarrow$ (vii) is a consequence of Proposition \ref{PROPGohLip} and (i) $\Rightarrow$ (ii) follows from the inclusion $\mbox{Goh-Abn}^{min}(x) \subset \mbox{Abn}^{min}(x)$. Thus it remains to show that (i) $\Rightarrow$ (ix), this proof can be found in \cite[\S 2.1]{br20}.
\end{proof}

\section{Proof of Theorem \ref{THM}}\label{SECProofTHM}

Let $x\in M$ be fixed and $M_x$ be the set of points $y\in M\setminus\{x\}$ for which all minimizing geodesics have Goh-rank at most $1$, which by assumption has full Lebesgue measure in $M$. The first step of the proof of Theorem \ref{THM} consists in showing that we may indeed assume that we work in the open unit ball of $\R^n$ with a distribution that admits a global parametrization by $m$ smooth vector fields. For every $y \in \R^n$ and $r>0$, we denote by $B_r(y)$ the open unit ball in $\R^n$ centered at $y$ with radius $r$. Moreover, we recall that a point $y\in \R^n$ is a Lebesgue density point (or a point of density $1$) for a measurable set $E\subset \R^n$ if
\[
\lim_{r\rightarrow 0} \frac{\mathcal{L}^n\left(B_r(y)\cap E \right)}{\mathcal{L}^n\left(B_r(y)\right)}=1,
\]
where $\mathcal{L}^n$ stands for the Lebesgue measure in $\R^n$ and that if $E$ has positive Lebesgue measure then almost every point of $E$ is a Lebesgue density point for $E$ (see {\it e.g.} \cite{eg15}). Our first result is the following:

\begin{proposition}\label{PROPSTEP1}
If $\mbox{\rm Abn}^{min}(x)$ has positive Lebesgue measure in $M$, then there are a complete sub-Riemannian structure $(\bar{\Delta},\bar{g})$ of rank $m$ on $B_1(0)$ generated by an orthonormal family of smooth vector fields $\bar{\mathcal{F}}=\{\bar{X}^1, \ldots, \bar{X}^m\}$ on $B_1(0)$ along with a point $\bar{y} \in B_1(0)\setminus \{0\}$ such that the following properties are satisfied:
\begin{itemize}
\item[(i)] all minimizing horizontal paths from $0$ to $\bar{y}$ have Goh-rank at most $1$,
\item[(ii)] $\bar{y}$ is a Lebesgue density point of $\mbox{\rm Abn}^{min}(0)$. 
\end{itemize}
\end{proposition}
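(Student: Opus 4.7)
The plan is to first localize around a Lebesgue density point of $\mbox{Abn}^{min}(x)\cap M_x$, then to transport the structure to the open unit ball via a chart, and finally to conformally rescale the metric so that the resulting structure is complete without altering minimizing paths to a neighbourhood of the density point.

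Since $M_x$ has full Lebesgue measure and $\mbox{Abn}^{min}(x)$ has positive Lebesgue measure by hypothesis, the set $E:=\mbox{Abn}^{min}(x)\cap M_x$ has positive Lebesgue measure; I would pick a Lebesgue density point $y^\star\neq x$ of $E$ in $M$. By completeness of $(M,d_{SR})$ and a standard Ascoli--Arzel\`a argument, for $r>0$ small enough all minimizing horizontal paths from $x$ to points of the closed sub-Riemannian ball $\bar B_{SR}(y^\star,r)$ are equi-Lipschitz and their union has compact image inside a compact set $K\subset M$. A standard chart construction then provides an open neighbourhood $\mathcal U$ of $\{x\}\cup K\cup \bar B_{SR}(y^\star,r)$ diffeomorphic to $\R^n$ together with a diffeomorphism $\Phi:\mathcal U\to B_1(0)$ with $\Phi(x)=0$; set $\bar y:=\Phi(y^\star)$.

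Next I would push the structure to the unit ball: let $\bar\Delta:=\Phi_*\Delta$ and $g_0:=\Phi_*g$ on $B_1(0)$. Since $B_1(0)$ is contractible, the rank-$m$ smooth vector bundle $\bar\Delta\to B_1(0)$ admits a global smooth frame $Y^1,\ldots,Y^m$. Choose a smooth function $\lambda:B_1(0)\to [1,+\infty)$ with $\lambda\equiv 1$ on a neighbourhood of $\Phi(K)$ and $\lambda(z)\to+\infty$ sufficiently fast as $|z|\to 1$ so that every horizontal path in $(B_1(0),\bar\Delta)$ approaching $\partial B_1(0)$ has infinite $\lambda\,g_0$-length; set $\bar g:=\lambda\,g_0$. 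The structure $(\bar\Delta,\bar g)$ is then complete on $B_1(0)$. Finally, applying Gram--Schmidt to $Y^1,\ldots,Y^m$ with respect to $\bar g$ yields a smooth global orthonormal frame $\bar{\mathcal F}=\{\bar X^1,\ldots,\bar X^m\}$ of $\bar\Delta$, as required.

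To verify (i) and (ii) I would argue as follows. Because $\lambda\geq 1$ everywhere and $\lambda\equiv 1$ on $\Phi(K)$, the $\bar g$-length of any horizontal path in $B_1(0)$ is bounded below by its $g_0$-length, with equality for paths contained in $\Phi(K)$. Since every minimizing path from $x$ to a point of $\bar B_{SR}(y^\star,r)$ in $(M,\Delta,g)$ lies in $K$, its image via $\Phi$ is a $\bar g$-minimizing path in $B_1(0)$ of the same length, and conversely every $\bar g$-minimizing path from $0$ to a point of $\Phi(\bar B_{SR}(y^\star,r))$ must stay inside $\Phi(K)$ and corresponds via $\Phi^{-1}$ to a minimizing path in the original structure. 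This bijection preserves the abnormal and Goh-abnormal structure, since $\Phi_*$ intertwines Lie brackets and abnormal lifts are defined intrinsically; hence (i) follows from $y^\star\in M_x$, while (ii) follows from the fact that $\Phi$ is a bi-Lipschitz diffeomorphism on compact subsets with respect to any smooth Riemannian reference metrics, and therefore transports Lebesgue density points of $E\subset \mbox{Abn}^{min}(x)$ at $y^\star$ to Lebesgue density points of $\mbox{Abn}^{min}(0)$ at $\bar y$.

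The main obstacle is the conformal rescaling step: the function $\lambda$ must simultaneously restore completeness (by blowing up quickly enough at $\partial B_1(0)$) and preserve minimizing paths from $0$ to all points of a small ball around $\bar y$ (by being identically $1$ on a neighbourhood of $\Phi(K)$). Both requirements are compatible because $\Phi(K)$ is a compact subset of the open ball $B_1(0)$, so there is ample room between $\Phi(K)$ and $\partial B_1(0)$ to accommodate the required growth of $\lambda$, and the pointwise inequality $\lambda\geq 1$ prevents the appearance of any spurious shorter horizontal path in the rescaled structure.
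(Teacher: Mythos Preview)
Your argument has a genuine gap at the sentence ``A standard chart construction then provides an open neighbourhood $\mathcal U$ of $\{x\}\cup K\cup \bar B_{SR}(y^\star,r)$ diffeomorphic to $\R^n$''. This is not standard and is in general false: the compact set $K$, being the union of the images of \emph{all} minimizing horizontal paths from $x$ to points of a small ball around $y^\star$, need not be contained in any open set diffeomorphic to $\R^n$. Already in the Riemannian setting one sees the obstruction: on a flat cylinder $S^1\times\R$, if $x$ and $y^\star$ are antipodal on the circle factor then the two minimizing geodesics between them, taken together, form a non-contractible loop. Nothing in your choice of $y^\star$ (a density point of $\mbox{Abn}^{min}(x)\cap M_x$) excludes the possibility of several minimizing geodesics from $x$ to $y^\star$ whose union carries nontrivial topology, and by the closed-graph property of $y\mapsto\Gamma^y$ this topology persists for all nearby end-points. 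Your subsequent arguments (conformal rescaling, bijection of minimizers, preservation of Goh-rank, transport of density points) are all fine, but they rest on the existence of this single global chart.

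The paper's proof is organised precisely around this obstacle. Rather than looking for one chart, it covers the compact family $\Gamma^{\bar y}$ of minimizing geodesics by finitely many geodesics $\bar\gamma^1,\ldots,\bar\gamma^N$, each of which, being an embedded arc without self-intersection, does admit a tubular neighbourhood $\mathcal V^k$ diffeomorphic to $B_1(0)$ and carrying its own orthonormal frame. The closed-graph property then guarantees that every minimizing geodesic to any point in a small ball $\mathcal B$ around $\bar y$ lies entirely in one of the slightly smaller sets $\mathcal W^k\subset\mathcal V^k$. After conformally rescaling each structure to a complete one $(\Delta,g^k)$ on $\mathcal V^k$ (your $\lambda$-argument, applied $N$ times), one obtains $f_x=\min\{F^1,\ldots,F^{\bar N}\}$ on a neighbourhood of $\bar y$. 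The point now is that if every $F^k$ were Lipschitz from below almost everywhere on that neighbourhood, so would $f_x$ be, contradicting (via Proposition~\ref{PROPcharac}) that $\bar y$ is a density point of $\mbox{Abn}^{min}(x)$. Hence some $F^{\bar k}$ fails this, and one selects as the final base point a density point $\hat y$ of $\mathcal B''\setminus\mbox{Lip}^-(F^{\bar k})\subset\mbox{Abn}^{min}_{\bar k}(x)$; pushing the single structure $(\Delta,g^{\bar k})$ on $\mathcal V^{\bar k}$ to $B_1(0)$ then gives the desired $(\bar\Delta,\bar g)$. In short, the paper replaces your unjustified single chart by a finite family of charts and a minimum/pigeonhole argument to single one out.
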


\begin{proof}[Proof of  Proposition \ref{PROPSTEP1}]
Let us first recall a result of compactness for minimizing geodesics. For every $y\in M$, we denote by $\Gamma^y \subset W^{1,2}([0,1],M)$  the set of minimizing geodesics from $x$ to $y$. We refer the reader to \cite{agrachev01,riffordbook} for the proof of the following result:

\begin{lemma}\label{STEP1_LEM1}
For every compact set $\mathcal{K}\subset M$, the set of $\gamma \in \Gamma^y$ with $y\in \mathcal{K}$ is a compact subset of $W^{1,2}([0,1],M)$ and the mapping $y\in M \mapsto \Gamma^y \in \mathcal{C}(W^{1,2}([0,1],M))$ has closed graph (here $\mathcal{C}(W^{1,2}([0,1],M))$ stands for the set of compact subsets of $W^{1,2}([0,1],M)$ equipped with the Hausdorff topology).
\end{lemma}

Assume now that $\mbox{Abn}^{min}(x)$ has positive Lebesgue measure in $M$ and pick a Lebesgue density point $\bar{y}$ of the set $\mbox{Abn}^{min}(x)\cap M_x$. Lemma \ref{STEP1_LEM1} along with the parametrization that can be made along every minimizing geodesic as shown in Section \ref{SECExtremals} yields the following result:

\begin{lemma}\label{STEP1_LEM2}
There are an open neighborhood $\mathcal{B}$ of $\bar{y}$, a positive integer $N$, $N$ minimizing geodesics $\bar{\gamma}^1, \ldots$, $\bar{\gamma}^N \in \Gamma^{\bar{y}}$, $N$ open sets $\mathcal{V}^1, \ldots, \mathcal{V}^N \subset M$ diffeomorphic to $B_1(0)$ containing respectively $\bar{\gamma}^1([0,1]), \ldots, \bar{\gamma}^N([0,1])$, $N$ orthonormal  families of smooth vector fields $\mathcal{F}^1 = \{X^{1,1}, \ldots, X^{1,m}\}, \ldots, \mathcal{F}^N = \{X^{N,1}, \ldots, X^{N,m}\}$ defined respectively on $\mathcal{V}^1, \ldots, \mathcal{V}^N$ and $N$ open sets $\mathcal{W}^1, \ldots, \mathcal{W}^N$ containing $\mathcal{B}$ with 
\[
\bar{\gamma}^1([0,1]) \subset \overline{\mathcal{W}}^1 \subset \mathcal{V}^1, \ldots, \bar{\gamma}^N([0,1]) \subset \overline{\mathcal{W}}^N \subset \mathcal{V}^N
\]
such that the following properties are satisfied:
\begin{itemize}
\item[(i)] For every $k=1, \ldots, N$ and every $z\in \mathcal{V}^k$, $\Delta(z) = \mbox{Span}\{X^{k,1}(z), \ldots, X^{k,m}(z)\}$. 
\item[(ii)] For every $y\in \mathcal{B}$ and every $\gamma \in \Gamma^y$, there is $k_{\gamma} \in \{1, \ldots,N\}$  such that 
\[
  \gamma([0,1]) \subset \mathcal{W}^{k_{\gamma}}.
\]
\end{itemize}
\end{lemma}

We now need to consider a family of sub-Riemannian distances from $x$ whose minimum over $\mathcal{B}$ coincides with $d_{SR}(x,\cdot)$ (the sub-Riemannian distance with respect to $(\Delta,g)$). For this purpose, we state the following lemma whose proof is left to the reader.

\begin{lemma}\label{STEP1_LEM3}
For every $k=1, \ldots,N$, there is a smooth function $\psi^k : \mathcal{V}^k \rightarrow [1,+\infty)$ satisfying the following properties:
\begin{itemize}
\item[(i)] $\psi^k(z)=1$ for all $z\in \mathcal{W}^k$.
\item[(ii)] $\psi^k(z)>1$ for all $z\in \mathcal{V}^k \setminus \overline{\mathcal{W}}^k$.
\item[(iii)] The sub-Riemannian structure $(\Delta,g^k:=\psi^k g)$ on $\mathcal{V}^k$ is complete. 
\end{itemize}
\end{lemma}

For every $k=1, \ldots,N$, we define the function $F^k:\mathcal{V}^k \rightarrow [0,+\infty)$ by
\[
F^k(y):= \frac{1}{2} d_{SR}^{g^k}(x,y)^2 \qquad \forall y \in \mathcal{V}^k,
\]
where $d_{SR}^{g^k}$ stands for the sub-Riemannian distance associated with $(\Delta,g^k)$. Since each sub-Riemannian structure $(\Delta,g^k)$ is complete on $\mathcal{V}_k$, each $F^k$ is continuous. Moreover, since $\mathcal{B}$ is contained in all $\mathcal{W}^k$, we have thanks to Lemma \ref{STEP1_LEM2} (ii) and Lemma \ref{STEP1_LEM3} (i),
\[
f_x(y) := \frac{1}{2} d_{SR}(x,y)^2 = \min \Bigl\{ F^1(y), \ldots, F^N(y) \Bigr\} \qquad \forall y \in \mathcal{B}.
\]
By permuting the indices $1, \ldots, N$ if necessary, we may indeed assume that there is $\bar{N} \in \{1, \ldots, N\}$ such that 
\[
f_x(\bar{y}) = F^1(\bar{y})= \cdots = F^{\bar{N}}(\bar{y}),
\]
so that there is an open neighborhood $\mathcal{B}'\subset \mathcal{B}$ of $\bar{y}$ satisfying
\[
f_x(y)= \min \Bigl\{ F^1(y), \ldots, F^{\bar{N}}(y) \Bigr\} \qquad \forall y \in \mathcal{B}'.
\]
Furthermore, if for every $y\in \mathcal{B}'$ and every $k\in \{1,\ldots, \bar{N}\}$, we denote by $\Gamma^y_k \subset W^{1,2}([0,1],M)$  the set of minimizing geodesics from $x$ to $y$ with respect to $(\Delta,g^k)$, then we have (by Lemma \ref{STEP1_LEM2} (ii) and Lemma \ref{STEP1_LEM3} (i)-(ii))
\[
\Gamma^{\bar{y}}_k = \Gamma^{\bar{y}} \qquad \forall k=1, \ldots, \bar{N},
\]
and in addition, since  $\bar{y}\in M_x$, all minimizing geodesics from $x$ to $\bar{y}$ have Goh-rank at most $1$. As a consequence, since $\Gamma^{\bar{y}}$ is a compact subset of $W^{1,2}([0,1],M)$ and the mappings $y\in \mathcal{B}' \mapsto \Gamma^y_k \in \mathcal{C}(W^{1,2}([0,1],M))$, with $k=1, \ldots, \bar{N}$, have closed graph and since the mapping $\gamma \in \Omega^{\Delta}_x \mapsto \mbox{Goh-rank}(\gamma)\in \N$ (recall that $\Omega^{\Delta}_x$ stands for the set of horizontal paths $\gamma:[0,1] \rightarrow M$ such that $\gamma(0)=x$) is upper semi-continuous, we infer that there is an open neighborhood $\mathcal{B}''\subset \mathcal{B}'$ of $\bar{y}$ such that 
\[
\mbox{Goh-rank}(\gamma)\leq 1 \qquad \forall y \in \mathcal{B}'', \, \forall k=1, \ldots,\bar{N}, \, \forall \gamma \in \Gamma^{y}_k. 
\]
We are ready to complete the proof of Proposition \ref{PROPSTEP1}. We claim that there is $\bar{k}$ in $\{1,\ldots, \bar{N}\}$ such that $\mathcal{B}'' \setminus \mbox{Lip}^-(F^{\bar{k}})$ has positive Lebesgue measure. As a matter of fact, otherwise there is a set $\tilde{\mathcal{B}}$ of full Lebesgue measure in $\mathcal{B}''$ such that every point in $\tilde{\mathcal{B}}$ belongs to $\mbox{Lip}^-(F^k)$ for all $k=1, \ldots, \bar{N}$. Then, for every $y\in \tilde{\mathcal{B}}$,  each $F^k$ admits a support function $\varphi^k$ from below at $y$ which is Lipschitz on its domain and so the function $\min \{\varphi^k\, \vert \, k=1,\ldots,N\}$ is a support function from below for $f_x$ at $y$ which is Lipschitz on its domain (given by the intersection of the domains of $\varphi^1, \ldots, \varphi^N$). Therefore, $\tilde{\mathcal{B}}$ is contained in $\mbox{Lip}^-(f_x)$ and as a consequence the proof of Proposition \ref{PROPcharac} shows that $\mbox{Abn}^{min}(x)\cap \mathcal{B}''$ has Lebesgue measure zero. This contradicts the fact that $\bar{y}\in \mathcal{B}$ is a Lebesgue density point of $\mbox{Abn}^{min}(x)$. We conclude the proof by considering a Lebesgue density point $\hat{y}$ of the set 
\[
\mathcal{B}'' \setminus \mbox{Lip}^-(F^{\bar{k}})\subset \mathcal{B}'' \setminus \mbox{Lip}(F^{\bar{k}}) \subset \mbox{Abn}_{\bar{k}}^{min}(x)\cap \mathcal{B}''
\]
(the inclusion follows by Proposition \ref{PROPLip-PL} and Proposition \ref{PROPsub3}, and $\mbox{Abn}_k^{min}(x)$ stands for the set of points that can be reached from $x$ through singular minimizing horizontal paths with respect to $(\Delta,g^{\bar{k}})$)
and by pushing forward the sub-Riemannian structure $(\Delta,g^{\bar{k}})$, the family of  vector fields $\mathcal{F}^{\bar{k}} = \{X^{\bar{k},1}, \ldots, X^{\bar{k},m}\}$ on $\mathcal{V}^{\bar{k}}$ and $\hat{y}$ to $B_1(0)$ by using the diffeomorphism from $\mathcal{V}^{\bar{k}}$ to $B_1(0)$ (note that we have to multiply each vector field by $1/\sqrt{\psi^{\bar{k}}}$ to obtain an orthonormal family).
\end{proof}

Now, we suppose for  contradiction that $\mbox{Abn}^{min}(x)$ has positive Lebesgue measure in $M$ and we consider the sub-Riemannian structure $(\bar{\Delta},\bar{g})$ on $B_1(0)$, the orthonormal family of smooth vector fields $\bar{\mathcal{F}}$ on $B_1(0)$  and the Lebesgue density point $\bar{y}$ (of $\mbox{Abn}^{min}(0)$) given by Proposition \ref{PROPSTEP1}. We define the continuous function $F:B_1(0)\rightarrow [0,+\infty)$ by 
\[
F(y):= \frac{1}{2} d_{SR} (0,y)^2 \qquad \forall y \in B_1(0),
\]
where $d_{SR}$ stands for the sub-Riemannian metric associated with $(\bar{\Delta},\bar{g})$ and we denote respectively by $\exp_0$ the exponential mapping associated with $(\bar{\Delta},\bar{g})$, by $\mathcal{C}_0$ the set of its critical values, by $M_0$  the set of points $y\in B_1(0)\setminus\{0\}$ for which all minimizing geodesics from $0$ to $y$ have Goh-rank at most $1$, and by $\Gamma^y \subset W^{1,2}([0,1],B_0(1))$  the set of minimizing geodesics from $0$ to $y\in B_1(0)$. It follows from the proof of Proposition \ref{PROPcharac} and the upper semi-continuity of the mapping $\gamma \in \Omega^{\bar{\Delta}}_0 \mapsto \mbox{Goh-rank}(\gamma)\in \N$ (where $\Omega^{\bar{\Delta}}_0$ stands for the set of horizontal paths $\gamma:[0,1] \rightarrow B_0(1)$ such that $\gamma(0)=0$) that the set $\mathcal{A} \subset \mathcal{B}_1(0)$ defined by 
\begin{eqnarray}\label{20janv3}
\mathcal{A} := M_0  \setminus \left( \mbox{\rm Lip}^-(F) \cup \mathcal{C}_0 \right)
\end{eqnarray}
has positive Lebesgue measure. We pick a Lebesgue density point $\hat{y}$ of $\mathcal{A}$. 

Then, we denote by $U$ the open set of $u \in L^2([0,1],\R^m)$ for which the solution $\gamma^u:[0,1] \rightarrow B_1(0)$  to the Cauchy problem 
\[
\dot{\gamma}^u (t) = \sum_{i=1}^m u_i(t) \, \bar{X}^i \left(\gamma^u(t)\right) \, \mbox{ for a.e. } t \in [0,1], \quad \gamma^u(0)=0
\]
is well-defined, we denote by $E:U \rightarrow B_1(0)$ the end-point mapping associated with $0$ and $\bar{\mathcal{F}}$ defined by
\[
E(u) :=  \gamma^u(1) \qquad \forall u \in U,
\]
and we recall that the function $C: L^2([0,1],\R^m)\rightarrow [0,+\infty)$ has been defined in Section \ref{SECExtremals} by 
\[
C(u) :=\frac{1}{2} \|u\|_{L^2}^2 \qquad \forall u \in L^2([0,1],\R^m).
\]
Moreover, for every vector space $V \subset\R^n$, we denote by $\mbox{Proj}_{V}^{\perp}:\R^n \rightarrow V$ the orthogonal projection to $V$ (with respect to the Euclidean metric) and for every $y\in \R^n$ we define the affine space $V(y)$ by
\[
V(y) := \{y\} + V.
\]
The second step of the proof of Theorem \ref{THM} consists in proving the following result which is a preparatory result for the next step.

\begin{proposition}\label{PROPSTEP2}
There are $\delta, r, \rho \in (0,1)$, $K>0$, a control $\hat{u}$ in $U$ with $\gamma^{\hat{u}}\in \Gamma^{\hat{y}}$, a linear hyperplane $V\subset \R^n$ and a smooth function $h:[0,\delta^2/4) \rightarrow [0,+\infty)$ such that the following properties are satisfied: 
\begin{itemize}
\item[(i)] For every $u\in U$ with $\|u-\hat{u}\|_{L^2} < \delta$ and every $z\in V(\gamma^u(1)) \cap B_{\rho}(\gamma^u(1))$, there is $v \in U$ such that 
 \begin{eqnarray}\label{PROPSTEP2_1}
 \mbox{\em Proj}_{V(\gamma^u(1))}^{\perp} \left(\gamma^{v}(1)\right)=z,
 \end{eqnarray}
 \begin{eqnarray}\label{PROPSTEP2_2} 
 \left| \gamma^v(1)-z \right| \leq K \left| z-\gamma^u(1)\right|,
 \end{eqnarray}
 \begin{eqnarray}\label{PROPSTEP2_3}
 C(v) \leq C(u) + K \left|z - \gamma^u(1)\right|,
 \end{eqnarray}
 \begin{eqnarray}\label{PROPSTEP2_4}
 \left| \| v-\hat{u}\|_{L^2}^2 -  \| u-\hat{u}\|_{L^2}^2 \right| \leq  K \left| z-\gamma^u(1)\right|.
 \end{eqnarray}
 \item[(ii)] The function $h$ is smooth, nondecreasing and satisfies 
 \[
h(\alpha) = 0 \, \, \forall \alpha \in [0,\delta^2/4], \quad h(\alpha) > 0 \, \,\forall \alpha \in (\delta^2/4,\delta^2), \quad \lim_{\alpha \rightarrow \delta^2} h(\alpha)= +\infty.
\]
\item[(iii)] The function $W: B_r(\hat{y}) \rightarrow [0,+\infty)$ defined by
\[
W(y) := \inf \Bigl\{ C(u) + h\left(\|u-\hat{u}\|_{L^2}^2\right) \, \vert \, u \in U \mbox{ s.t. } \gamma^{u}(1)=y \Bigr\} \qquad \forall y \in B_r(\hat{y}) 
\]
is continuous and the set 
\[
\mathcal{K} := \Bigl\{y\in B_{r/2}(\hat{y})\setminus \mbox{\rm Lip}^-(W)\, \vert \, W(y)=F(y) \Bigr\} 
\]
have positive Lebesgue measure. 
\end{itemize}
\end{proposition}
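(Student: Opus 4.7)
\emph{Plan.} Since $\hat{y}\notin\mathcal{C}_0$, we select an initial covector $\xi\in T_0^*\R^n$ with $\exp_0(\xi)=\hat{y}$ and $d_\xi\exp_0$ surjective, and let $\hat{u}$ denote the associated normal-extremal control; then $\gamma^{\hat{u}}\in\Gamma^{\hat{y}}$ and the end-point differential $D_{\hat{u}}E\colon L^2([0,1],\R^m)\to\R^n$ is surjective.  Fix any linear hyperplane $V\subset\R^n$.  By continuity of $u\mapsto D_uE$ on $L^\infty$-bounded neighborhoods of $\hat{u}$, the differential $D_uE$ remains surjective with a uniformly controlled right inverse for every $u$ in some $L^2$-ball around $\hat{u}$ of radius $\delta$; a quantitative open-mapping statement (of Lyusternik--Graves type) then supplies $\rho,K>0$ such that for every such $u$ and every $z'\in B_\rho(\gamma^u(1))$ there exists $v\in U$ with $\gamma^v(1)=z'$ and $\|v-u\|_{L^2}\leq K|z'-\gamma^u(1)|$.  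Taking $z'=z$ when $z\in V(\gamma^u(1))\cap B_\rho(\gamma^u(1))$ gives (\ref{PROPSTEP2_1})--(\ref{PROPSTEP2_2}) for free, and (\ref{PROPSTEP2_3})--(\ref{PROPSTEP2_4}) follow by expanding $\|v\|_{L^2}^2-\|u\|_{L^2}^2 = 2\langle u,v-u\rangle_{L^2}+\|v-u\|_{L^2}^2$ and the analogous identity with $\hat{u}$ in place of $0$, bounding the cross-term via Cauchy--Schwarz and the uniform $L^2$-bounds on $u$ and $u-\hat{u}$ over the $\delta$-ball (enlarging $K$ if needed).  For (ii), a standard explicit construction such as $h(\alpha)=\eta(\alpha-\delta^2/4)/(\delta^2-\alpha)$, for a smooth flat-at-zero bump $\eta\geq 0$ positive on $(0,\infty)$, produces the required $h$.

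\emph{Part (iii).}  Continuity of $W$ is a routine marginal-function argument: upper semi-continuity from the controlled perturbations of part (i), and lower semi-continuity from the fact that the blow-up of $h$ at $\delta^2$ confines the effective infimum to a closed $L^2$-ball around $\hat{u}$ of radius strictly less than $\delta$, combined with weak $L^2$-compactness and continuity of $E$ on $L^\infty$-bounded strongly convergent sequences.  To show that $\mathcal{K}$ has positive Lebesgue measure, we establish $W=F$ on $B_r(\hat{y})$ for $r$ small enough.  Local inversion of $\exp_0$ near $\xi$ provides a smooth family $u_y$ of normal-extremal controls with $\gamma^{u_y}(1)=y$ and $u_{\hat{y}}=\hat{u}$, so $\|u_y-\hat{u}\|_{L^2}\to 0$ as $y\to\hat{y}$; combining the Hausdorff upper semi-continuity of $y\mapsto\Gamma^y$ (Lemma \ref{STEP1_LEM1}) with the non-conjugacy at $\xi$, one checks that for $y$ sufficiently close to $\hat{y}$, $u_y$ is a minimizing control from $0$ to $y$.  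Then $W(y)\leq C(u_y)+h(\|u_y-\hat{u}\|^2)=F(y)$ for $y\in B_r(\hat{y})$, giving $W=F$ there.  On $B_{r/2}(\hat{y})$, every Lipschitz support function from below of $W$ at a point is a Lipschitz support function from below of $F$ at that point (since $W=F$ on a full neighborhood), hence $\mbox{Lip}^-(W)\cap B_{r/2}(\hat{y})=\mbox{Lip}^-(F)\cap B_{r/2}(\hat{y})$; combined with $\hat{y}$ being a Lebesgue density point of $\mathcal{A}\subset B_1(0)\setminus\mbox{Lip}^-(F)$, this yields $\mathcal{A}\cap B_{r/2}(\hat{y})\subset\mathcal{K}$ of positive Lebesgue measure.

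\emph{Main difficulty.}  The most delicate step is the equality $W=F$ on $B_r(\hat{y})$: it requires that the normal-extremal branch near $\hat{u}$ remains the global minimizer for every $y$ near $\hat{y}$, which can a priori fail when $\Gamma^{\hat{y}}$ contains alternative (for instance singular) minimizers whose local branches might undercut the cost of $u_y$.  The structural conditions $\hat{y}\in\mathcal{A}$ (regularity of $\exp_0$, bounded Goh-rank, absence of Lipschitz-from-below) should be sufficient to rule out such competition; if the pointwise equality on the full ball proves harder, the weaker inclusion $\{W=F\}\supset\mathcal{A}\cap B_r(\hat{y})$, which is all that the density-point argument actually needs, can be obtained directly from the normal-extremal parametrization of minimizers available at points of $\mathcal{A}$.
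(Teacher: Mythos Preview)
Your proposal has a genuine gap at its very first step, and this gap propagates through the rest of the argument.

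\textbf{The choice of $\hat{u}$ is unjustified.} You select $\xi$ with $\exp_0(\xi)=\hat{y}$ and $d_\xi\exp_0$ surjective, and take $\hat{u}$ to be the associated control. But the hypothesis $\hat{y}\notin\mathcal{C}_0$ says only that $\hat{y}$ is not a \emph{critical value} of $\exp_0$; it does not say $\hat{y}$ lies in the image of $\exp_0$, nor that any normal geodesic ending at $\hat{y}$ is \emph{minimizing}. The dichotomy established in the paper (Lemma~\ref{STEP2_LEM1}) is that each $\gamma\in\Gamma^{\hat{y}}$ is either non-singular \emph{or} singular of Goh-rank $1$; nothing rules out the possibility that \emph{every} minimizer to $\hat{y}$ is strictly abnormal, in which case no $\hat{u}$ with $\gamma^{\hat{u}}\in\Gamma^{\hat{y}}$ and $D_{\hat{u}}E$ surjective exists at all. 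Your construction of part~(i) via first-order openness then never gets off the ground, and the hyperplane $V$ cannot be chosen arbitrarily.

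\textbf{The argument for (iii) fails even when such $\hat{u}$ exists.} Since $h\geq 0$, one always has $W\geq F$, with equality at $y$ precisely when some minimizing control lies within $\delta/2$ of $\hat{u}$. If $\Gamma^{\hat{y}}$ contains another minimizer $u'$ with $\|u'-\hat{u}\|_{L^2}$ large (e.g.\ a strictly abnormal one), then for $y$ near $\hat{y}$ whose minimizers cluster near $u'$ one gets $W(y)>F(y)$; there is no reason the normal branch $u_y$ remains globally minimizing. Your fallback, that $\{W=F\}\supset\mathcal{A}\cap B_r(\hat{y})$, is equally unsupported: points of $\mathcal{A}$ are precisely those \emph{outside} $\mbox{Lip}^-(F)$, and nothing guarantees they admit a minimizing control near $\hat{u}$ (or any normal minimizer whatsoever).

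\textbf{How the paper proceeds instead.} The paper does not fix $\hat{u}$ in advance. It first covers the compact set $\Gamma^{\hat{y}}$ by finitely many controls $u^1,\ldots,u^N$ (Lemma~\ref{STEP2_LEM4}). For each $u^l$ it proves the analogue of part~(i): when $\gamma^{u^l}$ is non-singular this is the Inverse Function Theorem (Lemma~\ref{STEP2_LEM2}), but when $\gamma^{u^l}$ is singular of Goh-rank $1$ the hyperplane $V^{u^l}=(P^{u^l})^\perp$ is \emph{forced} by the unique Goh abnormal and one must invoke a second-order openness theorem (Theorem~\ref{THMopenquant}, Lemma~\ref{STEP2_LEM3}) since $D_{u^l}E$ need not be surjective even onto $V^{u^l}$. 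Then $F=\min_l W^l$ on $B_r(\hat{y})$ by the covering, and a combinatorial decomposition $B_{r/2}(\hat{y})=\bigcup_I\mathcal{Z}^I$ together with the density of $\hat{y}$ in $\mathcal{A}$ \emph{selects} the particular index $l$ (hence $\hat{u}:=u^l$ and $V:=V^{u^l}$) for which $\mathcal{K}$ has positive measure. The choice of $\hat{u}$ and $V$ is thus the \emph{output} of a measure-theoretic pigeonhole, not an input fixed at the start.
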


\begin{proof}[Proof of Proposition \ref{PROPSTEP2}]
We start with the following lemma which is an easy consequence of the construction of $\hat{y}$. 

\begin{lemma}\label{STEP2_LEM1}
For  every $\gamma \in \Gamma^{\hat{y}}$, one of the following situation occurs: 
\begin{itemize}
\item[(i)] $\gamma$ is not a singular horizontal path. 
\item[(ii)] $\gamma$ is a singular horizontal path and $\mbox{Goh-rank}(\gamma)=1$. 
\end{itemize} 
\end{lemma}

\begin{proof}[Proof of Lemma \ref{STEP2_LEM1}]
Let $\gamma \in \Gamma^{\hat{y}}$ be fixed. If $\gamma$ is the projection of a normal extremal (w.r.t $(\bar{\Delta},\bar{g})$), then there is $p\in T_0^*B_1(0)=(\R^n)^*$ such that $y=\exp_0(p)$. Since $\hat{y}\in \mathcal{A}\subset B_1(0) \setminus \mathcal{C}_0$, the mapping $\exp_0$ is a submersion at $p$ and as a consequence $\gamma$ is not a singular horizontal path. Otherwise, $\gamma$ is not projection of a normal extremal and so it is singular and admit a abnormal lift satisfying the Goh condition (see Remark \ref{20oct1}). Since $\hat{y}$ belongs to $M_0$, we have $\mbox{Goh-rank}(\gamma)=1$. 
\end{proof}

We need to consider all minimizing geodesics in $\Gamma^{\hat{y}}$ and so to distinguish between the cases (i) and (ii) of Lemma \ref{STEP2_LEM1}.  The following result follows easily from the Inverse Function Theorem (see {\it e.g.} \cite[Proof of Theorem 3.14 p. 99]{riffordbook}). 

\begin{lemma}\label{STEP2_LEM2}
Let $u \in U$ with $\gamma^u \in \Gamma^{\hat{y}}$ and $\gamma^u$ non-singular be fixed. Then there are $\delta, \rho \in (0,1)$ and $K>0$ such that the following property is satisfied: For every $v\in U$ with $\|v-u\|_{L^2} < \delta$ and every $z\in B_{\rho}(\gamma^v(1))$, there is $w \in U$ such that
 \begin{eqnarray}\label{STEP2_LEM2_1}
 \gamma^w(1) =z, 
 \end{eqnarray}
 \begin{eqnarray}\label{STEP2_LEM2_2}
 C(w) \leq C(v) + K \left|z-\gamma^v(1)\right|,
 \end{eqnarray}
 \begin{eqnarray}\label{STEP2_LEM2_3}
 \| w-v\|_{L^2} \leq K \left|z - \gamma^v(1)\right|,
\end{eqnarray}
where the last inequality implies
 \begin{eqnarray}\label{STEP2_LEM2_4}
  \left|\| w-u\|_{L^2}^2 -  \| v-u\|_{L^2}^2\right| \leq  K(K+2) \left| z-\gamma^v(1)\right|.
 \end{eqnarray}
\end{lemma}

For every singular minimizing geodesic $\gamma^u \in \Gamma^{\hat{y}}$ with $u\in U$ (since $\hat{y}\in M_0$, all those $\gamma^u$ verify $\mbox{Goh-rank}(\gamma^u)= 1$), we denote by $P^u$ the vector line of co-vectors $p\in (\R^n)^*$  for which there is an abnormal lift $\psi$ of $\gamma^u$ satisfying the Goh condition and $\psi(1)=(\hat{y},p)$, and we define the hyperplane $V^u\subset \R^n$ by 
\begin{eqnarray}\label{10oct2}
V^{u} := \left( P^{u}\right)^{\perp} := \Bigl\{ v \in \R^n \, \vert \, p\cdot v=0, \, \forall p \in P^{u} \Bigr\}.
\end{eqnarray}
Note that since any co-vector in $P^{u}$ annihilates the image of the end-point mapping $E$ from $0$ associated with $\mathcal{F}$ in $U$ (see Section \ref{SECExtremals}), we have   
\begin{eqnarray}\label{21oct7}
\mbox{Im} \left(D_{u}E\right) \subset V^{u}.
\end{eqnarray}
The following lemma follows from an extension of a theorem providing a sufficient condition for local openness of a mapping at second order due to Agrachev and Sachkov \cite{as04}, we refer the reader to Appendix  \ref{SECSecondOrder} for the statement of the result and its proof. 

\begin{lemma}\label{STEP2_LEM3}
 Let $u \in U$ with $\gamma^u \in \Gamma^{\hat{y}}$ and $\gamma^u$ singular of Goh-rank $1$, be fixed. Then there are $\delta, \rho \in (0,1)$ and $K>0$ such that the following properties are satisfied: For every $v\in U$ with $\|v-u\|_{L^2} < \delta$ and every $z\in V^{u}(\gamma^v(1)) \cap B_{\rho}(\gamma^v(1))$, there is $w \in U$ such that 
 \begin{eqnarray}\label{STEP2_LEM3_1}
 \mbox{\em Proj}_{V^{u}(\gamma^v(1))}^{\perp} \left(\gamma^{w}(1)\right)=z, 
 \end{eqnarray}
 \begin{eqnarray}\label{STEP2_LEM3_2}
 \left| \gamma^w(1)-z \right| \leq K \left| z-\gamma^v(1)\right|,
 \end{eqnarray}
 \begin{eqnarray}\label{STEP2_LEM3_3}
 C(w) \leq C(v) + K \left|z - \gamma^v(1)\right|,
 \end{eqnarray}
 \begin{eqnarray}\label{STEP2_LEM3_4}
  \left|\| w-u\|_{L^2}^2 -  \| v-u\|_{L^2}^2\right| \leq  K \left| z-\gamma^v(1)\right|.
 \end{eqnarray}
 \end{lemma}

\begin{proof}[Proof of Lemma \ref{STEP2_LEM3}]
 Let $u \in U$, with $\gamma^u \in \Gamma^{\hat{y}}$ singular of Goh-rank $1$, be fixed and let $\mathcal{E}:U \rightarrow V^u(\hat{y})$ be the smooth mapping defined by
\[
\mathcal{E}(v) := \mbox{Proj}_{V^u(\hat{y})}^{\perp} \left( E(v) \right) \qquad \forall v \in U.
\]
We need to consider two different cases.  \\

\noindent First case: $ \mbox{Im} ( D_{u} \mathcal{E})=V^{u}$.\\
There are $u^1, \ldots, u^{n-1} \in L^2([0,1],\R^m)$ such that the linear operator
\[
\begin{array}{rcl}
\R^{n-1} & \longrightarrow & V^{u} \\
\alpha & \longmapsto & \sum_{i=1}^{n-1} \alpha_i D_u\mathcal{E} \left(u^{i}\right)
\end{array}
\] 
is invertible. Thus, by the Inverse Function Theorem, the smooth mapping $\mathcal{G}^u: \R^{n-1} \rightarrow V^{u}(\hat{y})$ defined in a neighborhood of the origin by
\[
\mathcal{G}^u (\alpha) := \mathcal{E} \left( u + \sum_{i=1}^{n-1}  \alpha_i u^{i}\right)
\]
admits an inverse $\mathcal{H}^u$ of class $C^1$ on an open neighborhood of $u$. In fact, by $C^1$ regularity of $E$, this property holds uniformly on a neighborhood of $u$. There are $\delta, \rho \in (0,1)$ and $K>0$ such that for every $v\in U$ with $\|v-u\|_{L^2}< \delta$, the smooth mapping $\mathcal{G}^v: \R^{n-1} \rightarrow V^{u}(\hat{y})$ defined in a neighborhood of the origin by
\[
\mathcal{G}^v (\alpha) := \mathcal{E} \left( v + \sum_{i=1}^{n-1}  \alpha_i u^{i}\right)
\]
admits an inverse $\mathcal{H}^v=(\mathcal{H}^v_1,\ldots, \mathcal{H}^v_{n-1}): V^u(\hat{y}) \cap B_{\rho}(\mathcal{E}(v)) \rightarrow \R^{n-1}$ of class $C^1$ such that 
\[
\left| \mathcal{H}^v(z)-\mathcal{H}^v(z') \right| \leq K |z-z'| \qquad \forall z,z' \in V^u(\hat{y}) \cap B_{\rho}(\mathcal{E}(v)).
\]
As a consequence, if $v$ belongs to $U$ with $\|v-u\|_{L^2} < \delta$ and $z$ belongs to $V^{u}(\gamma^v(1)) \cap B_{\rho}(\gamma^v(1))$, then we have 
\[
z' :=  \mbox{Proj}_{V^{u}(\hat{y})}^{\perp} (z) \in V^u(\hat{y}) \cap B_{\rho}(\mathcal{E}(v)).
\]
Hence, by the above result, the control $w\in U$ defined by 
\begin{eqnarray}\label{20janv1}
w:=v + \sum_{i=1}^{n-1}  \mathcal{H}_i^v(z') u^{i}
\end{eqnarray}
satisfies 
\[
\mbox{Proj}_{V^u(\hat{y})}^{\perp} \left( \gamma^w(1)\right)= \mathcal{E}(w) =z',
\]
which implies
\[
\mbox{Proj}_{V^u(\gamma^v(1))}^{\perp} \left( \gamma^w(1)\right)= z,
\]
and, by noting that 
\[
\mathcal{H}^v(\mbox{Proj}_{V^u(\hat{y})}^{\perp}\gamma^v(1))= \mathcal{H}^v(\mathcal{E}(v))=0 \quad \mbox{and} \quad \left|z'-\mathcal{E}(v)\right| = \left| z-\bar{\gamma}^v(1)\right|,
\]
we also have 
\begin{eqnarray*}
 \|w-v\|_{L^2}  & = &  \left\|  \sum_{i=1}^{n-1}  \mathcal{H}_i^v(z') u^{i} -  \sum_{i=1}^{n-1}  \mathcal{H}^v_i(\mathcal{E}(v)) u^i\right\|_{L^2}  \\
& \leq & \left| \mathcal{H}^v(z') - \mathcal{H}^v(\mathcal{E}(v))\right|  \sum_{i=1}^{n-1}   \left\|u^i\right\|_{L^2}  \\
& \leq & K \left|z' -\mathcal{E}(v) \right|  \sum_{i=1}^{n-1}   \left\|u^i\right\|_{L^2}  \\
& = & K \left|z -\gamma^v(1)\right|  \sum_{i=1}^{n-1}   \left\|u^i\right\|_{L^2} =: KK' \left|z -\gamma^v(1)\right|.
\end{eqnarray*}
So, by considering local Lipschitz constants $L_E, L_C$ respectively for $E$ and $C$, we infer that for any $v\in U$ with $\|v-u\|_{L^2} < \delta$ and any $z\in V^{u}(\gamma^v(1)) \cap B_{\rho}(\gamma^v(1))$, the control $w$ given by (\ref{20janv1}) satisfies (\ref{STEP2_LEM3_1}), 
\begin{eqnarray*}
\left| \gamma^w(1)-z \right| & \leq & \left| \gamma^w(1)-\gamma^v(1) \right| + |\gamma^v(1)-z| \\
& \leq &  L_E \|w-v\|_{L^2}  + |\gamma^v(1)-z| \\
& = & (L_EKK'+1)  \left|z -\gamma^v(1)\right|
\end{eqnarray*}
which gives (\ref{STEP2_LEM3_2}) (for a certain constant), 
\[
C(w) \leq C(v) + L_C \|w-v\|_{L^2} \leq C(v) + L_CK K' \left|z - \gamma^v(1)\right|
\]
which gives (\ref{STEP2_LEM3_3}) (for a certain constant), and (because $\|v-u\|_{L^2} < \delta<1$  and $|z-\gamma^v(1)|<\rho<1$)
\begin{eqnarray*}
\left| \| w- u\|_{L^2}^2 -  \| v-u\|_{L^2}^2 \right| & = & \left| \| w-v\|_{L^2}^2   + 2 \langle w-v,v-u\rangle_{L^2}\right| \\
& \leq &    \| w-v\|_{L^2}^2 + 2 \| w-v\|_{L^2}\\
& \leq &  KK' (KK'+2)\left|z - \gamma^v(1)\right|
\end{eqnarray*}
which gives (\ref{STEP2_LEM3_4}) (for a certain constant). We complete the proof by considering the maximum of the constants above.\\

\noindent Second case: $ \mbox{Im} ( D_{u} \mathcal{E})\neq V^{u}$.\\
We claim that
\begin{eqnarray}\label{21oct8}
\mbox{ind}_- \left( \lambda \cdot \left( D^2_{u} \mathcal{E} \right)_{\vert \mbox{Ker} (D_{u} \mathcal{E})}  \right) =+\infty \qquad \forall \lambda \in \left( \mbox{Im} \left( D_{u} \mathcal{E} \right)\right)^{\perp^{V^u}} \setminus \{0\},
\end{eqnarray} 
where $( \mbox{Im} ( D_{u} \mathcal{E}))^{\perp^{V^u}}$ stands for the set of linear forms on $V^u$ which annihilate $ \mbox{Im} ( D_{u} \mathcal{E})$. To prove the claim, we observe that if (\ref{21oct8}) is false then there is a linear form $\lambda \neq 0$ in  $( \mbox{Im} ( D_{u} \mathcal{E}))^{\perp^{V^u}}$ such that
\[
\mbox{ind}_- \left( \lambda \cdot \left( D^2_{u} \mathcal{E} \right)_{\vert \mbox{Ker} (D_{u} \mathcal{E})}  \right) < +\infty.
\]
Extend $\lambda$ into a non-zero linear form $\tilde{\lambda}$ on $\R^n$ by setting  $\tilde{\lambda} := \lambda \cdot \mbox{Proj}_{V^u}^{\perp}$. Since $\mbox{Im} \left(D_{u}E\right) \subset V^{u}$ (by (\ref{21oct7})), the linear form $\tilde{\lambda}$ belongs to  $( \mbox{Im} ( D_{u}E))^{\perp}$ and we have $\mbox{Ker}(D_{u} \mathcal{E})=\mbox{Ker}(D_{u}E)$. As a consequence, since
\[
D^2_{u} E = D^2_{u} \mathcal{E} + \left( D^2_{u} E -D^2_{u} \mathcal{E} \right),
\]
where the image of $D^2_{u} E -D^2_{u} \mathcal{E} $ is orthogonal to $V^{u}$, we infer that
\[
\mbox{ind}_- \left( \lambda \cdot \left( D^2_{u} \mathcal{E} \right)_{\vert \mbox{Ker} (D_{u} \mathcal{E})}  \right) = \mbox{ind}_- \left( \tilde{\lambda} \cdot \left( D^2_{u} E \right)_{\vert \mbox{Ker} (D_{u} E)}  \right).
\]
Thus if the claim is false, then by Remark \ref{20oct1}, the horizontal path $\gamma^{u}$ admits an abnormal extremal $\psi$ satisfying the Goh condition with $\psi(1)=\tilde{\lambda}$. Since $\tilde{\lambda}\notin P^{u}$, this is a contradiction. \\
We can now conclude the proof of the second case by applying Theorem \ref{THMopenquant} at $\bar{u}=u$ with 
\[
(X,\|\cdot\|)=\left(L^2([0,1],\R^m),\|\cdot\|_{L^2}\right), \quad F=\mathcal{E}:U \longrightarrow V^u(\hat{y}),
\]
\[ 
\mbox{and} \quad G=(E,C,\bar{C}): U \longrightarrow \R^{n+2} \quad \mbox{with} \quad \bar{C}=\|\cdot-u\|_{L^2}^2: U \longrightarrow \R.
\]
 By (\ref{21oct8}), there exist $\delta, \rho\in (0,1) $ and $K>0$ such that for any $v \in U$ and any $z'\in V^u(\hat{y})$ with
\[
\left\| v - u\right\|_{L^2} < \delta \quad \mbox{and} \quad |z'-\mathcal{E}(v)|<\rho,
\]
there are $w_1, w_2 \in L^2([0,1],\R^m)$ such that $v+w_1+w_2 \in U$,
\[
z'=\mathcal{E}(v+w_1+w_2),
\]
\[
w_1 \in \mbox{Ker}\left(D_v\mathcal{E}\right) \cap \mbox{Ker}\left(D_vG\right) 
\]
\[
\mbox{and} \quad  \|w_1\|_{L^2}<K \sqrt{|z'-\mathcal{E}(v)|}, \quad \|w_2\|_{L^2}<K |z'-\mathcal{E}(v)|.
\]
By setting $w:=v+w_1+w_2$ and by noting that $w_1 \in \mbox{Ker}(D_vG)\subset \mbox{Ker}(D_vE)$, the Taylor formula gives
\begin{eqnarray*}
\gamma^w(1)-\gamma^v(1) & = & E(w)-E(v) \\
 & =& D_vE (w_1+w_2) + D^2_vE(w_1+w_2) + o \left( \|w_1+w_2\|_{L_2}^2\right)\\
 & = & D_vE (w_2) + D^2_vE(w_1+w_2) + o \left( \|w_1+w_2\|_{L_2}^2\right)
\end{eqnarray*}
so that 
\[
\left| \gamma^w(1)-\gamma^v(1) \right| \leq KK'|z'-\mathcal{E}(v)|
\] 
for some $K'>0$. Therefore, if $z'$ is given by
\[
z' =  \mbox{Proj}_{V^{u}(\hat{y})}^{\perp} (z) \in V^u(\hat{y}) \cap B_{\rho}(\mathcal{E}(v)) \quad \mbox{with} \quad z \in V^{u}(\gamma^v(1)) \cap B_{\rho}(\gamma^v(1)),
\]
then we have (\ref{STEP2_LEM3_1}), if we denote by $L_E$ a local Lipschitz constant for $E$ then we have (because $|z'-\mathcal{E}(v)|=|z-\gamma^v(1)|$)
\begin{eqnarray*}
\left| \gamma^w(1)-z \right| & \leq & \left| \gamma^w(1)-\gamma^v(1) \right| + |\gamma^v(1)-z| \\
& \leq &  (L_EKK'+1)  \left|z -\gamma^v(1)\right|
\end{eqnarray*}
which gives (\ref{STEP2_LEM3_2}) (for a certain constant), by noting that  $w_1 \in \mbox{Ker}(D_vG)\subset \mbox{Ker}(D_vC)$ and $|z'-\mathcal{E}(v)|=|z-\gamma^v(1)|<\rho < 1$ we have
\begin{eqnarray*}
2C(w) & = &\|v+w_1+w_2\|_{L^2}^2 \\
& = & 2C(v) + 2 \langle v,w_1+w_2\rangle_{L^2} + \|w_1+w_2\|_{L^2}^2 \\
& = & 2C(v)   + 2 \langle v,w_2\rangle_{L^2} + \|w_1+w_2\|_{L^2}^2\\
& \leq & 2C(v)   + 2 \|v\|_{L^2} \|w_2\|_{L^2} + \left( \|w_1\|_{L^2} + \|w_2\|_{L^2}\right)^2 \\
& \leq & 2C(v)   + 2 \left(\|u\|_{L^2}+\delta\right) K \left|z-\gamma^v(1)\right| + \left(2K\sqrt{\left|z-\gamma^v(1)\right|}\right)^2\\
& = & 2C(v)   + K \left(2 \|u\|_{L^2}+2+4K\right) \left|z-\gamma^v(1)\right|,
\end{eqnarray*}
which gives (\ref{STEP2_LEM3_3}) (for a certain constant), and finally by noting that  $w_1 \in  \mbox{Ker}(D_vG)\subset \mbox{Ker}(D_v\bar{C})$ we also have
\begin{eqnarray*}
\left| \| w-u\|_{L^2}^2 -   \| v-u\|_{L^2}^2\right| & = &\left|  \| w-v\|_{L^2}^2 + 2 \langle w-v,v-u\rangle_{L^2} \right| \\
& \leq &   \| w_1+w_2 \|_{L^2}^2   + 2 \left|\langle w_2 ,v-u\rangle_{L^2}\right|  \\
& \leq & \left(2K\sqrt{\left|z-\gamma^v(1)\right|}\right)^2 + 2K \left|z-\gamma^v(1)\right|\\
& = &  2K(2K+1)\left|z - \gamma^v(1)\right|.
\end{eqnarray*}
The proof of Lemma \ref{STEP2_LEM3} is complete.
\end{proof}
 
The compactness results of Lemma \ref{STEP1_LEM1} together with the results of Lemmas \ref{STEP2_LEM2} and \ref{STEP2_LEM3} yield the following result:

\begin{lemma}\label{STEP2_LEM4}
There are $\delta, r , \rho \in (0,1)$, $K>0$, a positive integer $N$, $N$ controls $u^1, \ldots, u^{N}$ in $U$ with $\gamma^{u^l}\in \Gamma^{\hat{y}}$ for $l=1, \ldots, N$ and $N$ linear hyperplanes $V^{u^1}, \ldots V^{u^{N}}$ in $\R^n$ such that the following properties are satisfied: 
\begin{itemize}
\item[(i)]  For every $l\in \{1, \ldots N\}$, every $v\in U$ with $\|v-u^l\|_{L^2} < \delta$ and every $z\in V^{u^l}(\gamma^v(1)) \cap B_{\rho}(\gamma^v(1))$, there is $w \in U$ such that 
 \begin{eqnarray}\label{24oct1}
 \mbox{\em Proj}_{V^{u^l}(\gamma^v(1))}^{\perp} \left(\gamma^{w}(1)\right)=z, 
 \end{eqnarray}
 \begin{eqnarray}\label{24oct1bis}
 \left| \gamma^w(1)-z \right| \leq K \left| z-\gamma^v(1)\right|,
 \end{eqnarray}
 \begin{eqnarray}\label{24oct2}
 C(w) \leq  C(v) + K \left|z - \gamma^v(1)\right|,
 \end{eqnarray}
 \begin{eqnarray}\label{24oct3}
  \left| \| w-u^l\|_{L^2}^2 -  \| v-u^l\|_{L^2}^2 \right| \leq K \left| z-\gamma^v(1)\right|.
 \end{eqnarray}
\item[(ii)]  For any $y \in B_r(\hat{y})$ and $v \in \Gamma^y$, there is $l\in \{1, \ldots, N\}$, such that $\|v-u^l\|_{L^2} < \delta/2$.
\end{itemize}
 \end{lemma}

Pick a smooth nondecreasing function $h:[0,\delta^2/4) \rightarrow [0,+\infty)$ such that 
\[
h(\alpha) = 0 \, \forall \alpha \in [0,\delta^2/4], \quad h(\alpha) > 0 \, \forall \alpha \in (\delta^2/4,\delta^2) \quad \mbox{and} \quad \lim_{\alpha \rightarrow \delta^2} h(\alpha)= +\infty.
\]
Then, for every $l=1, \ldots, N$, define the function $W^l: B_r(\hat{y}) \rightarrow [0,+\infty)$ by
\[
W^l(y) := \inf \left\{ C(u) + h\left(\|u-u^l\|_{L^2}^2\right) \, \vert \, u \in U \mbox{ s.t. } \bar{\gamma}^{u}(1)=y \right\} \qquad \forall y \in B_r(\hat{y}). 
\]
By construction, the functions $W^1, \ldots, W^{\hat{N}}$ are continuous on their domain (remember that the end-point mapping $E:U \rightarrow M$ is open, see {\it e.g.}  \cite[\S 1.4]{riffordbook}) and we have (by Lemma \ref{STEP2_LEM4} (ii) and the construction of $h$)
\[
F(y) =  \min \left\{W^1(y), \ldots, W^{N}(y)\right\} \qquad \forall y \in B_r(\hat{y})
\]
and
\[
F(\hat{y}) = W^1(\hat{y}) = \cdots = W^{N}(\hat{y}). 
\]
Then, for every set $I\subset  \{1, \ldots, N\}$, we define the set $\mathcal{Z}^{I}\subset  B_{r/2}(\hat{y})$ by
\[
\mathcal{Z}^{I} := \Bigl\{ y \in B_{r/2}(\hat{y}) \, \vert \, F(y)=W^k(y) \, \forall k \in I \mbox{ and } F(y)<W^k(y) \, \forall k \notin I \Bigr\}
\]
and we check easily that
\[
B_{r/2}(\hat{y}) = \bigcup_{I\subset \{1,\ldots,N\}} \mathcal{Z}^{I} 
\]
and that we have for every $y\in B_{r/2}(\hat{y})$ and every $I\subset \{1,\ldots,N\}$ (see the end of the proof of Proposition \ref{PROPSTEP1})
\[
y \in \mathcal{Z}^{I} \setminus \mbox{Lip}^-(F) \quad \Longrightarrow \quad y \in \bigcup_{k\in I} \left( \mathcal{Z}^{I} \setminus \mbox{Lip}^-(W^k)\right). 
\]
Since by construction the point $\hat{y}$ is a Lebesgue density point of $\mathcal{A}$ given by the set (\ref{20janv3}) the set 
\[
B_{r/2} (\bar{y}) \setminus \mbox{Lip}^-(F) = \bigcup_{I\subset \{1,\ldots,N\}} \left( \mathcal{Z}^{I} \setminus \mbox{Lip}^-(F) \right) \subset  \bigcup_{I\subset \{1,\ldots,N\}}  \bigcup_{k\in I} \left( \mathcal{Z}^{I} \setminus \mbox{Lip}^-(W^k)\right)
\]
has positive Lebesgue measure and we conclude easily. 
\end{proof}

We denote by $P\subset \R^n$ the vector line orthogonal to $V$, for every $a$ in $B_{r/2}(\hat{y})\cap V(\hat{y})$ we define the piece of affine line $P_a$ given by
\[
P_a := (a+P) \cap B_{r/2}(\hat{y}),
\]
and we denote by $W_{|P_a}$ the restriction of $W$ to $P_a$. By construction, each $W_{|P_a}$ is a continuous function on an open set of dimension $1$. The third step of the proof of Theorem \ref{THM}, which can be seen as the core of the proof, consists in showing that the functions $W$ and $W_{|P_a}$  admit Lipschitz functions from below at the same points.  

\begin{proposition}\label{PROPSTEP3}
There exists $\check{K}>0$ such that for every $a\in B_{r/2}(\hat{y})\cap V(\hat{y})$ and every $\check{y} \in P_a$ the following property is satisfied: If $W_{|P_a}$ admits a support function from below $\varphi$ at $\check{y}$ which is Lipschitz on its domain (with Lipschitz constant $\mbox{\rm Lip}(\varphi)$), then there is $\nu>0$ such that
\[
W(y) \geq W(\check{y}) - \check{K} \left(1+\mbox{\rm Lip}(\varphi)\right) \left|y-\check{y}\right| \qquad \forall y \in B_{\nu}(\check{y});
\] 
in particular, $\check{y}$ belongs to $\mbox{\rm Lip}^-(W)$.
 \end{proposition}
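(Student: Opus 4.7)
The idea is to combine the horizontal Lipschitz-type property of Proposition \ref{PROPSTEP2}(i) with the one-dimensional support function $\varphi$ on $P_a$. Writing $P=V^{\perp}$, for $y\in B_\nu(\check y)$ I decompose $y-\check y=v+p$ with $v\in V$ and $p\in P$, and set $\bar y:=\check y+p=y-v$, so that $\bar y\in V(y)\cap(\check y+P)$. The plan is then to produce, from a near-minimizing control $u$ for $W(y)$, a perturbed control $w$ whose endpoint $y^*:=\gamma^w(1)$ lies in $P_a$, and then to invoke $\varphi$ at $y^*$ to control $W(\check y)-W(y)$.

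\textbf{Step 1 (horizontal move).} Given $\epsilon>0$, pick $u\in U$ with $\gamma^u(1)=y$ and $C(u)+h(\|u-\hat u\|_{L^2}^2)\leq W(y)+\epsilon$. Since $h(\alpha)\to+\infty$ as $\alpha\to\delta^2$ and $W$ is continuous, hence bounded by some $M_0$ on $\overline{B_{r/2}(\hat y)}$, there exists $\alpha^*<\delta^2$ (depending only on $M_0$) such that every such near-optimizer satisfies $\|u-\hat u\|_{L^2}^2\leq\alpha^*$; in particular $\|u-\hat u\|_{L^2}<\delta$. Applying Proposition \ref{PROPSTEP2}(i) with $z=\bar y\in V(y)\cap B_\rho(y)$ (legitimate since $|v|\leq\nu<\rho$) produces $w\in U$ such that
\[
\mbox{Proj}_{V(y)}^{\perp}\bigl(\gamma^w(1)\bigr)=\bar y,\quad |\gamma^w(1)-\bar y|\leq K|v|,\quad C(w)\leq C(u)+K|v|,
\]
together with $\bigl|\|w-\hat u\|_{L^2}^2-\|u-\hat u\|_{L^2}^2\bigr|\leq K|v|$. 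Setting $y^*:=\gamma^w(1)$, the first identity forces $y^*-\bar y\in P$, hence $y^*\in\check y+P$, together with $|y^*-\check y|\leq(K+1)|y-\check y|$; shrinking $\nu$ so that $y^*\in B_{r/2}(\hat y)$ gives $y^*\in P_a$. On the interval $[0,\alpha^*+K\nu]\subset[0,\delta^2)$, the smooth function $h$ admits a Lipschitz constant $L_h$, so (using that $h$ is nondecreasing)
\[
W(y^*)\leq C(w)+h\bigl(\|w-\hat u\|_{L^2}^2\bigr)\leq W(y)+\epsilon+(1+L_h)K|y-\check y|.
\]

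\textbf{Step 2 (using $\varphi$) and conclusion.} For $\nu$ small enough, $y^*$ lies in the domain of $\varphi$, and since $\varphi\leq W_{|P_a}$ with $\varphi(\check y)=W(\check y)$,
\[
W(\check y)=\varphi(\check y)\leq\varphi(y^*)+\mbox{\rm Lip}(\varphi)|y^*-\check y|\leq W(y^*)+\mbox{\rm Lip}(\varphi)(K+1)|y-\check y|.
\]
Combining with Step 1 and letting $\epsilon\to 0$:
\[
W(\check y)-W(y)\leq\bigl((1+L_h)K+(K+1)\mbox{\rm Lip}(\varphi)\bigr)|y-\check y|,
\]
which yields the claim with $\check K:=\max\{(1+L_h)K,\,K+1\}$, uniform in $a$, $\check y$ and $\varphi$. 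Then $y\mapsto W(\check y)-\check K(1+\mbox{\rm Lip}(\varphi))|y-\check y|$ is a Lipschitz support function from below of $W$ at $\check y$, so $\check y\in\mbox{\rm Lip}^-(W)$. The main obstacle is the uniform control in Step 1: one must ensure that the near-optimizer $u$ and its perturbation $w$ both stay uniformly away from the singularity of $h$ at $\delta^2$, in order to treat $h$ as Lipschitz along the move and to legitimately apply Proposition \ref{PROPSTEP2}(i). It is here that the Goh-rank $\leq 1$ hypothesis becomes decisive, since it yields a single codimension-one obstruction $V$ and leaves a one-dimensional transverse direction along $P$ for which the 1D alternative of Proposition \ref{PROPDichotomy} (the source of $\varphi$ in the next step) is available; a higher Goh-rank would leave a transverse direction of dimension $\geq 2$, where no such one-dimensional alternative exists.
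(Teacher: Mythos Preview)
Your proof is correct and follows essentially the same route as the paper's: project $y$ onto the affine line $P_a$ (your $\bar y$ coincides with the paper's $z=\mbox{Proj}_{P_a}^{\perp}(y)$), invoke Proposition~\ref{PROPSTEP2}(i) to move the endpoint into $P_a$ at controlled cost, use the local Lipschitz constant of $h$ away from $\delta^2$, and then compare with $\varphi$ on $P_a$. The one technical difference is that the paper works with an exact minimizer $u$ realizing $W(y)$ (whose existence is later justified via compactness in Lemma~\ref{STEP4_LEM1}), whereas you take $\epsilon$-near-minimizers and let $\epsilon\to 0$; this is a harmless and arguably cleaner variant. Your parenthetical ``using that $h$ is nondecreasing'' is slightly off---what you actually use there is the two-sided Lipschitz bound $L_h$ on $[0,\alpha^*+K\nu]$, not monotonicity---but the estimate itself is correct.
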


 \begin{proof}[Proof of Proposition \ref{PROPSTEP3}]
 First, we note that since $W$ is well-defined and continuous on the closed ball $\bar{B}_{r/2}(\hat{y})$, there is $A>0$ such that $W(y)\leq A$ for all $y\in \bar{B}_{r/2}(\hat{y})$. As a consequence, if we consider some $y\in B_{r/2}(\hat{y})$ and $u\in U$ such that 
\begin{eqnarray}\label{17janv1}
W(y) = C(u) +  h\left(\|u-\hat{u}\|_{L^2}^2\right) \quad \mbox{and} \quad \gamma^u(1)=y,
\end{eqnarray}
then we have $h(\|u-\hat{u}\|_{L^2}^2)\leq A$, hence (because $h(\alpha)$ goes to $+\infty$ as $\alpha$ increases to $\delta^2$) there is $\bar{\delta}\in (0,\delta)$ such that $\|u-\hat{u}\|_{L^2}< \bar{\delta}$. We denote by $L>0$ the Lipschitz constant of $h$ on the set 
$[0,\check{\delta}^2]$ with $\check{\delta}^2:=\bar{\delta}^2+(\bar{\delta}^2+\delta^2)/2$ (remember that $h$ is smooth on its domain).

Let $a\in B_{r/2}(\hat{y})\cap V(\hat{y})$, $\check{y} \in P_a$ and $\varphi$ a function which is Lipschitz (with Lipschitz constant $\mbox{Lip}(\varphi)$) on an open segment of $P_a$ containing $\check{y}$ such that $W_{|P_a}$ admits $\varphi$ as support function from below at $\check{y}$. Given $y \in B_{\rho}(\check{y}) \cap B_{r/2}(\hat{y})$, we consider a control $u\in U$ satisfying (\ref{17janv1}) and we set (see Figure \ref{fig1})
\[
z:=\mbox{Proj}_{P_a}^{\perp}(y).
\]

\begin{figure}[H]
\begin{center}
\includegraphics[width=7cm]{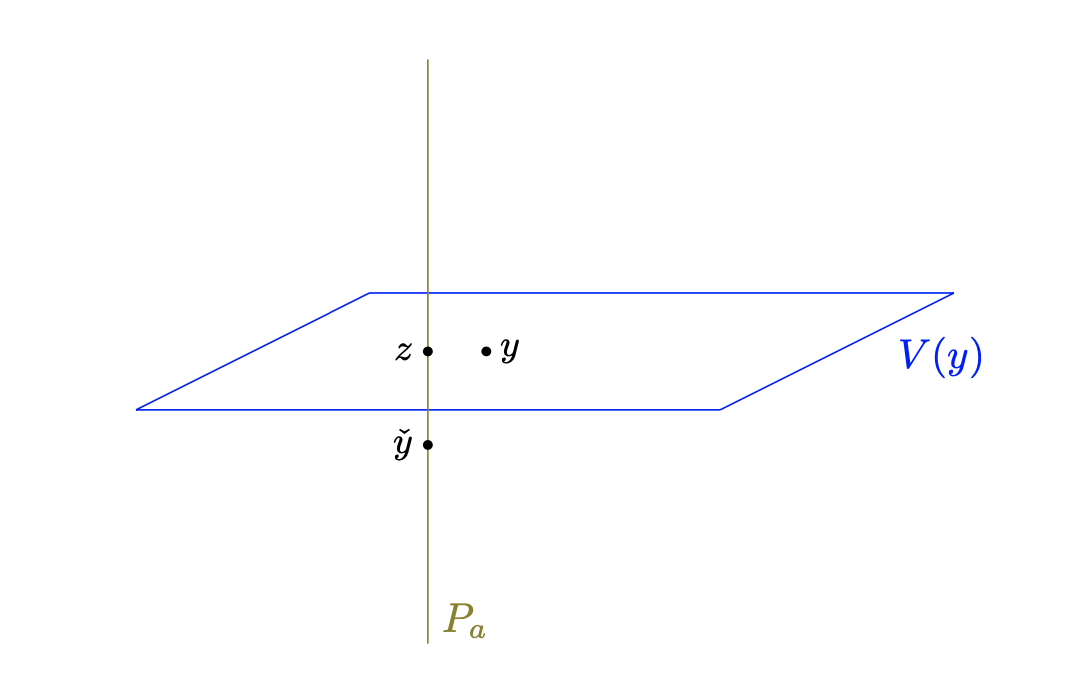}
\caption{$z$ belongs to $P_a\cap V(y)$ \label{fig1}}
\end{center}
\end{figure}

By construction, we have $|y-z| \leq |y-\check{y}| < \rho $ with $y=\gamma^u(1)$ and since $h(\|u-\hat{u}\|_{L^2}^2)<+\infty$ we have $\|u-\hat{u}\|_{L^2}<\delta$.  Therefore, by Proposition \ref{PROPSTEP2}, there is $v\in U$ such that
 \[
 \mbox{Proj}_{V(y)}^{\perp} \left(\gamma^{v}(1)\right)=z, \quad 
 \left|\gamma^v(1)-z \right| \leq K \left| z-y\right|,
 \]
 \[
 C(v) \leq  C(u) + K |z - y| \quad \mbox{and} \quad  \left| \| v-\hat{u}\|_{L^2}^2 -  \| u-\hat{u}\|_{L^2}^2 \right| \leq   K \left| z-y\right|.
 \]
 The properties $\mbox{Proj}_{V(y)}^{\perp}(\gamma^{v}(1))=z$ and $z\in P_a$ imply that $\gamma^{v}(1)\in P_a$. Moreover, we note that if $y$ belongs to the ball center at $\check{y}$ with radius less than $(\check{\delta}^2-\bar{\delta}^2)/K$ then we have 
 \[
 \| u-\hat{u}\|_{L^2}^2 \leq \bar{\delta}^2 \leq \check{\delta}^2 \quad
  \mbox{and} \quad \|v-\hat{u}\|_{L^2}^2 \leq  \| u-\hat{u}\|_{L^2}^2+K |z-y| \leq \bar{\delta}^2+K |y-\check{y}| \leq  \check{\delta}^2,
 \]
 so that
 \[
  h\left(\|v-\hat{u}\|_{L^2}^2\right)\leq h\left(\|u-\hat{u}\|_{L^2}^2\right) + L \left| \| v-\hat{u}\|_{L^2}^2 -  \| u-\hat{u}\|_{L^2}^2 \right| \leq KL |z-y|.
 \]
Consequently, for every $y$ in a ball center at $\check{y}$ with radius less than $(\check{\delta}^2-\bar{\delta}^2)/K$, we have
 \begin{eqnarray*}
  W_{|P_a}  \left(\gamma^v(1)\right) & \leq & C(v) +  h\left(\|v-\hat{u}\|_{L^2}^2\right) \\
 & \leq &  C(u) + K |z -y| +  h\left(\|u-\hat{u}\|_{L^2}^2\right) + KL |z-y|\\
 & \leq & W (y) + K(L+1) \left|z - y\right|.
 \end{eqnarray*}
Moreover, there is $\nu >0$ such that if $y$ belongs to $B_{\nu}(\check{y})$ then $\gamma^v(1)$ belongs to the domain of $\varphi$ (because $|\gamma^v(1)-\check{y}| \leq |\gamma^v(1)-z| +|z-y| + |y-\check{y}| \leq K | z-y| +2|y-\check{y}| \leq (K+2) |y-\check{y}|$), thus if $y\in B_{\nu}(\check{y})$ then we have
 \begin{eqnarray*}
 \varphi \left( \gamma^v(1)\right)\leq W_{|P_a}  \left(\gamma^v(1)\right) \leq W (y) + K(L+1) \left|z - y\right|.
 \end{eqnarray*}
In conclusion, we obtain that for any $y$ sufficiently close to $\check{y}$, there holds (we set $\hat{K}:=K(L+1)$)
 \begin{eqnarray*}
 W(y) & \geq & \varphi \left(\gamma^v(1)\right) - \hat{K} \left| z-y\right|\\
 & \geq & \varphi \left( \check{y}\right) - \mbox{Lip}(\varphi) \left| \gamma^v(1)-\check{y}\right| - \hat{K} \left| z-y\right|\\
 & \geq & \varphi \left( \check{y}\right) - \mbox{Lip}(\varphi)\left( \left|\gamma^v(1)-z\right| + \left|z-y\right| + \left|y-\check{y}\right| \right)- \hat{K} \left| z-y\right|\\
 & \geq & \varphi \left( \check{y}\right) - \mbox{Lip}(\varphi)( \hat{K}+2)  \left|y-\check{y}\right|- \hat{K} \left|y-\check{y}\right|\\
 & \geq & \varphi \left( \check{y}\right) - \check{K} \left( 1+\mbox{Lip}(\varphi) \right) \left|y-\check{y}\right|,
 \end{eqnarray*}
by setting $\check{K}:=\hat{K}+2$. 
\end{proof}

Proposition \ref{PROPSTEP3} allows us to distinguish between two cases. For each $a\in B_{r/2}(\hat{y})\cap V(\hat{y})$ we denote by $\mathcal{K}_a$ the intersection of $\mathcal{K}$ (introduced in  Proposition \ref{PROPSTEP2} (iii)) with $P_a$, that is,
\[
\mathcal{K}_a := \mathcal{K} \cap P_a \quad \forall a\in B_{r/2}(\hat{y})\cap V(\hat{y}).
\]
By Proposition \ref{PROPSTEP2} (iii) the set $\mathcal{K}\subset B_{r/2}(\hat{y})$ has positive Lebesgue measure and moreover by Proposition \ref{PROPDichotomy}, for every $a\in B_{r/2}(\hat{y})\cap V(\hat{y})$ there are two measurable sets $\mathcal{K}_{a}^{i}, \mathcal{K}_{a}^{ii} \subset \mathcal{K}_{a}$ with 
\[
\mathcal{L}^1 \left(  \mathcal{K}_{a}^{i}\cup  \mathcal{K}_{a}^{ii}\right) = \mathcal{L}^1\left( \mathcal{K}_{a}\right)
\]
satisfying the following properties:
\begin{itemize}
\item[(i)] For every $y\in \mathcal{K}_{a}^{i}$, the function $W_{|P_a}$ is differentiable at $y$.
\item[(ii)] For every $y \in \mathcal{K}_{a}^{ii}$, the function $W_{|P_a}$ is not differentiable at $y$ and there is a sequence $\{y_k\}_{k\in \N}$ in $P_a$ converging to $y$ such that $0\in \partial^-W_{|P_a}(y_k)$ for all $k\in \N$, in particular we have $0\in \partial^-_L W_{|P_a}(y)$.
\end{itemize}
We set
\[
\mathcal{K}^{i}:= \bigcup_{a\in B_{r/2}(\hat{y})\cap V(\hat{y}} \mathcal{K}_{a}^{i} \quad \mbox{and} \quad \mathcal{K}^{ii}:= \bigcup_{a\in B_{r/2}(\hat{y})\cap V(\hat{y})} \mathcal{K}_{a}^{ii}
\]
and we note that by Fubini's Theorem, we have
\[
\mathcal{L}^n (\mathcal{K})= \mathcal{L}^n\left(\mathcal{K}^{i}\cup \mathcal{K}^{ii}\right) >0.
\]
Two different cases have to be distinguished: $ \mathcal{L}^n(\mathcal{K}^{i})>0$ or $ \mathcal{L}^n(\mathcal{K}^{i})=0$ and $ \mathcal{L}^n(\mathcal{K}^{ii})>0$. The first case ($\mathcal{L}^n(\mathcal{K}^{i})>0$) leads easily to a contradiction because $\mathcal{L}(\mathcal{K}^{i})>0$ together with Proposition \ref{PROPSTEP3} imply that any point of $\mathcal{K}^{i}\subset \mathcal{K}$ belongs to $\mbox{Lip}^-(W)$ which contradicts Proposition \ref{PROPSTEP2} (iii). Therefore, we assume from now that
\[
\mathcal{L}^n(\mathcal{K}^{i})=0 \quad \mbox{and} \quad \mathcal{L}^n(\mathcal{K}^{ii})>0
\]
and we explain how to get a contradiction. \\

By Fubini's Theorem there is $\check{a}\in B_{r/2}(\hat{y})\cap V(\hat{y})$ such that 
\begin{eqnarray}\label{lastcontradiction}
\mathcal{L}^1 \left( \mathcal{K}_{\check{a}}^{ii}\right) >0. 
\end{eqnarray}
We pick a Lebesgue density point $\check{y}$ of $\mathcal{K}_{\check{a}}^{ii}$ in $P_{\check{a}}$ (w.r.t. $\mathcal{L}^1$) and we notice that by construction of $\mathcal{A}$ (see (\ref{20janv3}) the point $\check{y}$ does not belongs to  $\mathcal{C}_0$ the set of critical values of $\exp_0$. Then we pick a unit vector $\vec{v} \in \R^n$ tangent to $P_{\check{a}}$ and we define the continuous function $W^{\check{a}}: (-r/10,r/10) \rightarrow [0,+\infty)$ by 
\[
W^{\check{a}}(t) := W_{|P_{\check{a}}} \left( \check{y} + t \vec{v}\right)- W_{|P_{\check{a}}} \left( \check{y}\right)\qquad \forall t \in (-r/10,r/10).
\]
The next lemma follows essentially from the construction of $\check{y}$,  Proposition \ref{PROPSTEP3}, the Inverse Function Theorem and the Denjoy-Young-Saks Theorem (see Remark \ref{RemDYS}). 

\begin{proposition}\label{PROPSTEP4}
There are $\mu, \sigma >0$  with $B_{\mu}(\check{y}) \subset B_{r/2}(\hat{y})$ such that the following properties are satisfied: 
\begin{itemize}
\item[(i)] For any $t\in (-\mu,\mu)$ and any $p \in \partial^-W^{\check{a}}(t)$, there holds
\begin{eqnarray*}
|p| \leq 1 \quad \Longrightarrow \quad \Bigl( W^{\check{a}}(s) \leq W^{\check{a}}(t) + p (s-t)+\sigma (s-t)^2 \quad \forall s \in (-\mu,\mu)\Bigr).
\end{eqnarray*} 
\item[(ii)] $D^-W^{\check{a}}(0)=+\infty, D_+W^{\check{a}}(0)=-\infty$ and $D_-W^{\check{a}}(0)=D^+W^{\check{a}}(0)\in \R$.
\end{itemize}
\end{proposition}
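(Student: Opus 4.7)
The plan is to establish (i) as a uniform quadratic majorization of $W^{\check a}$ at every viscosity subgradient with slope bounded by $1$, and then to derive (ii) from (i) by a limiting argument combined with the Denjoy-Young-Saks alternative recalled in Remark \ref{RemDYS}.

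For (i), I would fix $t\in(-\mu,\mu)$, set $y:=\check y+t\vec v$, and consider a minimizer $u\in U$ realizing $W(y)=C(u)+h(\|u-\hat u\|_{L^2}^2)$ with $\gamma^u(1)=y$. Since $\check y\notin\mathcal{C}_0$, Lemma \ref{STEP1_LEM1} combined with Lemma \ref{STEP2_LEM1} guarantees that, after shrinking $\mu$ if necessary, one may select $u$ whose trajectory is non-singular, so that $D_uE$ is surjective; the Inverse Function Theorem then produces a smooth family $s\mapsto u(s)\in U$ with $u(0)=u$ and $\gamma^{u(s)}(1)=\check y+(t+s)\vec v$ on a neighborhood of $0$. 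The function $\Psi(s):=C(u(s))+h(\|u(s)-\hat u\|_{L^2}^2)$ is then $C^2$ in $s$, majorizes $W^{\check a}(t+\cdot)$ locally, and equals $W^{\check a}(t)$ at $s=0$. Its Taylor expansion reads $\Psi(s)=W^{\check a}(t)+\alpha s+O(s^2)$, and the constant in the remainder can be bounded uniformly in $t$ by choosing a right inverse of $D_uE$ of minimal norm. Comparing $\Psi$ to the $C^1$ support function from below of $W^{\check a}$ whose derivative at $t$ equals $p$, a standard two-sided slope argument forces $\alpha=p$, yielding the estimate $W^{\check a}(s)\leq W^{\check a}(t)+p(s-t)+\sigma(s-t)^2$ on $(-\mu,\mu)$.

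For (ii), since $\check y$ is a Lebesgue density point of $\mathcal K^{ii}_{\check a}$, a diagonal extraction from the sequences witnessing membership in $\mathcal K^{ii}_{\check a}$ produces a sequence $t_k\to0$ with $0\in\partial^-W^{\check a}(t_k)$. Applying (i) at each $t_k$ with $p=0$ yields $W^{\check a}(s)\leq W^{\check a}(t_k)+\sigma(s-t_k)^2$ for every $s\in(-\mu,\mu)$; letting $k\to\infty$ and using continuity of $W^{\check a}$ gives the global bound
\[
W^{\check a}(s)\leq W^{\check a}(0)+\sigma s^2\qquad\forall s\in(-\mu,\mu).
\]
This at once forces $D^+W^{\check a}(0)\leq 0$ and $D_-W^{\check a}(0)\geq 0$, which is incompatible with alternatives (2) and (3) of the Denjoy-Young-Saks dichotomy in Remark \ref{RemDYS}. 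Refining the choice of $\check y$ so that it is additionally a Denjoy-Young-Saks point (possible, since both conditions determine full-measure subsets of $\mathcal K^{ii}_{\check a}$) and invoking the non-differentiability of $W^{\check a}$ at $0$ ensured by $\check y\in\mathcal K^{ii}_{\check a}$, alternative (1) is excluded as well, leaving only case (4), which is precisely the conclusion of (ii).

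The central difficulty I anticipate lies in (i): one must check that a non-singular minimizer $u$ of $W(y)$ can actually be selected for every $y$ sufficiently close to $\check y$, uniformly in $t$, so that the Inverse Function construction and the $O(s^2)$ bound are available. Should all $W$-minimizers at some nearby $y$ be singular of Goh-rank $1$, only the linear cost bounds of Proposition \ref{PROPSTEP2} and Lemma \ref{STEP2_LEM3} would apply and the quadratic estimate would break down. I plan to rule this out by exploiting the hypothesis $|p|\leq 1$: an analysis of the Lagrange multiplier system for a singular minimizer whose abnormal hyperplane is transverse to $\vec v$ should force the corresponding subgradients of $W^{\check a}$ to have modulus strictly larger than $1$, so that the bound $|p|\leq 1$ automatically selects the non-singular regime in which the argument of (i) applies.
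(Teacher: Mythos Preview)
Your overall architecture is sound, and your argument for (ii) matches the paper's: apply (i) at the sequence $t_k$ with $p=0$, pass to the limit to get $W^{\check a}(s)\le\sigma s^2$, and then eliminate the Denjoy--Young--Saks alternatives (1)--(3). That part is fine.

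The gap is in (i), precisely where you flag it. Your opening claim---that $\check y\notin\mathcal C_0$ together with Lemmas~\ref{STEP1_LEM1} and~\ref{STEP2_LEM1} lets you \emph{select} a non-singular $W$-minimizer $u$ at every nearby $y$---is not justified: Lemma~\ref{STEP2_LEM1} concerns $\hat y$, not $\check y$, and $\check y\notin\mathcal C_0$ says nothing directly about minimizers at $y\neq\check y$. Your proposed rescue (that a singular minimizer with abnormal hyperplane transverse to $\vec v$ forces $|\partial^-W^{\check a}|>1$) does not go through: the abnormal covector controls the \emph{full} subdifferential $\partial^-W$ (via $\partial^-_{PL}$), not the one-dimensional slope along $\vec v$, and there is no mechanism tying the size of the latter to singularity of the minimizer.

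The paper's route is different and uses the hypothesis $|p|\le1$ only \emph{after} a limiting argument. One defines the compact set $\Theta\subset\Gamma^{\check y}_W$ of all limits of $W$-minimizers $u_k\in\Gamma^{y_k}_W$ along sequences $y_k\to\check y$ for which $\partial^-W(y_k)$ contains a covector of bounded norm (such sequences exist because a bounded $p\in\partial^-W^{\check a}(t)$ yields, via Proposition~\ref{PROPSTEP3} and Proposition~\ref{PROPLip-Lim}, a bounded element of $\partial^-_LW$). For each such sequence the Lagrange multiplier relation reads $p_k\cdot D_{u_k}E=D_{u_k}C+h'(\|u_k-\hat u\|^2)\,D_{u_k}\hat C$; passing to the limit and using $W(\check y)=F(\check y)$ (so $h=h'=0$ at the limit) gives $p\cdot D_uE=D_uC$ for every $u\in\Theta$. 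Thus each $u\in\Theta$ is the control of a \emph{normal} minimizing geodesic to $\check y$, and since $\check y\notin\mathcal C_0$ the differential $D_uE$ is surjective. Compactness of $\Theta$ then gives a finite cover and a uniform Inverse Function Theorem (Lemma~\ref{STEP4_LEM3}), producing $C^2$ local sections $\mathcal G^{l,v}$ of $E$ with uniformly bounded $C^2$-norm; the function $z\mapsto C(\mathcal G^{l,v}(z))+h(\hat C(\mathcal G^{l,v}(z)))$ is your $\Psi$, and now its second derivative is bounded by a constant $\sigma$ independent of $t$. The point is that non-singularity is established \emph{at} $\check y$ (for the limit controls), and then pushed to nearby $y$ by continuity---not the other way around.
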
 

\begin{proof}[Proof of Proposition \ref{PROPSTEP4}]

For every $y\in M$, we denote by $\Gamma^y_W$  the set of controls $u \in L^2([0,1],\R^m)$ such that 
\[
W(y) = C(u) + h\left(\|u-\hat{u}\|_{L^2}^2\right).
\]
The proof of the following result is a consequence of the continuity of $W$ and the fact that $h$ is nondecreasing.

\begin{lemma}\label{STEP4_LEM1}
For every compact set $\mathcal{K}\subset B_{r/2}(\hat{y})$, the set of controls $u \in \Gamma^y_W$ with $y\in \mathcal{K}$ is a compact subset of $L^{2}([0,1],\R^m)$ and the mapping $y\in B_{r/2}(\hat{y}) \mapsto \Gamma^y_W \in \mathcal{C}(L^{2}([0,1],\R^m))$ has closed graph (here $\mathcal{C}(L^{2}([0,1],\R^m))$ stands for the set of compact subsets of $L^{2}([0,1],\R^m)$ equipped with the Hausdorff topology).
\end{lemma}

\begin{proof}[Proof of Lemma \ref{STEP4_LEM1}]
Let $\mathcal{K}$ be a compact subset of $B_{r/2}(\hat{y})$ and $\{u_k\}_{k\in \N}, \{y_k\}_{k\in \N}$ be two sequences respectively in  $L^{2}([0,1],\R^m)$ and $\mathcal{K}$ such that $u_k\in \Gamma_W^{y_k}$ for all $k\in \N$. The sequence $\{y_k\}_{k\in \N}$ is valued in $\mathcal{K}$ compact and since $W$ is bounded on $\mathcal{K}$ (because it is continuous) and
\[
W(y_k) = \frac{1}{2} \|u_k\|_{L^2}^2 + h\left( \|u_k-\hat{u}\|_{L^2}^2\right) \quad \forall k\in \N,
\]
the sequence $\{u\}_{k\in \N}$ is bounded in $L^2([0,1],\R^m)$, so there is an increasing subsequence $\{k_l\}_{l\in \N}$ such that $\{y_{k_l}\}_{l\in \N}$ tends to some $\bar{y}\in \mathcal{K}$ at infinity and $\{u_{k_l}\}_{l\in \N}$ weakly converges to some $\bar{u}\in L^{2}([0,1],\R^m)$. We note that by the Hahn-Banach Separation Theorem (see {\it e.g.} \cite{clarke13}) we have 
\[
\|\bar{u}\|_{L^2} \leq \liminf_{l\rightarrow +\infty} \|u_{k_l}\|_{L^2} \quad \mbox{and} \quad \|\bar{u}-\hat{u}\|_{L^2} \leq  \liminf_{l\rightarrow +\infty} \|u_{k_l}-\hat{u}\|_{L^2},
\]
therefore, since $h$ is nondecreasing, we obtain
\begin{eqnarray*}
\frac{1}{2} \|\bar{u}\|_{L^2}^2 + h\left( \|\bar{u}-\hat{u}\|_{L^2}^2\right)  & \leq &  \liminf_{l\rightarrow +\infty} \frac{1}{2} \|u_{k_l}\|_{L^2}^2 + \liminf_{l\rightarrow +\infty} h\left(\|u_{k_l}-\hat{u}\|_{L^2}^2\right)\\
& \leq &  \liminf_{l\rightarrow +\infty} \left\{\frac{1}{2} \|u_{k_l}\|_{L^2}^2 +  h\left(\|u_{k_l}-\hat{u}\|_{L^2}^2\right)\right\}\\
& = &  \lim_{l\rightarrow +\infty} W(y_{k_l}) = W(\bar{y}). 
\end{eqnarray*}
This shows that $\bar{u}$ belongs to $\Gamma_W^{\bar{y}}$ and that, up to a subsequence, the sequence $\{u_{k_l}\}_{l\in \N}$  convergences strongly to $\bar{u}$ in $L^2([0,1],\R^m)$ (because $\{u_{k_l}\}_{l\in \N}$ converges weakly to $\bar{u}$ and $\|\bar{u}\|_{L^2} = \liminf_{l\rightarrow +\infty} \|u_{k_l}\|_{L^2}$), which concludes the proof of the first part. The second part is left to the reader. 
\end{proof}

We now consider the set $\Theta \subset  \Gamma_W^{\check{y}} \subset L^2([0,1],\R^m)$ defined by
\begin{multline*}
\Theta := \left\{ u \in \Gamma_W^{\check{y}}\, \vert \, \exists  \{y_k\}_{k\in \N} \mbox{ in } B_{r/2}(\hat{y}),  \{p_k\}_{k\in \N} \mbox{ in } (\R^n)^*,  \{u_k\}_{k\in \N} \mbox{ in } L^2([0,1],\R^m)  \mbox{ s.t. }  \right. \\
\left. \lim_{k\rightarrow +\infty}y_k = \check{y}, \,  \lim_{k\rightarrow +\infty}u_k = u \mbox{ and } u_k\in \Gamma_W^{y_k}, \, p_k \in \partial^- W(y_k), \, |p_k|\leq 2\check{K}+1 \, \forall k \in \N\right\}.
\end{multline*}
 In the following result, the first part follows from the property (ii) above together with Lemma \ref{STEP4_LEM1} and the second part is due to the fact that $\check{y}$ is in $\mathcal{K}_{\check{a}}^{ii}$ and so does not belong to  $\mathcal{C}_0$.

\begin{lemma}\label{STEP4_LEM2}
The set $\Theta$ is a nonempty compact subset of $L^2([0,1],\R^m)$ and for every $u\in \Theta$ there are  $v^1, \ldots, v^n \in L^2([0,1],\R^m)$ such that the linear mapping 
\[
\lambda =(\lambda_1, \ldots, \lambda_n) \in \R^n \, \longmapsto \, D_uE \left( \sum_{i=1}^m \lambda_i v^{i}\right) \in \R^n
\]
is invertible at $\lambda=0$.
\end{lemma}
\begin{proof}[Proof of Lemma \ref{STEP4_LEM2}]
By construction of $\check{y}$ and the property (ii) above, there is a sequence $\{\bar{y}_k\}_{k\in \N}$ in $P_{\check{a}}$ converging to $\check{y}$ such that $0\in \partial^-W_{|P_{\check{a}}}(\bar{y}_k)$ for all $k\in \N$. Note that this property has to be understood as $0\in \partial^-W^{\check{a}}(\bar{t}_k)$ for all $k\in \N$, where the $\bar{t}_k$'s are defined by $\check{y}+\bar{t}_k\vec{v}=\bar{y}_k$. Proposition \ref{PROPSTEP3} shows that, in fact,  for any $k\in \N$ the function $W$ admits a support function from below at $\bar{y}_k$ which is Lipschitz $(2\check{K})$-Lipschitz on its domain. Thus, Proposition \ref{PROPLip-Lim} gives for every $k\in \N$ a co-vector $\bar{p}_k\in \partial^-_LW(\bar{y}_k)$ such that $|\bar{p}_k| \leq 2\check{K}$. By definition of $\partial^-_LW$ we infer that there is a sequence $\{y_k\}_{k\in \N}$ in $B_{r/2}(\hat{y})$ along with a sequence  $\{p_k\}_{k\in \N}$ in $(\R^n)^*$ such that 
\[
 \lim_{k\rightarrow +\infty}y_k = \check{y} \quad \mbox{and} \quad   p_k \in \partial^- W(y_k), \, |p_k|\leq 2\check{K}+1 \quad \forall k \in \N.
 \]
By taking a control $u_k$ in each $\Gamma_W^{y_k}$ and applying Lemma \ref{STEP4_LEM1}, we conclude that $\Theta$ is not empty. The compactness of $\Theta$ is an easy consequence of Lemma \ref{STEP4_LEM1}. 

Let us now prove the last part of Lemma \ref{STEP4_LEM2} and fix some $u\in \Theta$. By definition, there are sequences $\{y_k\}_{k\in \N}$, $ \{p_k\}_{k\in \N}$, $ \{u_k\}_{k\in \N}$ respectively in $B_{r/2}(\hat{y})$, $(\R^n)^*$ and $L^2([0,1],\R^m)$ such that 
\[
 \lim_{k\rightarrow +\infty}y_k = \check{y}, \,  \lim_{k\rightarrow +\infty}u_k = u \mbox{ and } u_k\in \Gamma_W^{y_k}, \, p_k \in \partial^- W(y_k), \, |p_k|\leq 2\check{K}+1 \, \forall k \in \N.
 \]
 Thus, for each $k\in \N$, there is a support function from below $\varphi_k: \mathcal{U}_k \rightarrow \R$ of class $C^1$ on its domain $\mathcal{U}_k \subset B_{r/2}(\hat{y})$ with $d\varphi_k(y_k)=p_k$ such that (we define $\hat{C}:U\rightarrow \R$ by $\hat{C}(u):=\|u-\hat{u}\|_{L^2}$ for all $u\in U$)
\begin{multline*}
C(u_k) + h\bigl( \hat{C}(u_k)\bigr) = W(y_k) = \varphi(y_k) \quad \\
\mbox{and} \quad C(u) + h\bigl( \hat{C}(u)\bigr) \geq W\left(E(u)\right) \geq \varphi \left(E(u)\right) \quad \forall u\in U_k,
\end{multline*}
where $U_k$ is an open neighborhood of $u_k$ in $U$ such that  $E(U_k)\subset \mathcal{U}_k$. Then, we infer that 
\[
p_k\cdot D_{u_k}E = D_{u_k}C +  h'\left(\|u_k-\hat{u}\|_{L^2}^2\right) \cdot D_{u_k}\hat{C} \qquad \forall k \in \N.
\]
By compactness (all $p_k$ satisfy $|p_k|\leq 2\check{K}+1$) and up to a subsequence, $\{p_k\}_{k\in \N}$ converges to some $p\in \partial^-_LW(\check{y})$ and in addition 
$u_k-\hat{u}$ converges in $L^2([0,1],\R^m)$ to $u-\hat{u}$ which satisfies $h'(\|u-\hat{u}\|_{L^2})=h(\|u-\hat{u}\|_{L^2})=0$ (by Proposition \ref{PROPSTEP2} (ii) and because $u\in \Gamma_W^{\check{y}}$ and $W(\check{y})=F(\check{y})$). The, by passing to the limit we obtain
 \[
 p\cdot D_uE = D_uC.
 \]
By Proposition \ref{PROPsub1}, we infer that $\check{y}$ belongs to the image of $\exp_0$ and since $\check{y}\notin \mathcal{C}_0$ the result follows. 
\end{proof}

The following result is an easy consequence of the Inverse Function Theorem and Lemma \ref{STEP4_LEM2}, its proof is left to the reader.

\begin{lemma}\label{STEP4_LEM3}
There are $\check{\delta}, \check{\rho} \in (0,1)$, $\check{M}>0$, a positive integer $N$, $N$ controls $u^1, \ldots, u^{N}$ in $\Theta$ such that the following properties are satisfied: For every $l\in \{1, \ldots N\}$, every $v\in U$ with $\|v-u^l\|_{L^2} < \check{\delta}$, there is a mapping $\mathcal{G}^{l,v}:B_{\check{\rho}}(\gamma^v(1)) \rightarrow U$ such that
\[
E\left( \mathcal{G}^{l,v} (y)\right) = y \qquad \forall y \in B_{\check{\rho}}(\gamma^v(1))
\]
and
\[
\bigl\| \mathcal{G}^{l,v} \bigr\|_{C^2} \leq \check{M}.
\]
\end{lemma}

We are ready to complete the proof of Proposition \ref{PROPSTEP4}. By construction of $\Theta$ there is $\mu>0$ such that for any $y\in  P_{\check{a}} \cap B_{\mu}(\check{y})$ for which  $\partial^-W_{|P_{\check{a}}}(y)$ admits a co-vector of norm $\leq 1$ we have that any $v$ in $\Gamma_W^y$ satisfies $\|v-u^l\|_{L^2} < \check{\delta}$ for some $l\in \{1, \ldots, N\}$. Therefore, if we consider $y\in  P_{\check{a}} \cap B_{\mu}(\check{y})$, $p \in \partial^-W_{|P_{\check{a}}}(y)$ with $|p|\leq 1$ and $v \in \Gamma_W^y$ then we have 
\[
W(z) \leq C\left( \mathcal{G}^{v}(z)\right) + h\left( \tilde{C}\left( \mathcal{G}^{v}(z)\right)\right) \qquad \forall z \in B_{\check{\rho}}(y)
\]
and moreover if $\varphi: \mathcal{I} \rightarrow \R$  is a support function from below for $W_{|P_{\check{a}}}$ at $y$, which is $C^1$ on an open segment containing $y$ in $P_{\check{a}}$ and verifies $d\varphi(y)=p$, then we also have 
\[
\varphi (z) \leq W_{|P_{\check{a}}}(z) \leq C\left( \mathcal{G}^{v}(z)\right) + h\left( \tilde{C}\left( \mathcal{G}^{v}(z)\right)\right) \qquad \forall z \in P_{\check{a}} \cap \mathcal{I} \cap B_{\check{\rho}}(y).
\]
We infer that the differential at $y$ of the function 
\[
z \longmapsto  C\left( \mathcal{G}^{v}(z)\right) + h\left( \tilde{C}\left( \mathcal{G}^{v}(z)\right)\right)
\]
is equal to $p$ and we conclude easily by noting that Lemma \ref{STEP4_LEM3} allows to obtain an upper bound for the $C^2$-norm of that function.

To prove (ii) we note that, by the property (ii) above, there is a sequence $\{t_k\}_{k\in \N}$ in $(-\mu,\mu)$ converging to $0$ such that $0\in \partial^-W^{\check{a}}(t_k)$ for all $k\in \N$ which by (i) yields
\[
W^{\check{a}}(s) \leq W^{\check{a}}(t_k) +\sigma (s-t_k)^2 \quad \forall s \in (-\mu,\mu), \, \forall k \in \N.
\]
Hence by passing to the limit we infer that we have (note that $W^{\check{a}}(0)=0$)
\begin{eqnarray}\label{27janv1}
W^{\check{a}}(s) \leq \sigma s^2 \quad \forall z \in P_{\check{a}} \cap B_{\mu}(\check{y}).
\end{eqnarray}
By Denjoy-Young-Saks' Theorem (see Remark \ref{RemDYS}), since $W^{\check{a}}$ is not differentiable at $\check{a}$, one of the following properties is satisfied:
\begin{itemize}
\item[(2)] $D^+W^{\check{a}}(0)=D^-W^{\check{a}}(0)=+\infty$ and $D_+W^{\check{a}}(0)=D_-W^{\check{a}}(0)=-\infty$,
\item[(3)] $D^+W^{\check{a}}(0)=+\infty, D_-W^{\check{a}}(0)=-\infty$ and $D_+W^{\check{a}}(0)=D^-W^{\check{a}}(0)\in \R$,
\item[(4)] $D^-W^{\check{a}}(0)=+\infty, D_+W{\check{a}}(0)=-\infty$ and $D_-W^{\check{a}}(0)=D^+W^{\check{a}}(0)\in \R$.
\end{itemize}
But  the properties (2)-(3) are prohibited by (\ref{27janv1}), so the proof is complete. 
\end{proof}
 
The final contradiction will be a consequence of the following:

\begin{proposition}\label{PROPSTEP5}
Let $\epsilon, \sigma>0$, $a,b \in \R$ with $b>a$ be such that
\begin{eqnarray}\label{9dec0}
\epsilon \geq \frac{(b-a)\sigma}{4}
\end{eqnarray}
and let $h:[a,b]\rightarrow \R$ with $h(a)=h(b)=0$ be a continuous function such that for any $s\in (a,b)$ and $p\in \partial^-h(s)$ there holds
\begin{eqnarray}\label{9dec1}
|p| \leq \epsilon \quad \Longrightarrow \quad \Bigl( h(s') \leq h(s) + p (s'-s)+\sigma (s'-s)^2 \quad \forall s \in [a,b]\Bigr).
\end{eqnarray} 
Then we have 
\begin{eqnarray}\label{9dec2}
h(s) \geq  D(s):= \max\Bigl\{-\epsilon (s-a), \epsilon (s-b)\Bigr\} \qquad \forall s \in [a,b].
\end{eqnarray}
\end{proposition}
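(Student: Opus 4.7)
The plan is to argue by contradiction: assume that $h(\bar s) < D(\bar s)$ at some $\bar s \in (a,b)$, and since $D(a)=D(b)=0=h(a)=h(b)$, the continuous function $h-D$ attains a strictly negative minimum at a point $s^* \in (a,b)$. Writing $c := (h-D)(s^*) < 0$, the function $D+c$ is a support function from below for $h$ at $s^*$, and I use it to extract a viscosity subgradient of $h$ at $s^*$ of small absolute value. If $s^* \neq (a+b)/2$ then $D$ is affine with slope $\pm\epsilon$ near $s^*$, so $D'(s^*) \in \partial^- h(s^*)$ and $|D'(s^*)| = \epsilon$. If $s^* = (a+b)/2$, then $D$ has a downward "V"-kink at $s^*$, and a direct inspection of the two one-sided affine branches of $D+c$ shows that every $p \in [-\epsilon,\epsilon]$ yields a $C^1$ support function from below for $h$ at $s^*$, so $[-\epsilon,\epsilon] \subset \partial^- h(s^*)$.

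Having secured some $p \in \partial^- h(s^*)$ with $|p|\leq \epsilon$, I invoke the hypothesis (\ref{9dec1}) to obtain the global parabolic upper bound
\[
h(s') \leq h(s^*) + p(s'-s^*) + \sigma(s'-s^*)^2 \qquad \forall s' \in [a,b].
\]
Evaluating at the two endpoints $s'=a,b$ (where $h$ vanishes) produces lower bounds on $h(s^*)$, which I then combine with the identity $h(s^*) = D(s^*) + c$ to try to force $c \geq 0$, contradicting $c<0$. In the kink case $s^* = (a+b)/2$, the natural choice is $p=0$, giving $h(s^*) \geq -\sigma(b-a)^2/4$; comparing with $D(s^*) = -\epsilon(b-a)/2$ and invoking $\epsilon \geq (b-a)\sigma/4$ yields the contradiction. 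In the non-kink case (WLOG $s^* < (a+b)/2$, $p = -\epsilon$), evaluating at $s'=b$ gives $h(s^*) + \epsilon(s^*-a) \geq \epsilon(b-a) - \sigma(b-s^*)^2$, and the position of $s^*$ forced by the earlier inequalities (in particular, the minimum of $h-D$ being non-negative at the boundary implies $s^*$ lives in the interior) closes the estimate.

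The step I expect to be the main obstacle is the subgradient computation at the kink $s^* = (a+b)/2$: one must verify carefully that the "V"-shaped lower barrier $D+c$ actually supplies the entire interval $[-\epsilon,\epsilon]$ of viscosity subgradients, rather than merely the two corner slopes $\pm\epsilon$, since it is a central value of $p$ (and in particular $p=0$) that makes the endpoint evaluations tight against $\epsilon \geq (b-a)\sigma/4$. A secondary technical point is the bookkeeping: the constants in the endpoint inequalities must be tracked precisely, because the quantitative condition $\epsilon \geq (b-a)\sigma/4$ is used in a saturated way, and any off-by-factor-of-two would leave a gap. Once the subgradient inclusion at the kink is settled, the remainder is a routine optimization of the parabolic upper bound over the available choices of $p$ and test endpoint $s' \in \{a,b\}$.
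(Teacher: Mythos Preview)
Your approach mirrors the paper's, but both share the same fatal quantitative gap: the hypothesis $\epsilon \geq (b-a)\sigma/4$ is too weak, and in fact the proposition as stated is false. Your worry about the kink subgradient is misplaced --- the inclusion $[-\epsilon,\epsilon]\subset\partial^-h(s^*)$ at $s^*=(a+b)/2$ is correct, since $D+c$ is a convex ($\vee$-shaped) lower barrier there. The real obstacle is the bookkeeping you flagged as secondary. In the kink case with $p=0$ and $s'=a$, you obtain $h(s^*)\ge -\sigma(b-a)^2/4$, while $h(s^*)<D(s^*)=-\epsilon(b-a)/2$; together these yield only $\epsilon<\sigma(b-a)/2$, which does \emph{not} contradict $\epsilon\ge\sigma(b-a)/4$. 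In the non-kink case $s^*\in(a,(a+b)/2)$ you correctly identify $p=D'(s^*)=-\epsilon$ (the paper slips a sign and writes $+\epsilon$, which accidentally makes its arithmetic close in that branch); with the correct sign, evaluating at $s'=b$ gives $\epsilon(b-a)<\sigma(b-s^*)^2<\sigma(b-a)^2$, i.e.\ $\epsilon<\sigma(b-a)$, and evaluating at $s'=a$ gives nothing. Neither contradicts the stated hypothesis.

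A concrete counterexample: on $[0,1]$ take $\sigma=4$, $\epsilon=1$ (so $\epsilon=\sigma(b-a)/4$) and $h(s)=2s(s-1)$. Then $h(0)=h(1)=0$, and since $h''\equiv 4$ one has $h(s')=h(s)+h'(s)(s'-s)+2(s'-s)^2\le h(s)+h'(s)(s'-s)+4(s'-s)^2$ for all $s,s'$, so (\ref{9dec1}) holds; yet $h(1/4)=-3/8<-1/4=D(1/4)$. Your argument does close, exactly along the lines you outline, once the hypothesis is strengthened to $\epsilon\ge\sigma(b-a)$ (which is also sharp, witnessed by the parabola $h(s)=\sigma(s-a)(s-b)$); this is harmless for the paper's downstream application, where $b-a\le t_k\to 0$.
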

\begin{proof}[Proof of Proposition \ref{PROPSTEP5}]
Suppose for contradiction that (\ref{9dec2}) does not hold and consider a global minimum $\bar{s} \in (a,b)$ of the function $h-D$ on $[a,b]$. So, we have $h(\bar{s})<D(\bar{s})$. If $\bar{s}$ belongs to $(0,(a+b)/2)$, then we have $0\in \partial^-(h-D)(\bar{s})=  \partial^-h(\bar{s})-\epsilon$, hence we infer that $\epsilon \in \partial^-h(\bar{s})$ which by (\ref{9dec1}) yields
\[
h(a) =0 \leq  h(\bar{s}) + \epsilon (a-\bar{s}) + \sigma (a-\bar{s})^2.
\]
Thus, we have
\[
 \epsilon (\bar{s}-a) - \sigma (\bar{s}-a)^2  \leq  h(\bar{s}) < D(\bar{s}) =  -\epsilon (\bar{s}-a),
\]
that is,
\[
2 \epsilon (\bar{s}-a) < \sigma (\bar{s}-a)^2 \quad \Longleftrightarrow \quad \epsilon < \frac{(\bar{s}-a)\sigma}{2} < \frac{(b-a)\sigma}{4}, 
\]
which contradicts (\ref{9dec0}). If $\bar{s}=(a+b)/2$, then we have 
\[
h(s)-D(s) \geq h(\bar{s})-D(\bar{s}) \qquad \forall s \in [a,b],
\]
which implies that
\[
h(s) \geq h(\bar{s})-D(\bar{s}) + D(s) \qquad \forall s \in [a,b],
\]
where the inequality becomes an equality for $s=\bar{s}$. Since the function $s\in [a,b] \mapsto h(\bar{s})-D(\bar{s}) + D(s)$ admits a minimum at $s=\bar{s}$, we infer that $0\in \partial^-h(\bar{s})$. Then,  (\ref{9dec1}) along with $h(a)=h(b)=0$ yield
\[
\left\{
\begin{array}{rcl}
h(a)  =  0 & \leq & h(\bar{s})  + \sigma (a-\bar{s})^2  = h(\bar{s})  + \sigma \frac{(b-a)^2}{4} \\
h(b)  =  0 & \leq & h(\bar{s})  + \sigma (b-\bar{s})^2 = h(\bar{s})  + \sigma \frac{(b-a)^2}{4}.
\end{array}
\right.
\]
Thus, we have 
\[
\left\{
\begin{array}{rcl}
 - \sigma \frac{(b-a)^2}{4} & \leq & h(\bar{s}) < D(\bar{s}) = -\frac{\epsilon(b-a)}{2}   \\
 - \sigma \frac{(b-a)^2}{4} & \leq & h(\bar{s}) <  D(\bar{s}) = -\frac{\epsilon(b-a)}{2}.
\end{array}
\right.
\]
We infer that
\[
- \sigma \frac{(b-a)}{4} +\frac{\epsilon}{2} < 0 < \sigma \frac{(b-a)}{4} -\frac{\epsilon}{2},
\]
a contradiction. The case $\bar{s} \in ((a+b)/2,b)$  is left to the reader. 
\end{proof}

Recall that there is a sequence $\{t_k\}_{k\in \N}$ in $(-\mu,\mu)$ converging to $0$ such that $0\in \partial^-W^{\check{a}}(t_k)$ for all $k\in \N$. We may assume without loss of generality that $\{t_k\}_{k\in \N}$ is contained in $(0,\mu)$ and is decreasing.  Proposition \ref{PROPSTEP4} (i) gives
\begin{eqnarray}\label{30janv}
0 = W^{\check{a}}(0) \leq W^{\check{a}}(t_k) +\sigma t_k^2 \quad   \forall k \in \N.
\end{eqnarray}
Let us distinguish two cases. \\

\noindent First case: There is $k\in \N$ such that $t_k \leq \sigma/4$ and $W^{\check{a}}(t_k)\geq 0$.\\
Since $W^{\check{a}}(0)=0$ and $W^{\check{a}}$ is continuous there is $b\in (0,t_k]$ such that $W^{\check{a}}(b)=0$. Therefore, by Proposition \ref{PROPSTEP4} (i), the function $h=W^{\check{a}}:[0,b] \rightarrow \R$ satisfies the assumptions of Proposition \ref{PROPSTEP5} (with $\epsilon=1$, $a=0$ and $(b-a)\sigma/4 \leq t_k\sigma/4\leq 1$)  and as a consequence we have 
\[
W^{\check{a}} (t) \geq \max\Bigl\{a-t, t-b\Bigr\} \qquad \forall t \in [a,b].
\]
This property contradicts the property $D_+W^{\check{a}}(0)=-\infty$ given by Proposition \ref{PROPSTEP4} (ii).\\

\noindent Second case: $W^{\check{a}}(t_k)< 0$ for all $k$ large.\\
For every $k$ large, we define the continuous function $h_k:[0,t_k] \rightarrow \R$ by 
\[
h_k(t) := W^{\check{a}}(t) - \frac{t}{t_k} \, W^{\check{a}}(t_k) \qquad \forall t \in [0,t_k].
\]
We have $h_k(0)=h_k(t_k)=0$ and moreover any $p$ in $\partial^-h_k(t)$ with $t\in [0,t_k]$ satisfies (by (\ref{30janv}))
\[
p + \frac{1}{t_k} W^{\check{a}}(t_k)  \in \partial^-W^{\check{a}}(t) \quad \mbox{with} \quad \left| \frac{1}{t_k} W^{\check{a}}(t_k) \right| = - \frac{1}{t_k} W^{\check{a}}(t_k) \leq \sigma t_k.
\]
Thus, by Proposition \ref{PROPSTEP4} (i), we infer that if $\sigma t_k\leq 1/2$ then we have for any $p$ in $\partial^-h_k(t)$ with $t\in [0,t_k]$ and $|p| \leq 1/2$,
\begin{eqnarray*}
 W^{\check{a}}(s) \leq W^{\check{a}}(t) + \left(p+  \frac{1}{t_k} W^{\check{a}}(t_k) \right) (s-t)+\sigma (s-t)^2 \quad \forall s \in [0,t_k].
\end{eqnarray*}
which gives
\begin{eqnarray*}
 h_k(s) \leq h_k(t) + p(s-t)+\sigma (s-t)^2 \quad \forall s \in [0,t_k].
\end{eqnarray*}
As in the first case, by applying Proposition \ref{PROPSTEP5}  we obtain a contradiction to the property $D_+W^{\check{a}}(0)=-\infty$. So, the proof of Theorem \ref{THM} is complete.

\section{Final comments}\label{SECComments}
 
\subsection{Domains of the limiting subdifferential of continuous functions}\label{SEC8jan} 
 
Given a continuous function $f:\mathcal{O} \rightarrow \R$ defined on an open set $\mathcal{O}$ of $\R^n$, we call domain of its limiting subdifferential, denoted by  $\mbox{dom} (\partial^-_L f)$, the set of $x\in \mathcal{O}$ where $\partial^-_Lf(x)$ is not empty. In Section \ref{SECAlter}, we proved that if $n=1$ then $\mbox{dom} (\partial^-_L f)$ has full Lebesgue measure in $\mathcal{O}$. But the we do not know the answer to the following:\\

\noindent {\bf Open question.} If $n\geq 2$, does $\mbox{dom} (\partial^-_L f)$ have full Lebesgue measure in $\mathcal{O}$?\\

We expect the answer to be No, even if $f$ is locally Hölder continuous. Nevertheless, since pointed sub-Riemannian distances are (locally) Hölder continuous (this is a consequence of Ball-Box Theorem, see {\it e.g.} \cite{abb17,bellaiche96,montgomery02}), a positive answer to the above question in the Hölder continuous case would have some interesting consequence on the image of sub-Riemannian exponential mappings, see (\ref{8janexp}) below.
 
\subsection{On the limiting subdifferentials of $f_x$}\label{SEClimitingmin}

Let $M$ be a smooth manifold equipped with a complete sub-Riemannan structure $(\Delta,g)$. The sub-Riemannnian Hamiltonian $H:T^*M \rightarrow \R$ canonically associated with $(\Delta,g)$ is defined by 
\[
(x,p) \, \longmapsto \frac{1}{2} \max \left\{ \frac{p(v)^2}{g_x(v,v)} \, \vert \, v \in \Delta(x) \setminus \{0\} \right\}
\] 
in local coordinates in $T^*M$. If we denote by $\phi^H_t$ the Hamiltonian flow (given by  $H$ w.r.t. the canonical symplectic 
structure on $T^*M$) then for every $x\in M$, the exponential mapping $\exp_x:T_x^*M \rightarrow M$ is defined by
\[
\exp_x(p) := \pi \left( \phi^H_1(x,p)\right) \qquad \forall p \in T_x^*M, 
\]
where $\pi:T^*M \rightarrow M$ stands for the canonical projection, and it satisfies 
\[
\exp_x(\lambda \, p) := \pi \left( \phi^H_{\lambda}(x,p)\right) \qquad \forall p \in T_x^*M, \, \forall \lambda \geq 0. 
\]
Let $x\in M$ be fixed, we define the set $\mathcal{P}_x^{min}\subset T_x^*M$, called {\it minimizing domain} of the exponential mapping $\exp_x$,  as
\[
\mathcal{P}_x^{min} := \left\{ p \in T^*_xM \, \vert \, d_{SR}\left(x, \exp_x(p)\right)^2 = 2H(x,p) \right\},
\]
it is the set of co-vectors $p\in T_x^*M$ for which the horizontal path $\gamma_p:[0,1] \rightarrow M$, given by $\gamma_p(t):=\pi(\phi_t^H(x,p))$ for all $t\in [0,1]$ is minimizing from $x$ to $\gamma_p(1)=\exp_x(p)$. Proposition \ref{PROPsub2} shows that we have
\begin{eqnarray}\label{8janexp}
\mbox{dom} \left(\partial^-_L f_x\right) \subset \exp_x \left( \mathcal{P}_x^{min}\right),
\end{eqnarray}
where $f_x:=d_{SR}^x(\cdot)^2/2$ and $\mbox{dom} (\partial^-_L f_x)$ denotes the set of $y\in M$ such that $\partial^-_L f_x(y)$ is non-empty. We do not know the answer to the following:\\

\noindent {\bf Open question.} Do we have $\mbox{dom} \left(\partial^-_L f_x\right) = \exp_x \left( \mathcal{P}_x^{min}\right)$ ?\\

It is worth to notice that the property "$\partial^-_Lf_x(y)\neq \emptyset$ for almost every $y\in M$" is not listed in Proposition \ref{PROPcharac}. We do not know either if this property is sufficient for the minimizing Sard conjecture to hold true. 

\subsection{Normal containers at infinity}

We keep here the same notations as in the previous section. Given $x\in M$, we denote by $\ell_x$ the set of all sequences $\{p_k\}_k$ in $T_x^*M$ such that 
\[
\lim_{r\rightarrow + \infty} |p_k|_{x} = +\infty \quad \mbox{and} \quad \lim_{k\rightarrow +\infty} H(x,p_k) = \frac{1}{2}. 
\]
By compactness, we can associate to each sequence $\{p_k\}_k$ in $\ell_x$ a set of horizontal paths starting from $x$. As a matter of fact, each $p_k$ gives rise to an horizontal path $\gamma_{p_k}:[0,+\infty) \rightarrow M$ (by setting $\gamma_{p_k}(t):=\pi(\phi_t^H(x,p_k))$ for all $t\geq 0$) and since $H(x,p_k)$ tends to $1/2$ all those curves are uniformly Lipschitz on each interval $[0,T]$ with $T>0$. So by Arzela-Ascoli's Theorem, the sequence $\{\gamma_{p_k}\}_k$ converges uniformly on compact sets, up to subsequences, to horizontal paths on $[0,+\infty)$ starting from $x$. We denote by $\Gamma_x^{\infty}$ the set of all such paths and we call it the {\it normal container at infinity} from $x$. By construction any path of $\Gamma_x^{\infty}$ is singular.

%In the case of the Heisenberg group $\H^1$ (see \cite[\S 2.5 p. 70]{riffordbook}), the sub-Riemannian structure $(\Delta,g)$ in $M=\R^3$ (with coordinates $(x,y,z)$) is given by the rank-$2$ distribution spanned by the vector fields 
%\[
%X=\partial_x - \frac{y}{2}\partial_z \quad \mbox{and} \quad X=\partial_y + \frac{x}{2}\partial_z
%\]
%and the metric $g=dx^2+dy^2$ (making the family $\{X,Y\}$ orthonormal). Given $p=(p_x,p_y,p_z)\in T^*_{0_3}\R^3$, with $p_x^2+p_y^2>0$ and $|p_z| \neq 0$, the projection of the solution $t\in [0,+\infty) \rightarrow ((x(t),y(t),z(t))$ of the sub-Riemannian Hamiltonian system starting at $(0_3,p)$ is given by 
%\[
%\left\{
%\begin{array}{rcl}
%x(t) & = & \frac{p_y}{p_z} \left( \cos (p_z t) -1\right) + \frac{p_x}{p_z} \sin(p_z t)\\ 
%y(t) & = &  - \frac{p_x}{p_z} \left( \cos (p_z t) -1\right) + \frac{p_y}{p_z}  \sin(p_z t)\\
%z(t) & = & \frac{p_x^2+p_y^2}{2p_z} \left( t - \frac{\sin(p_zt)}{p_z}\right)
%\end{array}
%\right.
%\qquad \forall t\geq 0.
%\]
%As a consequence, in the case of $\H^1$, the normal container at infinity from $0_3$ is trivial, it is given by $\Gamma_{0_3}^{\infty}=\{\bar{\gamma}\}$ with $\bar{\gamma}(t) =0_3$ for all $t\geq 0$. \\

%\noindent {\bf Open question.} Which sub-Riemannian structures do have trivial  do we have $\mbox{dom} \left(\partial^-_L f_x\right) = \exp_x \left( \mathcal{P}_x^{min}\right)$ ?\\

We call {\it minimizing normal container at infinity } from $x$,  the set of minimizing horizontal paths $\gamma:[0,1] \rightarrow M$ obtained as uniform limits of paths  $\gamma_{p_k}:[0,1] \rightarrow M$ where  $\{p_k\}_k$ is a sequence in $T_x^*M$ such that 
 \[
 p_k \in \mathcal{P}_x^{min} \quad \forall k \quad \mbox{and} \quad \lim_{r\rightarrow + \infty} |p_k|_{x} = +\infty.
\]
By construction, we have 
\[
\Gamma_x^{\infty,min}([0,1])  \subset \Gamma_x^{\infty} \left([0,+\infty)\right).
\] 
By Proposition \ref{PROPcharac}, the set $\mbox{Abn}^{min}(x)$ has Lebesgue measure zero in $M$ if and only if there holds $\partial^-_{PL}f_x(y)= \emptyset$ for almost every $y\in M$. Moreover, we infer easily from Proposition \ref{PROPGohLip} that for any $(y,p) \in T^*M$ with $p \in \partial^-_{PL}f_x(y)$, there is a minimizing horizontal path $\gamma \in \Gamma_x^{\infty,min}$ such that $\gamma(1)=y$ which satisfies the Goh condition. Those results suggest that a fine study of normal containers at infinity $\Gamma_x^{\infty}$ and $\Gamma_x^{\infty,min}$ may help in the understanding of the minimizing Sard conjecture. 

\subsection{Measure contraction properties}

Measure contraction properties consist in comparing the contraction of volumes along geodesics from a given point with what happens in classical model of Riemannian geometry. Unlike other notions of Ricci curvature (bounded from below) on measured metric spaces which are not relevant in sub-Riemannian geometry (see \cite{juillet21}), measure contraction properties have been shown to be satisfied for several types of sub-Riemannian structures (see \cite{juillet09,rifford13,al14,lee16,lcz16,rizzi16,br18,br20}), all of which do not admit strictly abnormal minimizing horizontal paths. The present paper provides new examples of sub-Riemannian structures which may have strictly abnormal minimizing horizontal paths and for which Ohta's definition of measure contraction property makes sense (see \cite{ohta07,rifford13}), it is thus natural to wonder whether they might enjoy measure contraction properties.

\appendix

\section{A second-order condition for local openness at second-order}\label{SECSecondOrder}

We state and prove in this section the result of local openness that we apply in the proof of Lemma \ref{STEP2_LEM3}. For this, we consider a Banach space $(X,\|\cdot\|)$ (whose open ball centered at $u\in X$ of radius $r>0$ will be denoted by $B_X(u,r)$), a positive integer $N$, an open subset $U$ of $X$ and a mapping $F: U \rightarrow \R^N$ which is assumed to be of class $C^2$ on $U$, which means that it satisfies the following properties (the usual Euclidean norm in $\R^N$ is denoted by $|\cdot|$):
\begin{itemize}
\item[(i)] the function $F$ is (Fréchet) differentiable at every $u\in U$, that is, there is a bounded linear operator $D_uF:X \rightarrow \R^N$ such that 
\[
\lim_{h \rightarrow 0} \frac{\left|  F(u+h) - F(u) - D_uF (h)   \right| }{ \| h \|} =0,
\]
\item[(ii)] the mapping $u \mapsto D_uF$ is continuous from $U$ to the set $L(X,\R^N)$  of bounded linear operators from $X$ to $\R^N$ equipped with the operator norm $\|\cdot\|$, 
\item[(iii)] the function $u\in U \mapsto D_uF \in L(X,\R^N)$ is of class $C^1$ as a function from $(X,\|\cdot \|)$ to $(L(X,\R^N),\|\cdot\|)$ (note that (i)-(ii) above can be adapted to functions valued in a Banach space instead of $\R^N$) which means that 
\[
\lim_{h \rightarrow 0} \frac{\left|  F(u+h) - F(u) - D_uF (h) - \frac{1}{2} D_u^2 F (h)  \right| }{ \| h \|^2} =0 \qquad \forall u \in U,
\]
where for every $u\in U$, $D_u^2F:X \rightarrow \R^N$ stands for the quadratic form defined by the (symmetric) bilinear form $(h,k) \mapsto D_u^2F(h,k)$ given by the derivative of $u\mapsto D_uF(h)$ in $u$ (along $k$)  and where the mapping $u\in U\mapsto D_u^2F\in L^2(X,\R^n)$ is continuous. 
\end{itemize}

We refer for example the reader to the monograph \cite{hamilton82} for further detail on differential calculus in infinite dimensions. 

By the Inverse Function Theorem, $F$ is locally open \guillemotleft{} at first order \guillemotright{} at any point where $F$ is a submersion, that is, where $D_uF$ is surjective. The second-order theory developed by Agrachev-Sachkov \cite{as04} and Agrachev-Lee \cite{al09} allows to give sufficient conditions for a local openness property \guillemotleft{} at second-order \guillemotright{} as we now show.  Given a critical point $u \in U$, that is, a point where $D_uF:X \rightarrow \R^N$ is not surjective, we define the co-rank of $u$ by
\[
\mbox{corank}_F (u) := N - \mbox{dim} \left(  \mbox{Im} \bigl( D_uF  \bigr)\right) \in [1,N]
\]
and we recall that the negative index of a quadratic form  $Q: X \rightarrow \R$ (that is $Q$ is defined by $Q(v):=B(v,v)$ with $B:X \times X \rightarrow \R$ a symmetric bilinear form) is defined by
\[
\mbox{ind}_- (Q) := \max \Bigl\{ \mbox{dim} (L) \ \vert \ Q_{\vert L \setminus \{0\}} <0 \Bigr\},
\]
where $Q_{\vert L \setminus \{0\}} <0$ means 
\[
Q(u) <0 \qquad \forall u \in L \setminus \{0\}.
\]
The following result provides a refinement of \cite[Theorem B.3  p.128]{riffordbook} which was itself obtained as an application of the second-order theory developped in Agrachev-Sachkov \cite[Chapter 20]{as04} and Agrachev-Lee \cite[Section 5]{al09} (see also \cite[Chapter 12]{abb17}) (given a vector space $V\subset \R^N$, $V^{\perp}$ stands for the set of linear forms on $\R^N$ which annihilate $V$):

\begin{theorem}\label{THMopenquant}
Let $F: U \rightarrow \R^N$ be a mapping of class $C^2$ on an open set $U \subset X$, $\bar{u} \in U$ be a critical point of $F$ of co-rank $r$ and let $G :U \rightarrow \R^d$, with  $d\in \N^*$, be a mapping of class $C^1$ on $U$. If there holds
\begin{eqnarray}\label{ind}
\mbox{\rm ind}_- \left( \lambda \cdot \left( D^2_{\bar{u}} F \right)_{\vert \mbox{\rm Ker} (D_{\bar{u}} F)}  \right) \geq N +d \qquad \forall \lambda \in \left( \mbox{\rm Im} \bigl( D_{\bar{u}} F \bigr)\right)^{\perp} \setminus \{0\},
\end{eqnarray}
then there exist $(\delta, \rho) \in (0,1)$ and $K>0$  such that the following property holds: For every $u \in U$, $x\in \R^N$ with
\[
\| u - \bar{u}\| < \delta \quad  \mbox{and} \quad |x-F(u)|<\rho,
\]
there are $w_1, w_2 \in X$ such that $u+w_1+w_2 \in U$,
\[
x=F(u+w_1+w_2),
\]
\[
w_1 \in \mbox{\rm Ker}\left(D_uF\right) \cap \mbox{\rm Ker}\left(D_uG\right)
\]
and
\[
 \|w_1\|<K \sqrt{|x-F(u)|}, \quad \|w_2\|<K |x-F(u)|.
\]
\end{theorem}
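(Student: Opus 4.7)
The plan is to deduce Theorem~\ref{THMopenquant} from the classical second-order local openness theorem of Agrachev-Sachkov and Agrachev-Lee (see {\it e.g.} \cite[Theorem B.3 p. 128]{riffordbook}) by absorbing the extra $G$-constraint into the working kernel subspace. First I would fix the decomposition $\R^N = V \oplus W$ with $V := \mbox{Im}(D_{\bar{u}} F)$ and $W := V^{\perp}$, and for $u$ close to $\bar{u}$ introduce the closed subspace
\[
Y_u := \mbox{Ker}(D_u F) \cap \mbox{Ker}(D_u G).
\]
Since $\mbox{Ker}(D_u G)$ has codimension at most $d$ in $X$, restricting a quadratic form from $\mbox{Ker}(D_u F)$ to $Y_u$ can lower its negative index by at most $d$ (a subspace of negativity of dimension $k$ still meets a codimension-$d$ subspace in dimension at least $k-d$). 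The hypothesis (\ref{ind}) therefore yields
\[
\mbox{ind}_-\left( \lambda \cdot (D^2_{\bar{u}} F)_{|Y_{\bar{u}}}\right) \geq N \qquad \forall \lambda \in \left( \mbox{Im}(D_{\bar{u}} F) \right)^{\perp} \setminus \{0\},
\]
and by continuity of $u \mapsto D_u^2 F$ together with compactness of the unit sphere of $W^*$, the same lower bound persists uniformly for every $u$ in a small neighborhood of $\bar{u}$.

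Next I would rerun the proof of the Agrachev-Sachkov theorem, but using the subspace $Y_u$ in place of the full kernel $\mbox{Ker}(D_u F)$. The key ingredient of that proof is the topological fact that if a quadratic map $Q : Y \to W$ satisfies $\mbox{ind}_-(\lambda \cdot Q) \geq \dim W$ for every nonzero $\lambda \in W^*$, then $Q$ is locally $1/2$-open at $0$; this fact (essentially a Brouwer-type / minimax argument) transfers unchanged to $Y_u$. Combined with a standard Taylor expansion and a fixed-point argument, this upgrades to a corresponding $1/2$-openness of $\pi_W \circ F$ along $Y_u$: there exist $\rho_1, K_1 > 0$ such that for every $u$ near $\bar{u}$ and every target $z \in W$ with $|z| < \rho_1$, one can choose $w_1 \in Y_u$ with $\|w_1\| \leq K_1 \sqrt{|z|}$ and $\pi_W \cdot F(u + w_1) = \pi_W \cdot F(u) + z$.

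The first-order step would then be routine: applying the Inverse Function Theorem along a complement of $Y_u$ produces, after choosing $w_1$ as above with $z = \pi_W \cdot (x - F(u))$, a correction $w_2 \in X$ with $\|w_2\| \leq K_3\, |x - F(u)|$ satisfying $F(u + w_1 + w_2) = x$. Combined with $\|w_1\| \leq K_1 \sqrt{|x - F(u)|}$ and $w_1 \in Y_u = \mbox{Ker}(D_u F) \cap \mbox{Ker}(D_u G)$, this delivers all the requirements of the theorem.

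The hardest part will be tracking the uniformity of the constants as $u$ varies in a neighborhood of $\bar{u}$. The Agrachev-Sachkov proof is traditionally formulated at a fixed base point, and one must verify that both the topological surjectivity argument for the quadratic map and the fixed-point argument lifting quadratic openness to nonlinear openness depend continuously on the parameter $u$. The relevant compactness comes from the unit sphere of $W^*$ (a finite-dimensional sphere of dimension $r - 1$) and from the upper semicontinuity of negative indices of quadratic forms on parametrized families of closed subspaces; both ingredients should extend the classical argument without essential modification.
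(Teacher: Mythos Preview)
Your outline correctly identifies the mechanism---pass to the subspace $Y_u=\mbox{Ker}(D_uF)\cap\mbox{Ker}(D_uG)$, use the codimension-$d$ drop to retain enough negative index, and invoke second-order openness---and this is close in spirit to the paper's route. However, two steps are genuine gaps.

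The serious one is the uniformity in $u$. You claim the index bound on $Y_u$ persists for $u$ near $\bar u$ by ``continuity of $u\mapsto D_u^2F$'' and ``upper semicontinuity of negative indices on parametrized families of closed subspaces''. But the family $u\mapsto Y_u$ is \emph{not} continuous: the ranks of $D_uF$ and $D_uG$ can jump, so $Y_u$ may change dimension and direction abruptly. A negative subspace $L\subset Y_{\bar u}$ need not lie in, nor project nicely onto, $Y_u$ for nearby $u$, and no semicontinuity principle applies to a quadratic form restricted to a discontinuously varying domain. The paper resolves this by first reducing to a fixed \emph{finite-dimensional} subspace $W\subset X$ (Lemma~\ref{11jan1}) on which $\tilde F=F_{|W_{\bar u}}$ satisfies an index bound $\geq r$ on \emph{every} subspace $Z\subset\mbox{Ker}(D_{\bar u}\tilde F)$ of dimension at least $D+r-N-d$; uniformity then comes from compactness of the Grassmannian of such $Z$ (Lemma~\ref{13janvsoir} and the paragraph after it), and for each $u$ one takes $Z'_u=Y_u\cap W$ and compares it to its projection $Z_u$ onto $\mbox{Ker}(D_{\bar u}\tilde F)$. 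This finite-dimensional reduction plus Grassmannian compactness is the missing idea in your plan.

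The second gap is the sequential ``first $w_1$, then $w_2$ by IFT'' scheme. After $w_1$ fixes $\pi_W\circ F$, you propose to obtain $w_2$ with $F(u+w_1+w_2)=x$ via the Inverse Function Theorem; but $D_{u+w_1}F$ need not be surjective (indeed $D_{\bar u}F$ has corank $r>0$), so a single IFT step cannot solve the full equation, and the $w_2$-correction in general disturbs the $W$-component you just fixed. The paper avoids this by treating the linear and quadratic corrections simultaneously: it rescales via $\Phi^\epsilon_{Z,Z',u}(v,z)=\epsilon^{-2}\bigl(F(u+\epsilon^2 v+\epsilon\pi_{Z'}(z))-F(u)\bigr)$, shows this is uniformly close to the model map $H_Z(v,z)=D_{\bar u}\tilde F(v)+\tfrac12 D^2_{\bar u}\tilde F(z)$, and applies a Brouwer fixed-point argument to $H_Z$ that is stable under $C^0$-perturbation (Lemma~\ref{LEM13jan}). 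The desired split $w_1=\epsilon\pi_{Z'}(z)\in Y_u$, $w_2=\epsilon^2 v$ then falls out automatically with the correct size estimates.
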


\begin{proof}[Proof of Theorem \ref{THMopenquant}]
Let $F: U \rightarrow \R^N$, $\bar{u} \in U$ and $G:U \rightarrow \R^d$ as in the statement be fixed such that (\ref{ind}) is satisfied. The following result will allow us to work on spaces of finite dimension, it is a consequence of (\ref{ind}).

\begin{lemma}\label{11jan1}
There are a vector space $W\subset X$ of dimension $D$ and a vector space $V\subset W$ of dimension $N-r$ such that the restriction $\tilde{F}:W_{\bar{u}} \rightarrow \R^N$ of $F$ to $W_{\bar{u}} := \{\bar{u}\} + W$ satisfies the following properties: There holds
\begin{eqnarray}\label{13janv1}
W = V \oplus \mbox{\rm Ker}\bigl(D_{\bar{u}}\tilde{F}\bigr), \quad \mbox{\rm Im} \bigl( D_{\bar{u}} \tilde{F} \bigr)=\mbox{\rm Im} \bigl( D_{\bar{u}} \tilde{F}_{\vert V} \bigr)=\mbox{\rm Im} \bigl( D_{\bar{u}} F \bigr)
\end{eqnarray}
and for every vector space $Z\subset  \mbox{\rm Ker}(D_{\bar{u}}\tilde{F})$ of dimension $\geq D+r-N-d$, 
\begin{eqnarray}\label{EQ11jan1}
\mbox{\rm ind}_- \left( \lambda \cdot \bigl( D^2_{\bar{u}} \tilde{F} \bigr)_{\vert Z}  \right) \geq r \qquad \forall \lambda \in \left( \mbox{\rm Im} \bigl( D_{\bar{u}} \tilde{F} \bigr)\right)^{\perp} \setminus \{0\}.
\end{eqnarray}
\end{lemma}

\begin{proof}[Proof of Lemma \ref{11jan1}]
Consider the $(N-1)$-dimensional sphere $S \subset (\R^N)^*$ defined by
\[
S:= \left\{ \lambda \in \left( \mbox{Im} \bigl( D_{\bar{u}} F  \bigr)\right)^{\perp} \ \vert \ | \lambda|=1 \right\} \subset \left(\R^N\right)^*.
\] 
By (\ref{ind}), for every $\lambda \in S$, there is a subspace $E_{\lambda}\subset \mbox{Ker} \left( D_{\bar{u}}F\right)$ of dimension $N+d$ such that 
\[
\lambda \cdot \bigl( D^2_{\bar{u}} F\bigr)_{\vert E_{\lambda} \setminus \{0\}} <0
\]
and moreover by continuity of the mapping $\nu \mapsto \nu \cdot \left( D^2_{u} F\right)_{\vert E_{\lambda}}$, there is indeed an open set $\mathcal{O}_{\lambda}\subset S$ containing $\lambda$ such that 
\[
\nu \cdot \bigl( D^2_{u} F \bigr)_{\vert E_{\lambda} \setminus \{0\}} <0 \qquad \forall \nu \in \mathcal{O}_{\lambda}.
\]
Therefore, by compactness of $S$ there are finitely many open sets $\mathcal{O}_{\lambda_1}, \ldots, \mathcal{O}_{\lambda_I}$ in $S$ such that
\[
S = \bigcup_{i=1}^I \mathcal{O}_{\lambda_i}.
\]
Pick now a finite dimensional space $V\subset X$ of dimension $N-r$ such that (note that $V\cap \mbox{Ker}(D_{\bar{u}}F)=\{0\}$)
$$
\mbox{Im} \left( D_{\bar{u}}F_{\vert V}\right) = \mbox{Im} \bigl( D_{\bar{u}} F\bigr)
$$
and define the finite dimensional vector space $W\subset X$, say of dimension $D$, by 
\[
 W := V \oplus 
 \left(\sum_{i=1}^I E_{\lambda_i} \right). 
\]
By construction, the restriction $\tilde{F}:W_{\bar{u}} \rightarrow \R^N$ of $F$ to $W_{\bar{u}} := \{\bar{u}\} + W$ satisfies (\ref{13janv1}) and 
\begin{eqnarray}\label{13jan1}
\mbox{ind}_- \left( \lambda \cdot \bigl( D^2_{\bar{u}} \tilde{F} \bigr)_{\vert \mbox{Ker} (D_{\bar{u}} \tilde{F})}  \right) \geq N+d \qquad \forall \lambda \in \left( \mbox{Im} \bigl( D_{\bar{u}} \tilde{F} \bigr)\right)^{\perp} \setminus \{0\}.
\end{eqnarray}
Furthermore, if $Z\subset \mbox{Ker} (D_{\bar{u}} \tilde{F})$ is a vector space of dimension $\geq D+r-N-d$ and $\lambda$ belongs to $( \mbox{Im} ( D_{\bar{u}} \tilde{F}))^{\perp} \setminus \{0\}$ then thanks to (\ref{13jan1}) there is a vector space $E\subset \mbox{Ker} (D_{\bar{u}} \tilde{F})$ of dimension $N+d$ such that 
\[
\lambda \cdot \bigl( D^2_{\bar{u}} \tilde{F} \bigr)_{\vert E \setminus \{0\}} <0
\]
and in addition we have 
\begin{eqnarray*}
\dim (Z\cap E) & = & \dim (Z) + \dim(E) -  \dim (Z+E) \\
& \geq &  (D+r-N-d) + (N+d) - D=r.
\end{eqnarray*}
This proves (\ref{EQ11jan1}).
\end{proof}

For every vector space $Z\subset  \mbox{Ker} (D_{\bar{u}} \tilde{F})$ of dimension $\geq D+r-N-d$, we define the vector space 
\[
X_Z :=  V \oplus Z  \subset X
\]
and the mapping $H_Z :  X_Z \longrightarrow \R^N$
by
\[
H_Z(v,z) := D_{\bar{u}}\tilde{F} (v) + \frac{1}{2}     \bigl( D^2_{\bar{u}} \tilde{F} \bigr) (z)        \qquad \forall (v,z) \in  V \times Z \simeq X_Z,
\]
and we set
\[
\mathcal{B}_Z (a) := \Bigl\{ (v,z) \in X_Z \, \vert \, \|v+z\| < a\Bigr\} \qquad \forall a>0. 
\]
The proof of the following lemma is moreorless the same as the proof of \cite[Lemma B.7 p. 135]{riffordbook}, we give it for the sake of completeness. 

\begin{lemma}\label{LEM13jan}
For every vector space $Z\subset  \mbox{\rm Ker} (D_{\bar{u}} \tilde{F})$ of dimension $\geq D+r-N-d$, there are $\mu_Z, c_Z>0$ such that  the image of any continuous mapping $H : \mathcal{B}_Z(1) \rightarrow \R^N$ with
\begin{eqnarray}\label{COND23july}
\sup \Bigl\{ \left| H(v,z)-H_Z(v,z) \right| \, \vert \, (v,z) \in \mathcal{B}_Z(1) \Bigr\} \leq \mu_Z
\end{eqnarray}
contains the ball $\bar{B}(0,c_Z)\subset \R^N$. 
\end{lemma}

\begin{proof}[Proof of Lemma \ref{LEM13jan}]
Let  $Z\subset  \mbox{Ker} (D_{\bar{u}} \tilde{F})$ a vector space of dimension $\geq D+r-N-d$ be fixed. Denote by $\mathcal{K}$ the orthogonal complement of $ \mbox{Im} ( D_{\bar{u}} \tilde{F})=\mbox{Im} ( D_{0} H_Z)$ in $\R^N$ which is a vector subspace of $\R^N$ of dimension $r$  and define the quadratic mapping $Q_Z: Z \rightarrow \mathcal{K}$ by 
\[
Q_Z(z) :=  \mbox{Proj}^{\perp}_{\mathcal{K}} \left[ \bigl( D^2_{\bar{u}} \tilde{F} \bigr)(z) \right] \qquad \forall z \in Z,
\]
where $\mbox{Proj}^{\perp}_{\mathcal{K}} : \R^N \rightarrow \mathcal{K}$ stands for the orthogonal projection to $\mathcal{K}$. By (\ref{13janv1})-(\ref{EQ11jan1}), we have 
\[ 
\mbox{\rm ind}_- \left( \Lambda^* \cdot Q_Z  \right) \geq r \qquad \forall \Lambda \in \mathcal{K} \setminus \{0\}.
\]
Hence by \cite[Lemma 20.8 p. 301]{as04} or \cite[Lemma B.6 p. 130]{riffordbook}, $Q_Z$ admits a regular zero $\bar{z}\in Z$. Thus, the point $\bar{z}\in  X_Z$ satisfies 
\[
\tilde{z} \in \mbox{Ker}(D_0H_Z) = Z,
\]
\[
 D_0^2H_Z \left(\bar{z},\bar{z}\right) \in  \mbox{Im} ( D_{0} H_Z) \quad (\mbox{because } Q_Z(\bar{z})=0 \Leftrightarrow D^2_{\bar{u}}\tilde{F} (\bar{z})\in \mathcal{K}^{\perp}=\mbox{Im} ( D_{0} H_Z) )
\]
and the linear mapping 
\[
 (v,z) \in \mbox{Ker}(D_0H_Z) \, \longmapsto \,  \mbox{Proj}^{\perp}_{\mathcal{K}}\left[ \bigl( D^2_{0} H_Z) \bigl(\bar{z},(v,z) \bigr)\right] \in \mathcal{K} =\left( \mbox{Im} \bigl( D_{0} H_Z\bigr)\right)^{\perp}
\]
is surjective, so by \cite[Lemma B.5 p. 129]{riffordbook} we infer that  there is a sequence $\{(v_i,z_i)\}_i$ in $X_Z$ converging to $0$ (w.r.t. $\|\cdot\|$) such that $H_Z(v_i,z_i)=0$ and $D_{(v_i,z_i)}H_Z$ is surjective for all $i$. Let $i$ be large enough such that $u_i:=(v_i,z_i)$ belongs to $\mathcal{B}_Z(1/4)$. Since $D_{u_i}H_Z$ is surjective, there is a affine space $Y \subset X_Z$ of dimension $N$ containing $u_i$ such that $D_{u_i}(H_Z)_{\vert Y}$ is invertible. So, by the Inverse Function Theorem, there is an open ball $\mathcal{B}:=B_X(u_i,\rho)\cap Y \subset \mathcal{B}_Z(1)$ centered at $u_i$ in $Y$  
such that the mapping
\[
(H_Z)_{\vert Y} \, : \, \mathcal{B}  \, \longrightarrow \,  (H_Z)_{\vert Y} (\mathcal{B}) \subset \R^N
\]
is a diffeomorphism. Denoting by $\mathcal{H} : (H_Z)_{\vert Y} (\mathcal{B}) \rightarrow \mathcal{B}$ its inverse, we pick some $c_Z>0$ such that 
\[
\bar{B}(0,c_Z) \subset (H_Z)_{\vert Y} (\mathcal{B}) \quad \mbox{and} \quad   
\mathcal{H}\left(\bar{B}(0,c_Z)\right) \subset B_X\bigl( u_i,\rho/4\bigr),
\]
and moreover we consider some $\mu_Z>0$ small enough such that any continuous mapping $H : \mathcal{B}_Z(1) \rightarrow \R^N$ verifying (\ref{COND23july}) satisfies
\[
H(u) \in (H_Z)_{\vert Y} (\mathcal{B}) \qquad \forall u \in B_X(u_i,\rho/2) \cap Y
\]
and
\[
\left|( \mathcal{H} \circ H)(u) -u \right| \leq \frac{\rho}{4} \qquad  \forall u\in B_X(u_i,\rho/2) \cap Y.
\]
We claim that by construction the image of any continuous mapping $H : \mathcal{B}_Z(1) \rightarrow \R^N$ verifying (\ref{COND23july}) contains the ball $\bar{B}(0,c_Z)$. As a matter of fact, for every $x\in \bar{B}(0,c_Z)$, the above construction implies that the function 
\[
\Psi : \bar{B}_X(\mathcal{H}(x),\rho/4) \cap Y \, \longrightarrow \, \bar{B}_X(\mathcal{H}(x),\rho/4)\cap Y
\]
defined by 
\[
\Psi (u) := u -( \mathcal{H} \circ H) (u) + \mathcal{H}(x) \qquad \forall u \in \bar{B}_X(\mathcal{H}(x),\rho/4) \cap Y,
\]
is continuous from the $N$-dimensional ball $\bar{B}_X(\mathcal{H}(x),\rho/4)\cap Y$ into itself. Therefore, by Brouwer's Theorem, $\Psi$ has a fixed point, that is, there is $u \in \bar{B}_X(\mathcal{H}(x),\rho/4)\cap Y$ such that 
\[
\Psi(u) = u \quad \Longleftrightarrow \quad \mathcal{H}\bigl(H(u)\bigr) = \mathcal{H}(x) \quad \Longleftrightarrow \quad H(u)=x,
\]
which concludes the proof of the lemma.
\end{proof}

Let $\bar{\epsilon}>0$ be such that $B_X(\bar{u},10\bar{\epsilon})\subset U$. For every vector space $Z\subset  \mbox{Ker} (D_{\bar{u}} \tilde{F})$ of dimension $e \in [D+r-N-d,D+r-N]$ (note that $ \mbox{Ker} (D_{\bar{u}} \tilde{F})$ has dimension $D+r-N$) and every vector space $Z'\subset W$ of dimension $e$, we denote by $D_H(Z,Z')$ the Hausdorff distance between $Z$ and $Z'$ over the unit ball, that is, (we set $B_X^1:=B_X(0,1)$)
\[
D_H(Z,Z') := \max \left\{ \sup_{u\in Z\cap B_X^1} \inf _{u\in Z\cap B_X^1} \left\{ \|u-u'\|\right\},  \sup_{u'\in Z'\cap B_X^1} \inf _{u\in Z\cap B_X^1} \left\{ \|u-u'\| \right\}\right\}
\]
and we denote by $\pi_{Z'} : W \rightarrow Z'$ the orthogonal projection to $Z'$ with respect to a fixed Euclidean metric in $W\simeq \R^D$. We note that, since norms in finite dimension are equivalent, there is $K>0$ which does not depend upon $Z, Z'$ such that there holds
\begin{eqnarray}\label{14janvroma}
\left\|\pi_{Z'} (z) -z \right\| \leq K d_H(Z,Z') \, \|z\| \qquad \forall z \in Z.
\end{eqnarray}
Then, for every $\epsilon \in (0,\bar{\epsilon})$ and every $u\in U$ with $\|u-\bar{u}\|<\bar{\epsilon}$, we define the function $\Phi_{Z,Z',u}^{\epsilon}:X_Z \rightarrow \R^N$ of class $C^2$ by
\[
\Phi_{Z,Z',u}^{\epsilon} (v,z) :=\frac{1}{\epsilon^2} \Bigl( F \bigl(u + \epsilon^2 v +\epsilon \pi_{Z'}(z) \bigr)-F(u) \Bigr)  \qquad \forall (v,z) \in X_Z.
\]
The following lemma follows from Taylor's formula at second-order (iii) above and Lemma \ref{LEM13jan}:

\begin{lemma}\label{13janvsoir}
For every vector space $Z\subset  \mbox{\rm Ker} (D_{\bar{u}} \tilde{F})$ of dimension $e\in [D+r-N-d,D+r-N]$, there is $\epsilon_Z>0$ such that  for every vector space $Z'\subset W$ of dimension $e$ satisfying 
\[
D_H(Z,Z') <\epsilon_Z,
\]
and every $u\in U$ with $\|u-\bar{u}\|<\epsilon_Z$, we have 
\[
\bar{B}(0,c_Z)\subset \Phi_{Z,Z',u}^{\epsilon} \left( \mathcal{B}_Z(1)\right) \qquad \forall \epsilon \in (0,\epsilon_Z).
\]
\end{lemma}

\begin{proof}[Proof of Lemma \ref{13janvsoir}]
Let  $Z\subset  \mbox{Ker} (D_{\bar{u}} \tilde{F})$ a vector space of dimension $e\in [D+r-N-d,D+r-N]$ be fixed. We claim that
\[
 \lim_{u \rightarrow \bar{u}, Z' \rightarrow Z, \epsilon\rightarrow 0} \, \sup \Bigl\{ \left| \Phi_{Z,Z',u}^{\epsilon} (v,z) -H_Z(v,z) \right| \, \vert \, (v,z) \in \mathcal{B}_Z(1) \Bigr\} =0.
\]
As a matter of fact, for every vector space $Z'\subset W$ of dimension $e$ and every $\epsilon \in (0,\bar{\epsilon})$, we have for every $(v,z)\in \mathcal{B}_Z(1)$,
\begin{multline*}
 \Phi_{Z,Z',u}^{\epsilon} (v,z) -H_Z(v,z) = \left( D_uF (v) +\frac{1}{2} D_u^2F \bigl(  \pi_{Z'}(z)\bigr)\right)\\ -\left( D_{\bar{u}}\tilde{F} (v) + \frac{1}{2}     \bigl( D^2_{\bar{u}} \tilde{F} \bigr) (z) \right)
 + \Theta^{\epsilon}_{Z,Z',u}(v,z),
\end{multline*}
with 
\begin{multline*}
 \Theta^{\epsilon}_{Z,Z',u}(v,z) = \frac{1}{\epsilon^2} \left( F \bigl(u + \epsilon^2 v +\epsilon \pi_{Z'}(z) \bigr)-F(u) -  D_uF \bigl(\epsilon^2 v\bigr) \right)- \frac{1}{2} D^2_uF\bigl( \pi_{Z'}(z) \bigr).
\end{multline*}
By $C^1$ regularity of $F$, we have 
\[
\lim_{u\rightarrow \bar{u}} \, \sup \Bigl\{ \left| D_uF(v) - D_{\bar{u}}\tilde{F}(v) \right| \, \vert \, (v,z) \in \mathcal{B}_Z(1) \Bigr\} =0
\]
and by $C^2$ regularity of $F$ along with (\ref{14janvroma}) we have  
\[
\lim_{u\rightarrow \bar{u}, Z' \rightarrow Z} \, \sup \left\{ \left| D_u^2F \bigl(  \pi_{Z'}(z)\bigr) - \bigl( D^2_{\bar{u}} \tilde{F} \bigr) (z)  \right| \, \vert \, (v,z) \in \mathcal{B}_Z(1) \right\} =0.
\]
Moreover, we can write for every  $(v,z)\in \mathcal{B}_Z(1)$,
\begin{eqnarray*}
&\quad & F \bigl(u + \epsilon^2 v +\epsilon \pi_{Z'}(z) \bigr)-F(u) -  D_uF \bigl(\epsilon^2 v\bigr)  \\
 &= &F \bigl(u + \epsilon^2 v +\epsilon \pi_{Z'}(z) \bigr)-F(u) -  D_uF \bigl(\epsilon^2 v+\epsilon \pi_{Z'}(z)\bigr)\\
& = & \frac{1}{2} \int_0^1 D^2_{tu+(1-t)(\epsilon^2 v+\epsilon \pi_{Z'}(z))} F \bigl(\epsilon^2 v+\epsilon \pi_{Z'}(z)\bigr) \, dt\\
& = & \frac{\epsilon^2}{2} \int_0^1 D^2_{tu+(1-t)(\epsilon^2 v+\epsilon \pi_{Z'}(z))} F \bigl(\epsilon v+ \pi_{Z'}(z)\bigr) \, dt,
\end{eqnarray*}
so by $C^2$ regularity of $F$ and (\ref{14janvroma}) we also infer that
\[
 \lim_{u \rightarrow \bar{u}, Z' \rightarrow Z, \epsilon\rightarrow 0} \, \sup \Bigl\{ \left|  \Theta^{\epsilon}_{Z,Z',u}(v,z) \right| \, \vert \, (v,z) \in \mathcal{B}_Z(1) \Bigr\} =0.
\]
Consequently, the claim is proved and Lemma \ref{LEM13jan} completes the proof. 
\end{proof}

We are ready to conclude the proof of Theorem \ref{THMopenquant}. First, we observe that for every integer $e\in [D+r-N-d,D+r-N]$, the set of vector spaces $Z \subset  \mbox{Ker} (D_{\bar{u}} \tilde{F})$ of dimension $e$ is compact with respect to the metric $D_H$. Hence, by Lemma \ref{13janvsoir}, we infer that there are $\hat{c}, \hat{\epsilon}\in (0,1)$ such that for every vector space $Z\subset  \mbox{\rm Ker} (D_{\bar{u}} \tilde{F})$ of dimension $e\geq [D+r-N-d,D+r-N]$, for every vector space $Z'\subset W$ of dimension $e$ with $D_H(Z,Z')< \hat{\epsilon}$ and for any $u\in U$ with $\|u-\bar{u}\|<\hat{\epsilon}$, there holds 
\begin{eqnarray}\label{flightEthiopian}
\bar{B}(0,\hat{c})\subset \Phi_{Z,Z',u}^{\epsilon} \left( \mathcal{B}_Z(1)\right) \qquad \forall \epsilon \in (0,\hat{\epsilon}).
\end{eqnarray}
Second, we note that for every $u \in U$ close enough to $\bar{u}$, the two vector spaces $Z^{\prime}_u, Z_u\subset W$ defined by ($\mbox{Proj}^{\perp}_{E}$ stands for the orthogonal projection to a vector space $E\subset W$ with respect to the fixed Euclidean metric in $W\simeq \R^D$)
\[
Z^{\prime}_{u}:=\mbox{Ker} \left( D_uF\right)  \cap \mbox{Ker} \left( D_uG\right) \cap W \quad \mbox{and} \quad Z_{u}:= \mbox{Proj}^{\perp}_{\mbox{Ker}(D_{\bar{u}}\tilde{F})} \bigl( Z^{\prime}_u\bigr)
\]
have the same dimension in $ [D+r-N-d,D+r-N]$. Thus, there is $\delta \in (0,1)$ such that (\ref{flightEthiopian}) is satisfied for any $u\in U$ with $\|u-\bar{u}\|<\delta$, $Z=Z_{u}$ and $Z'=Z^{\prime}_{u}$; furthermore we note that we can assume also that $\|\pi_{Z'}(z)\|<C\|z\|$ for some $C\geq 1$. Let us now show that the conclusions of Theorem \ref{THMopenquant} are satisfied with $K:=C/\sqrt{\hat{c}}$ and $\rho:=\hat{\epsilon}^2\hat{c}$. Given $u\in U$ with $\|u-\bar{u}\|<\delta$, if  $x\in \R^N$ verifies $|x-F(u)|< \rho$ then we have  
\[
 \epsilon:= \sqrt{ \frac{|x-F(u)|}{\hat{c}}} < \hat{\epsilon} \quad \mbox{and} \quad \frac{1}{\epsilon^2} \bigl(x-F(u)\bigr) < \bar{c}.
\]
So, by (\ref{flightEthiopian}) (with  $Z=Z_{u}$ and $Z'=Z^{\prime}_{u}$), there are $(v,z)\in \mathcal{B}_Z(1)$ such that $ \Phi_{Z,Z',u}^{\epsilon} (v,z)=(x-F(u))/\epsilon^2$ which yields 
\[
x=F(u+w_1+w_2) \quad \mbox{with} \quad w_1:=\epsilon \pi_{Z'}(z) \mbox{ and } w_2:= \epsilon^2 v,
\]
where 
\[
w_1 \in \mbox{Ker}\left(D_uF\right) \cap \mbox{Ker}\left(D_uG\right) 
\]
\[
\mbox{and} \quad \|w_1\|<K \sqrt{|x-F(u)|}, \quad \|w_2\|<K |x-F(u)|
\]
are satisfied by construction. 
\end{proof}

%\begin{remark}
%Theorem \ref{THMopenquant} maybe holds true under the weaker assumption 
%\[
%\mbox{\rm ind}_- \left( \lambda \cdot \left( D^2_{\bar{u}} F \right)_{\vert \mbox{\rm Ker} (D_{\bar{u}} F)}  \right) \geq N +d \qquad \forall \lambda \in \left( \mbox{\rm Im} \bigl( D_{\bar{u}} F \bigr)\right)^{\perp} \setminus \{0\},
%\]
%but we don't need it for the proof of Lemma \ref{ProofTHMLEM6}.
%\end{remark}


\begin{thebibliography}{1}

% used
\bibitem{agrachev01} 
A. Agrachev.
\newblock  Compactness for Sub-Riemannian length-minimizers and subanalyticity.
\newblock Control theory and its applications (Grado, 1998), {\em Rend. Sem. Mat. Univ. Politec. Torino}, 56(4):1--12, 2001.

% used
\bibitem{agrachev09} 
A. Agrachev.
\newblock  Any sub-Riemannian metric has points of smoothness.
\newblock {\em Dokl. Akad. Nauk} 424 (2009), no. 3, 295-298; translation in {\em Dokl. Math.} 79 (2009), no. 1, 45-47.

% used
\bibitem{agrachev14} 
A. Agrachev.
\newblock  {\em Some open problems}.
\newblock Geometric control theory and sub-riemannian geometry, 1--13, Springer INdAM Ser. {\bf 5} (2014).

% used
\bibitem{abb17}
A. Agrachev, A. Barilari and U. Boscain.
\newblock {\em A comprehensive introduction to sub-Riemannian geometry. From the Hamiltonian viewpoint. With an appendix by Igor Zelenko.}
\newblock Cambridge Studies in Advanced Mathematics, 181. Cambridge University Press, Cambridge, 2020.

% used
\bibitem{ag01}
A.~Agrachev and J.-P.~Gauthier.
\newblock On subanalyticity of Carnot-Carathéodory distances.
\newblock  {\em Ann. Inst. H. Poincaré Non Linéaire}, 18(3):359--382, 2001.

% used
\bibitem{al09}
A.~Agrachev and P.~Lee.
\newblock Optimal transportation under nonholonomic constraints.
\newblock  {\em Trans. Amer. Math. Soc.}, 361(11):6019--6047, 2009.

\bibitem{al14}
A.~Agrachev and P.~Lee.
\newblock Generalized Ricci curvature bounds for three dimensional contact subriemannian manifolds.
\newblock  {\em Math. Ann.}, 360(1-2):209--253, 2014.

\bibitem{as04}
A.~Agrachev and A.~Sachkov.
\newblock {\em Control theory from the geometric viewpoint}.
\newblock Encyclopaedia of Mathematical Sciences, 87. Springer-Verlag, Berlin, 2004.

% used
\bibitem{as96} 
A.~Agrachev and A.V.~Sarychev.
\newblock Abnormal sub-Riemannian geodesics: Morse index and rigidity.
\newblock {\em Ann. Inst. H. Poincaré}, 13:635--690, 1996.

% used
\bibitem{as99} 
A.~Agrachev and A.V.~Sarychev.
\newblock Sub-Riemannian metrics: minimality of abnormal geodesics versus subanalyticity.
\newblock {\em ESAIM, Control Optim. Calc. Var.}, 4:377--403, 1999.

\bibitem{br20} 
Z. Badreddine and L. Rifford.  
\newblock Measure contraction properties for two-step analytic sub-Riemannian structures and Lipschitz Carnot groups. 
\newblock {\em Ann. Inst. Fourier (Grenoble)}, 70 (2020), no. 6, 2303--2330.

\bibitem{br18}
D.~Barilari and L.~Rizzi.
\newblock Sharp measure contraction property for generalized $H$-type Carnot groups.
\newblock {\em Commun. Contemp. Math.}, 20 (2018), no. 6, 1750081, 24 pp.

%\bibitem{bcjps20} 
%D. Barilari, Y. Chitour, F. Jean, D. Prandi and M. Sigalotti.
%\newblock On the regularity of abnormal minimizers for rank $2$ sub-Riemannian structures. 
%\newblock \textit{J. Math. Pures Appl. (9)}, 133 (2020), 118--138.

%\bibitem{bm01}
%S.M. Bates and C.G. Moreira.
%\newblock De nouvelles perspectives sur le théorème de Morse-Sard.
%\newblock {\em C. R. Acad. Sci. Paris Sér. I Math.}, 332 (2001), no. 1, 13--17.

%\bibitem{BaumBott} 
%P. Baum and R. Bott.
%\newblock {Singularities of holomorphic foliations}.
%\newblock {\it J. Differential Geometry}, 7 (1972), 279--342.

\bibitem{bellaiche96}
A.~Bella\"iche.
\newblock The tangent space in sub-Riemannian geometry.
\newblock {\em Sub-Riemannian geometry}, 1--78, Progr. Math., 144, Birkhäuser, Basel, 1996.

%\bibitem{HP} 
%A. Belotto da Silva, E. Bierstone, V. Grandjean and P. Milman. 
%\newblock Resolution of singularities of the cotangent sheaf of a singular variety.
%\newblock Advances in Mathematics, 307, (2017), 780--832 pp.

\bibitem{br18} 
A. Belotto da Silva and L. Rifford.
\newblock The sub-Riemannian Sard conjecture on Martinet surfaces.
\newblock {\em Duke Math. J.} 167 (2018), no. 8, 1433--1471.

\bibitem{bfpr18} 
A. Belotto da Silva, A. Figalli, A. Parusi\'nski and L. Rifford.
\newblock Strong Sard Conjecture and regularity of singular minimizing geodesics for analytic sub-Riemannian structures in dimension {$3$}.
\newblock {\em Inventiones Math.} 229 (2022), no. 1, 395--448.

\bibitem{bpr22} 
A. Belotto da Silva, A. Parusi\'nski and L. Rifford.
\newblock Abnormal subanalytic distributions and minimal rank Sard Conjecture.
\newblock Preprint, arXiv:2208.01392, [math.DG], (2022).

%\bibitem{bm88} 
%E. Bierstone and P.D. Milman, 
%\newblock Semianalytic and subanalytic sets.
%\newblock {\em Publications math\'{e}matiques de l’I.H.\'{E}.S.}, tome 67 (1988), p. 5--42.

% used
\bibitem{bnv22} 
F. Boarotto, L. Nalon and D. Vittone.
\newblock The Sard problem in step 2 and in filiform Carnot groups. 
\newblock Preprint, arXiv:2203.16360, [math.DG], (2022).

% used
\bibitem{bv20} 
F. Boarotto and D. Vittone.
\newblock A dynamical approach to the Sard problem in Carnot groups.
\newblock {\em J. Differential Equations}, 269 (2020), no. 6, 4998--5033.

% used
\bibitem{cjt06}
Y. Chitour, F. Jean and E. Trélat.
\newblock Genericity results for singular curves.
\newblock  {\em J.\ Diff.\ Geom.}, 73 (2006), no. 1, 45--73.

% used
\bibitem{clarke13}
F. Clarke.
\newblock {\em Functional analysis, calculus of variations and optimal control}.
\newblock Graduate Studies in Mathematics, 164. Springer-Verlag, London, 1013.

% used
\bibitem{clsw98}
F.~H. Clarke, Yu.~S. Ledyaev, R.~.J. Stern and P.~.R. Wolenski.
\newblock {\em Nonsmooth analysis and control theory}.
\newblock Graduate Studies in Mathematics, 178. Springer-Verlag, New York, 1998.

\bibitem{cr06}
M.-O.~Czarnecki and L.~Rifford.
\newblock Approximation and regularization of Lipschitz functions: convergence of the gradients.
\newblock  {\em Trans.\ Amer.\ Math.\ Soc.}, 358 (2006), no. 10, 4467--4520.

% used
\bibitem{eg15} 
L.C.~Evans and R.F.~Gariepy.
\newblock {\em Measure theory and fine properties of functions.}
\newblock Textbooks in Mathematics. CRC Press, Boca Raton, 2015. 

% used
\bibitem{federer69} 
H. Federer.
\newblock {\em Geometric measure theory.}
\newblock Die Grundlehren des mathematischen Wissenschaften, Band 153. Springer-Verlag, New York 1969. 

% used
\bibitem{fr10}
A.~Figalli and L.~Rifford.
\newblock Mass Transportation on sub-Riemannian Manifolds.
\newblock {\em Geom. Funct. Anal.}, 20(1):124--159, 2010.

%\bibitem{gg73}
%M. Golubitsky and V. Guillemin. 
%\newblock {\em Stable mappings and their singularities}. 
%\newblock Graduate Texts in Mathematics, Vol. 14. Springer-Verlag, New York-Heidelberg, 1973.

\bibitem{gs11}
P. G\'ora and R.J. Stern.
\newblock Subdifferential analysis of the Van der Waerden function.
\newblock  {\em J. Convex Anal.}, 18 (2011), no. 3, 699--705.

%\bibitem{gm88}
%M. Goresky and R. MacPherson.
%\newblock {\em Stratified Morse theory}. 
%\newblock Ergebnisse der Mathematik und ihrer Grenzgebiete (3) [Results in Mathematics and Related Areas (3)], 14, Springer-Verlag, Berlin, 1988.

%\bibitem{gromov96}
%M. Gromov.
%\newblock Carnot-Carathéodory spaces seen from within.
%\newblock {\em Sub-Riemannian geometry}, 79--323, Progr. Math., 144, Birkhäuser, Basel, 1996.

\bibitem{hamilton82}
R.~S. Hamilton.
\newblock The inverse function theorem of Nash and Moser.
\newblock  {\em Bull.\ Amer.\ Math.\ Soc. (N.S.)}, 7 (1982), no. 1, 65--222.

\bibitem{hanson34} % used
E.~H. Hanson.
\newblock A new proof of a theorem of Denjoy, Young, and Saks.
\newblock {\it Bull. Amer. Math. Soc.}, 40 (1934), no. 10, 691--694. 

%\bibitem{hardt82}
%R. M. Hardt.
%\newblock Some analytic bounds for subanalytic sets. 
%\newblock Differential geometric control theory (Houghton, Mich., 1982), 259--267, Progr. Math., 27, Birkhäuser Boston, Boston, MA, 1983. 

%\bibitem{hironaka73} 
%H. Hironaka.
%\newblock {\em Subanalytic sets, Number theory}. 
%\newblock In honor of Y. Akizuki, Kinokuniya, Tokyo, 1973.

%\bibitem{Hirsch74} 
%M. Hirsch. 
%\newblock {\em A stable analytic foliation with only exceptional minimal sets}.
%\newblock Dynamical Systems, Warwick, 1974, Lect. Notes in Math. vol. 468, Springer-Verlag, 1975, 9-10.

%\bibitem{hj85} 
%R. Horn and C.~R. Johnson.
%\newblock {\em Matrix analysis}.
%\newblock Third edition. North-Holland Mathematical Library, 7. North-Holland Publishing Co., Amsterdam, 1990.

%\bibitem{hsu92}
%L. Hsu.
%\newblock Calculus of variations via the Griffiths formalism.
%\newblock  {\em J.\ Diff.\ Geom.}, 36 (1992), no. 3, 551--589.

\bibitem{juillet09}
N.~Juillet.
\newblock Geometric inequalities and generalized Ricci bounds in the Heisenberg group.
\newblock {\em Int. Math. Res. Not. IMRN}, 13:2347--2373, 2009. 

\bibitem{juillet21}
N.~Juillet.
\newblock Geometric inequalities and generalized Ricci bounds in the Heisenberg group.
\newblock {\em Rev. Math. Iberoam.}, 37 (2021), no. 1, 177-188. 

%\bibitem{jurdjevic97} 
%V. Jurdjevic.
%\newblock Geometric Control Theory. 
%\newblock Cambridge Studies in Advanced Mathematics, vol. 52, Cambridge University Press, Cambridge, 1997.

%\bibitem{kollarbook} J. Koll\'{a}r.
%\newblock {\em Lectures on resolution of singularities}. 
%\newblock Annals of Mathematics Studies, 166. Princeton University Press, Princeton, NJ, 2007.

%\bibitem{km15} C. Kottke and R. Melrose, 
%\newblock \textit{Generalized blow-up of corners and fiber products}. 
%\newblock Trans. Amer. Math. Soc. 367 (1): 651--705, 2015.

%\bibitem{ls95} W. Liu and H.J. Sussmann, 
%\newblock \textit{Shortest paths for sub-Riemannian metrics on rank-$2$ distributions}. 
%\newblock Mem. Amer. Math. Soc. 118, 564, 1995.

%\bibitem{kalzaid96} 
%Sh. Kaliman, Sh. M. Za\v \i denberg, M.
%\newblock A tranversality theorem for holomorphic mappings and stability of Eisenman-Kobayashi measures. 
%\newblock {\em Trans. Amer. Math. Soc.}, 348 (1996), no. 2, 661--672. 

%\bibitem{llmv13} 
%E. Le Donne, G.P. Leonardi, R. Monti and D. Vittone.
%\newblock Extremal curves in nilpotent Lie groups.
%\newblock {\em Geom. Funct. Anal.}, 23 (2013), no. 4, 1371--1401.

%\bibitem{llmv15} 
%E. Le Donne, G.P. Leonardi, R. Monti and D. Vittone.
%\newblock Corners in non-regular sub-Riemannian manifolds.
%\newblock {\em ESAIM Control Optim. Calc. Var.}, 21 (2015), no. 3, 625--634.

%\bibitem{llmv18} 
%E. Le Donne, G.P. Leonardi, R. Monti and D. Vittone.
%\newblock Extremal polynomials in stratified groups.
%\newblock {\em Comm. Anal. Geom.}, 26 (2018), no. 4, 723--757.

\bibitem{lmopv16} 
E. Le Donne, R. Montgomery, A. Ottazzi,  P. Pansu and D. Vittone.
\newblock Sard property for the endpoint map on some Carnot groups.
\newblock {\em Ann. Inst. H. Poincaré Anal. Non Linéaire}, 33 (2016), no. 6, 1639--1666.

\bibitem{lee16}
P.W.Y.~Lee.
\newblock On measure contraction property without Ricci curvature lower bound. 
\newblock {\em Potential Anal.}, 44:(1):27--41, 2016.

\bibitem{lcz16}
P.W.Y.~Lee, C.~Li and I.~Zelenko.
\newblock Ricci curvature type lower bounds for sub-Riemannian structures on Sasakian manifolds. 
\newblock {\em Discrete Contin. Dyn. Syst.}, 36:(1):303--321, 2016.

%\bibitem{lee03}
%J.M. Lee.
%\newblock {\em Introduction to smooth manifolds}.
%\newblock Graduate Texts in Mathematics, 218. Springer-Verlag, New York, 2003.

%\bibitem{lm08} 
%G.P. Leonardi and R. Monti.
%\newblock  End-point equations and regularity of sub-Riemannian geodesics.
%\newblock {\em Geom. Funct. Anal.}, 18 (2008), no. 2, 552--582. 

%\bibitem{lm87}
%P. Libermann and C.-M. Marle.
%\newblock {\em Symplectic geometry and analytical mechanics.}.
%\newblock Mathematics and its Applications, 35. D. Reidel Publishing Co., Dordrecht 1987.

%\bibitem{lr98} 
%J.-M. Lion,  J.-P. Rolin,  
%\newblock Intégration des fonctions sous-analytiques et volumes des sous-ensembles sous-analytiques. 
%\newblock {\em Ann. Inst. Fourier} (Grenoble) 48 (1998), no. 3, 755--767. 

%\bibitem{ls95}
%W. Liu and H.J. Sussmann.
%\newblock Shortest paths for sub-Riemannian metrics of rank two distributions.
%\newblock {\em Mem. Amer. Math. Soc.}, 118 (1995), no. 564, x+104 pp.

%\bibitem{lojasiewicz65} 
%S. {\L}ojasiewicz.
%\newblock {\em Ensembles semi-analytiques}. 
%\newblock IHES, 1965.

%\bibitem{mcdd98}
%D. McDuff and D. Salamon.
%\newblock {\em Introduction to symplectic topology}.
%\newblock Oxford Mathematical Monographs. The Clarendon Press, Oxford University Press, New York, 1998.

%\bibitem{montgomery94a} 
%R. Montgomery.
%\newblock Abnormal minimizers.
%\newblock {\em SIAM J. Control Optim.}, 32 (1994), no. 6, 1605--1620.

%\bibitem{montgomery94b} 
%R. Montgomery.
%\newblock Singular extremals on Lie groups.
%\newblock {\em Math. Control Signals Systems}, 7 (1994), no. 3, 217--234.

%\bibitem{montgomery95} 
%R. Montgomery.
%\newblock A survey of singular curves in sub-Riemannian geometry.
%\newblock {\em J. Dynam. Control Systems}, 1(1995), no. 1, 49--90.

%\bibitem{montgomery96} 
%R. Montgomery.
%\newblock A survey of singular curves geodesics.
%\newblock In {\em Sub-Riemannian Geometry}, 325--339, Prog. Math., 144, Birkh\"auser, Basel, 1996.

% used
\bibitem{montgomery02}
R. Montgomery.
\newblock {\em A tour of sub-Riemannian geometries, their geodesics and applications}.
\newblock Mathematical Surveys and Monographs, Vol.\ 91. American Mathematical Society, Providence, RI, 2002.

%\bibitem{monti14a}
%R. Monti.
%\newblock Regularity results for sub-Riemannian geodesics.
%\newblock {\it Calc. Var. Partial Differential Equations}, 49 (2014), no. 1-2, 549--582.

%\bibitem{mpv18a}
%R. Monti, A. Pigati and D. Vittone.
%\newblock Existence of tangent lines to Carnot-Carathéodory geodesics.
%\newblock {\it Calc. Var. Partial Differential Equations}, 57 (2018), no. 3, Paper No. 75, 18 pp.

%\bibitem{mpv18b}
%R. Monti, A. Pigati and D. Vittone.
%\newblock On tangent cones to length minimizers in Carnot-Carathéodory spaces.
%\newblock {\it SIAM J. Control Optim.} 56 (2018), no. 5, 3351--3369.

% used
\bibitem{ohta07}
S. Ohta.
\newblock On the measure contraction property of metric measure spaces.
\newblock {\it Comment. Math. Helv.}, 82(4):805--828, 2007.

% used
\bibitem{ov19} 
A. Ottazzi and D. Vittone.
\newblock On the codimension of the abnormal set in step two Carnot groups.
\newblock {\em ESAIM Control Optim. Calc. Var.} 25 (2019), Paper No. 18, 17 pp.

\bibitem{rifford13}
L. Rifford.
\newblock Ricci curvature in Carnot groups.
\newblock {\em Math. Control Relat. Fields}, 3 (2013), no. 4, 467--487.

\bibitem{riffordbook} 
L. Rifford.
\newblock {\em Sub-Riemannian Geometry and Optimal Transport}. 
\newblock Springer Briefs in Mathematics, Springer, New York, 2014.

% used
\bibitem{riffordbourbaki} 
L. Rifford.
\newblock {\em Singulières minimisantes en géométrie sous-Riemannienne}. 
\newblock Séminaire Bourbaki. Vol 2015/2016. Exposés 1104--1119. {\em Astérisque}, No. 390 (2017), Exp. No. 1113, 277--301.

% used
\bibitem{rt05}
L. Rifford and E. Trélat.
\newblock Morse-Sard type results in sub-Riemannian geometry.
\newblock {\em Math. Ann.}, 332 (2005), no. 1, 145--159.

\bibitem{rizzi16}
L.~Rizzi.
\newblock Measure contraction properties of Carnot groups.
\newblock {\em Calc. Var. Partial Differential Equations}, 55(3), Art. 60, 20pp., 2016.

% used
\bibitem{rw98}
R.~T. Rockafellar and R.~J.-B. Wets.
\newblock {\em Variational analysis}.
\newblock Grundlehren der mathematischen Wissenschaften [Fundamental Principles of Mathematical Sciences], 317. Springer-Verlag, Berlin, 1998.

% used
\bibitem{sakai96}
T. Sakai. 
\newblock {\it Riemannian geometry}. 
\newblock Translations of Mathematical Monographs, 149. American Mathematical Society, Providence, RI, 1996.

% used
\bibitem{subbotin95}
A.~I. Subbotin. 
\newblock {\it Generalized Solutions of First-Order PDEs}. 
\newblock Birkhäuser, Boston, 1995.


%\bibitem{s}
%P. Speissegger.
%\newblock The Pfaffian closure of an o-minimal structure.
%\newblock {\it J. Reine Angew. Math.}, 508 (1999), 189--211.

%\bibitem{sussmann96}
%H.~J. Sussmann.
%\newblock A cornucopia of four-dimensional abnormal sub-Riemannian minimizers.
%\newblock In {\em Sub-Riemannian Geometry}, Birkh\"auser, 341--364, 1996.

%\bibitem{sussmann08}
%H.J. Sussmann.
%\newblock Smooth distributions are globally finitely spanned.
%\newblock In {Analysis and design of nonlinear control systems}, Springer, Berlin, 3--8, 2008.

%\bibitem{sussmann14}
%H.J. Sussmann.
%\newblock A regularity theorem for minimizers of real-analytic subriemannian metrics.
%\newblock In: 2014 IEEE 53rd Annual Conference on Decision and Control (CDC), pp. 4801--4806, 2014.

% used
\bibitem{trelat00}
E. Trélat.
\newblock Some properties of the value function and its level sets for affine control systems with quadratic cost.
\newblock {\em J. Dynam. Control Systems}, 6 (2000), no. 4, 511--541.

%\bibitem{varadarajan84} 
%V.S. Varadarajan.
%\newblock {\em Lie groups, Lie algebras, and their representations}. 
%\newblock Graduate Texts in Mathematics, 102. Springer-Verlag, New York, 1984.

%\bibitem{Winitzki} 
%S. Winitzki.
%\newblock {\em Linear Algebra via Exterior Products}.
%\newblock Published by lulu.com, ISBN 978-1-4092-9496-2, (2010).

%\bibitem{w05}
%J. W\l{l}odarczyk, 
%\newblock Resolution of singularities of analytic spaces. 
%\newblock Proceedings of G\"{o}kova Geometry-Topology Conference 2008, 31?63, G\"{o}kova Geometry/Topology Conference (GGT), G\"{o}kova, 2009.

%\bibitem{zz95} 
%I. Zelenko and M. Zhitomirskii.
%\newblock Rigid paths of generic {$2$}-distributions on {$3$}-manifolds.
%\newblock {\em Duke Math. J.}, 79 (1995), no. 2, 281--307.

\end{thebibliography}
\end{document}